\documentclass[1p]{elsarticle}
\usepackage[utf8]{inputenc}
\usepackage[T1]{fontenc}
\usepackage[american]{babel}
\usepackage{amsmath,amssymb,amsthm,mathtools,bm,array,booktabs,enumitem}
\IfFileExists{lmodern.sty}{\usepackage{lmodern}}{}
\IfFileExists{microtype.sty}{\usepackage{microtype}}{}
\usepackage[hidelinks]{hyperref}
\hypersetup{
 pdftitle={Forward--Backward McKean--Vlasov Systems with Jumps and a Monotone Source with a Control-Shifted Interface},
 pdfauthor={Kayembe Tshiswaka Tcheick; Mabela Matendo Rostin; Kabengele Mpunga Yannick; Bosonga Bofeki Jean-Pierre},
 pdfkeywords={McKean--Vlasov, BSDEs with jumps, maximal monotone operators, Yosida approximation, occupation time, control-shifted interface}}
\allowdisplaybreaks
\numberwithin{equation}{section}
\theoremstyle{plain}
\newtheorem{theorem}{Theorem}[section]
\newtheorem{lemma}[theorem]{Lemma}
\newtheorem{proposition}[theorem]{Proposition}
\newtheorem{corollary}[theorem]{Corollary}
\theoremstyle{definition}
\newtheorem{definition}[theorem]{Definition}
\newtheorem{hypothesis}[theorem]{Assumption}
\newtheorem{remark}[theorem]{Remark}

\newcommand{\R}{\mathbb R}
\newcommand{\E}{\mathbb E}

\newcommand{\Pcal}{\mathcal P}
\newcommand{\Lcal}{\mathcal L}
\newcommand{\F}{\mathbb F}
\newcommand{\Wtwo}{W_2}
\newcommand{\one}{\mathbf 1}

\keywordtitle{Keywords}
\makeatletter
\def\ps@pprintTitle{\let\@oddhead\@empty\let\@evenhead\@empty\let\@oddfoot\@empty\let\@evenfoot\@empty}
\makeatother

\begin{document}
\raggedbottom
\begin{frontmatter}

\title{Forward--Backward McKean--Vlasov Systems with Jumps and a Monotone Source with a Control-Shifted Interface: Well-Posedness, Stability, and Yosida Rates}

\author[aff1]{Kayembe Tshiswaka Tcheick\corref{cor1}}
\ead{tcheick.kayembe@unikin.ac.cd}
\author[aff1]{Mabela Matendo Rostin}
\author[aff1]{Kabengele Mpunga Yannick}
\author[aff2]{Bosonga Bofeki Jean-Pierre}

\cortext[cor1]{Corresponding author.}
\address[aff1]{University of Kinshasa, Faculty of Science and Technology, Department of Mathematics, Statistics and Computer Science, Kinshasa, Democratic Republic of the Congo}
\address[aff2]{University of Kinshasa, Faculty of Economics and Management, Department of Mathematical Economics, Kinshasa, Democratic Republic of the Congo}

\begin{abstract}
We study a forward--backward McKean--Vlasov system with jumps and one-way coupling: the forward equation is independent of the backward variables, whereas the BSDE depends on the forward state and its law. A source generated by a maximal monotone operator is evaluated along the forward state and then accumulated in the backward equation. The operator has the form $A(x,\mu,a)=\partial\phi(x+h(a))+c(\mu)$, so the control shifts the discontinuity interface of its minimal-norm selection. We establish modelwise well-posedness and strong stability, with estimates uniform over the model family. We then study the Yosida approximation: strong convergence, Minty-type strong--weak closedness when the state, its law, and the control---hence the moving interface---vary, and uniform convergence under an integrability condition. For an asymmetric loss-aversion prototype, a quantitative occupation estimate for a layer around the interface yields an explicit Yosida convergence rate. When the continuous martingale part is nondegenerate and the interface moves absolutely continuously, the occupation time is linear in the layer width. The same rate controls the backward components and the cumulative source.
\end{abstract}

\begin{keyword}
McKean--Vlasov \sep BSDEs with jumps \sep maximal monotone operators \sep Yosida approximation \sep occupation time \sep control-shifted interface
\MSC[2020] 60H10 \sep 60G55 \sep 60J75 \sep 47H05 \sep 49J40
\end{keyword}

\end{frontmatter}

\section*{Introduction}
\addcontentsline{toc}{section}{Introduction}

Fix a finite time horizon $[0,T]$ and let $\Pcal$ be a nonempty family of probabilistic models. Under each $P\in\Pcal$, $B$ denotes a continuous local martingale and $\widetilde N^P$ a compensated Poisson random measure on the mark space $E$; the precise canonical framework is given in Section~1. The initial state is denoted by $\xi$, the predictable control by $\alpha$, and $b$, $\sigma$, and $\beta$ denote the drift, diffusion, and jump coefficients, respectively. We consider a forward--backward McKean--Vlasov system with one-way coupling: the forward equation is independent of the backward variables, while the BSDE depends on the forward state and its law. For each model $P\in\Pcal$, the forward state $X^P$ satisfies
\begin{align}
X_t^P={}&\xi+\int_0^t b(s,X_{s-}^P,\mu_{s-}^P,\alpha_s)\,ds
+\int_0^t\sigma(s,X_{s-}^P,\mu_{s-}^P,\alpha_s)\,dB_s \nonumber\\
&+\int_0^t\int_E\beta(s,X_{s-}^P,\mu_{s-}^P,\alpha_s,e)\,
\widetilde N^P(ds,de),\qquad t\in[0,T],\quad P\text{-a.s.}\label{eq:intro-forward}
\end{align}
where $\mu_t^P=\mathcal L^P(X_t^P)$ is the law of the forward state under $P$. Once $X^P$ has been constructed, it enters the backward equation. Here $g$ is the terminal function, $f$ is the backward generator, $(Y^P,Z^P,U^P)$ are the backward unknowns, and $\eta_t^P:=\mathcal L^P(Y_t^P)$ denotes the law of $Y_t^P$. The BSDE with jumps is
\begin{align}
Y_t^P={}&g(X_T^P,\mu_T^P)
+\int_t^T f(s,X_{s-}^P,\mu_{s-}^P,Y_s^P,Z_s^P,U_s^P,
\eta_s^P,\alpha_s)\,ds \nonumber\\
&+\mathsf K_T^P-\mathsf K_t^P
-\int_t^T Z_s^P\,dB_s
-\int_t^T\int_EU_s^P(e)\,\widetilde N^P(ds,de),\qquad t\in[0,T],\quad P\text{-a.s.}\label{eq:intro-backward}
\end{align}
This one-way coupling gives the system a triangular structure: $(Y^P,Z^P,U^P)$ does not enter the forward equation.

The cumulative source is generated by a maximal monotone operator depending on the state, its law, and the control,
\[
A(x,\mu,a)=\partial\phi(x+h(a))+c(\mu),
\]
where $\phi:\R^d\to\R$ is convex, $D$ is the control set, $h:D\to\R^d$ shifts the interface, and $c:\mathcal P_2(\R^d)\to\R^d$ introduces the dependence on the law; here $\mathcal P_2(\R^d)$ is the Wasserstein space of probability measures with finite second moment. If $A^0$ denotes the minimal-norm selection and $\varrho\in\R^d$ is a fixed vector, we set
\[
\mathsf K_t^P=\int_0^t\langle\varrho,
A^0(X_{s-}^P,\mu_{s-}^P,\alpha_s)\rangle ds,
\qquad t\in[0,T],\quad P\text{-a.s.}
\]
In the asymmetric loss-aversion (ALA) prototype, $A^0$ changes branch across the moving interface
\[
\Sigma(a)=\bigcup_{k=1}^d\{x\in\R^d:x_k=-h_k(a)\}.
\]
The singular difficulty comes from the discontinuity of the selection when $X_t^P+h(\alpha_t)$ crosses $\Sigma(\alpha_t)$.

This architecture differs from standard stochastic variational inequalities. The key issue is not merely the presence of a maximal monotone operator, but where it acts: here the operator is evaluated on the forward state and its law, and its minimal-norm selection then drives the BSDE through $\mathsf K$. The singular behavior is therefore governed by the amount of time $X^P+h(\alpha)$ spends near an interface shifted by the control.

McKean--Vlasov equations go back to McKean \cite{McKean1966} and Sznitman \cite{Sznitman1991}; a modern account is given by Carmona and Delarue \cite{CarmonaDelarue2018}. Mean-field BSDEs were introduced in \cite{BuckdahnDjehicheLiPeng2009,BuckdahnLiPeng2009}. For BSDEs with jumps, see Tang--Li \cite{TangLi1994}, Barles--Buckdahn--Pardoux \cite{BarlesBuckdahnPardoux1997}, Royer \cite{Royer2006}, and, in the mean-field setting, Li \cite{Li2018}. McKean--Vlasov equations with jumps are studied in \cite{Erny2022,LiMin2021}.

The closest multivalued works differ from our setting in three specific ways. First, Qiao and Gong \cite{QiaoGong2023} place the maximal monotone operator in a multivalued McKean--Vlasov backward equation. Ning, Wu, and Zheng \cite{NingWuZheng2024} consider SDEs, McKean--Vlasov SVIs, and a forward--backward variational system in which the subdifferentials act directly in the inclusions. Duan and Wu \cite{DuanWu2026} likewise study mean-field forward--backward systems with oblique subgradients, using Moreau--Yosida approximation and compactness. These frameworks address constraints or inclusions on the system variables; they do not cover the forward-driven source considered here, where $A^0(X^P,\mathcal L^P(X^P),\alpha)$ is first evaluated along the forward state and then integrated into the BSDE.

Second, combining jumps with a moving boundary does not by itself produce our mechanism. Elmansouri, Ouknine, and Ouknine \cite{ElmansouriOuknineOuknine2026} study a one-dimensional multivalued BSDE with jumps and a time-dependent family of maximal monotone operators whose domains have moving boundaries. Djehiche, Dumitrescu, and Zeng \cite{DjehicheDumitrescuZeng2025} consider mean-field reflected BSDEs with jumps and a law-dependent obstacle. In both directions, the finite-variation process is tied to the backward variable, a constraint, or a reflection mechanism. Here it instead comes from a monotone selection evaluated on $X^P$, while the interface is shifted endogenously by $h(\alpha_t)$.

Third, although Yosida approximation is a classical tool for constructing and identifying multivalued solutions, obtaining an explicit rate is delicate when the minimal-norm selection is discontinuous across a moving interface. The works above do not provide, in their respective settings, the quantitative chain developed here:
\[
\begin{gathered}
\text{occupation of a control-shifted interface}\\[-1mm]
\Downarrow\\[-1mm]
\text{Yosida error}\\[-1mm]
\Downarrow\\[-1mm]
\text{error in }(Y,Z,U,\mathsf K).
\end{gathered}
\]
Occupation-time estimates are well known in other stochastic problems \cite{Avikainen2009,FournierPrintems2010,TissotDaguette2026,Friesen2026}. Here they are used to control a discontinuous, law-dependent monotone source evaluated along a McKean--Vlasov state with jumps.

Sections~2--3 provide the well-posedness and stability framework; the main contributions concern the singular limit:
\begin{enumerate}[label=(\roman*),leftmargin=2.2em]
\item under each $P\in\Pcal$, we construct the one-way coupled system and obtain uniform estimates whenever the structural constants are uniform;
\item writing $A^\lambda$ for the Yosida approximation of $A$, we prove strong convergence of $A^\lambda(X^P,\mu^P,\alpha)$ to the minimal-norm selection $A^0(X^P,\mu^P,\alpha)$, together with Minty-type strong--weak closedness when the state, its law, and the control---hence the moving interface---vary;
\item a quantitative occupation estimate for a layer around the interface yields an explicit Yosida convergence rate, which is then transferred to $(Y,Z,U,\mathsf K)$;
\item for the ALA prototype, we prove linear occupation when the continuous martingale part is nondegenerate in the normal directions and $h(\alpha)$ is absolutely continuous.
\end{enumerate}
If the occupation of a layer of width $r$ is of order $r^\theta$ and the minimal-norm selection has a moment of order $2+\delta$, then
\begin{align*}
&\sup_{P\in\Pcal}\E^P\int_0^T
\big|A^\lambda(X_{t-}^P,\mu_{t-}^P,\alpha_t)
-A^0(X_{t-}^P,\mu_{t-}^P,\alpha_t)\big|^2dt\\
&\hspace{35mm}\le C\lambda^\gamma,
\qquad
\gamma=\min\left\{2,\frac{\theta\delta}{2+\delta+\theta}\right\}.
\end{align*}
The same exponent controls the error of the regularized backward system.

The scope is deliberately limited. The coupling is one-way rather than fully coupled; all constructions are modelwise, with no quasi-sure aggregator; and the jump compensator is common to all models. The explicit ALA occupation estimate also uses a continuous martingale component that is nondegenerate in the normal directions. Pure-jump dynamics, rougher moving interfaces, genuinely fully coupled systems, and aggregation under nondominated uncertainty require additional arguments and are not covered here. Section~1 introduces the framework and the operator, Section~2 proves well-posedness, Section~3 establishes stability, Section~4 studies the Yosida approximation, Minty closedness, and occupation estimates, and Section~5 concludes.

\section{Brownian--Poisson Framework}

\subsection{Conventions, model family, and function spaces}

Fix $T>0$, $d,m,r_E\ge1$, $E=\R^{r_E}\setminus\{0\}$, and a L\'evy measure $\Pi$ such that
\begin{equation}
\int_E|e|^2\Pi(de)<\infty.
\label{eq:Pi2}
\end{equation}
This second-moment assumption is maintained throughout the paper; our conventions for L\'evy measures and compensated integrals follow \cite{Applebaum2009}.

For $p\ge1$, let $\mathcal P_p(\R^n)$ denote the set of probability measures $\mu$ on $\R^n$ such that
\[
M_p(\mu)^p:=\int_{\R^n}|x|^p\mu(dx)<\infty.
\]
We write $\delta_x$ for the Dirac mass at $x$. The space $L_\Pi^p(E;\R^n)$ is equipped with the norm
\[
\|u\|_{L_\Pi^p}^p=\int_E|u(e)|^p\Pi(de),
\]
and we use the shorthand $\|u\|_\Pi=\|u\|_{L_\Pi^2}$. Matrix norms are Hilbert--Schmidt norms. The letter $C$ denotes a deterministic constant that may change from line to line. Unless stated otherwise, $C$ may depend on $T$, the dimensions, and the common structural constants introduced below, but not on the particular model $P$ or the Yosida parameter $\lambda$. Moment estimates and convergence rates may also depend on uniform moment bounds and on the parameters appearing in the relevant occupation or regularity assumptions. For estimates at fixed $\lambda$, $C_\lambda$ denotes a constant that may depend on $\lambda$. Thus, notation such as $C_\lambda=C(1+\lambda^{-2})$ is allowed; when $\lambda\downarrow0$ is studied, the factor $1+\lambda^{-2}$ remains explicit and the multiplicative constant $C$ stays independent of $\lambda$.

More precisely, we may take
\[
\Omega=\R^d\times C_0([0,T];\R^m)\times\mathcal M,
\]
where $C_0$ is the space of continuous paths starting from zero and $\mathcal M$ is the space of integer-valued measures on $(0,T]\times E$ that are finite on $(0,T]\times\{|e|>1/n\}$ for every $n\ge1$. The $\sigma$-field on $\mathcal M$ is generated by the evaluation maps. The canonical coordinates are $\xi$, $B$, and $N$. The raw canonical filtration is
\[
\mathcal F_t^0=\sigma\!\left(\xi,B_s,N((0,s]\times G):
0\le s\le t,\ G\in\mathcal B(E),\ \operatorname{dist}(G,0)>0\right),
\]
where $\mathcal B(E)$ is the Borel $\sigma$-field of $E$. For each probability measure $P$ in the nonempty family $\Pcal$, let $\mathcal N^P$ be the ideal of all subsets of $P$-null sets in $\mathcal F_T^0$. We complete under $P$ and define
\[
\mathcal F_T^P=\mathcal F_T^0\vee\mathcal N^P,\qquad
\mathcal F_t^P=\bigcap_{t<u\le T}(\mathcal F_u^0\vee\mathcal N^P)
\quad(t<T),\qquad \F^P=(\mathcal F_t^P)_{0\le t\le T}.
\]
The filtration is complete and right-continuous, and $\xi$ is $\mathcal F_0^P$-measurable. Stochastic integrands are predictable with respect to $\F^P$; for the jump integrand $U$, we use $\mathcal P(\F^P)\otimes\mathcal B(E)$, where $\mathcal P(\F^P)$ is the predictable $\sigma$-field. All martingale statements below refer to this augmented filtration, separately under each $P$. We assume that $B$ is a continuous local martingale and that
\begin{align}
\langle B\rangle_t&=\int_0^t a_s^Pds,\qquad
\underline a\le a_s^P\le\overline a,\quad dt\otimes P\text{-a.e.},
\label{eq:qv}\\
\widetilde N^P(dt,de)&:=N(dt,de)-\Pi(de)dt,
\label{eq:compensated}
\end{align}
where $\mathbb S_m^{>0}$ denotes the cone of symmetric positive-definite $m\times m$ matrices, $\underline a,\overline a\in\mathbb S_m^{>0}$ are independent of $P$, and matrix inequalities are understood in the Loewner order. Throughout the paper, $a_t^P$ (and likewise $\underline a,\overline a$) denotes a covariance matrix, whereas the letter $a\in D$, when used as an argument of $b,\sigma,\beta,f,A$, denotes a control value. The control set remains denoted by $D$, while the data sets compared in Section~3 are denoted by $\mathfrak D$ and $\bar{\mathfrak D}$. Under each $P$, $N$ is a Poisson random measure relative to $\F^P$, with compensator exactly $\Pi(de)dt$. Finally, every square-integrable local martingale under $P$ admits the representation
\begin{equation}
M_t=M_0+\int_0^t Z_s\,dB_s+
\int_0^t\int_EU_s(e)\widetilde N^P(ds,de).
\label{eq:PRP}
\end{equation}
This representation property is an additional assumption on the admissible models; it does not follow from the usual augmentation alone. Without it, a BSDE in a more general filtration may require an additional orthogonal martingale. Our results concern the representation \eqref{eq:PRP} without such a component.

The family $\Pcal$ is called dominated if there exists a probability measure $Q$ such that $P\ll Q$ for every $P\in\Pcal$, and nondominated otherwise. Families of martingale laws with different quadratic-variation densities provide standard examples of mutually singular models; see \cite{DenisMartini2006,SonerTouziZhang2012}. Our framework allows this situation but remains strictly modelwise: \eqref{eq:PRP} is imposed under each $P$, and no common quasi-sure aggregator is constructed. Second-order formulations under nondominated families, with or without jumps, follow a different architecture; see \cite{KaziTaniPossamaiZhou2015a,KaziTaniPossamaiZhou2015b}.

The canonical process $B$ is therefore not assumed to be a standard Brownian motion. For each $P$, the matrix $(a_t^P)^{-1/2}$ is well defined $dt\otimes P$-a.e., and
\[
W_t^P:=\int_0^t(a_s^P)^{-1/2}\,dB_s
\]
is a continuous local martingale satisfying $\langle W^P\rangle_t=tI_m$, where $I_m$ is the $m\times m$ identity matrix. L\'evy's characterization theorem shows that $W^P$ is a Brownian motion under $P$; conversely, $B_t=\int_0^t(a_s^P)^{1/2}dW_s^P$. This identification is strictly modelwise.

For $z\in\R^{1\times m}$ and $u:E\to\R$, set
\[
|z|_{a_t^P}^2:=za_t^Pz^\top,\qquad
\|u\|_\Pi^2:=\int_E|u(e)|^2\Pi(de).
\]
Under $P$, for every $n\ge1$, define
\begin{align*}
\mathbb S^2(P;\R^n)
&=\Bigl\{V\text{ adapted c\`adl\`ag with values in }\R^n:
\E^P\sup_{t\le T}|V_t|^2<\infty\Bigr\},\\
\mathbb H_B^2(P)&=\{Z\text{ predictable}:
\E^P\int_0^T|Z_t|_{a_t^P}^2dt<\infty\},\\
\mathbb H_\Pi^2(P)&=\{U\text{ predictable}:
\E^P\int_0^T\|U_t\|_\Pi^2dt<\infty\}.
\end{align*}
We write $\mathbb S^2(P)=\mathbb S^2(P;\R)$ in the scalar case. Thus the forward process $X^P$ naturally belongs to $\mathbb S^2(P;\R^d)$, while the scalar backward process $Y^P$ belongs to $\mathbb S^2(P)$. For $p\ge2$, $\mathbb S^p(P;\R^n)$ is defined analogously, with norm $(\E^P\sup_{t\le T}|V_t|^p)^{1/p}$. The phrase ``uniform over the model family'' means that the supremum of the corresponding norms over $P\in\Pcal$ is finite; it does not mean that the processes admit a common version across models.

A strong solution under $P$ means a solution adapted to the completed filtration generated by $(\xi,B,N)$ and constructed from these given coordinates. All stochastic identities are understood up to indistinguishability under the fixed model.

Families of solutions may be collected in the spaces
\[
\mathfrak S^2(\Pcal)=\{(V^P)_P:\sup_P\|V^P\|_{\mathbb S^2(P)}<\infty\},
\]
with analogous definitions for $\mathfrak H_B^2$ and $\mathfrak H_\Pi^2$. The norm is the supremum of the modelwise norms. These spaces are complete: a Cauchy sequence converges, for each fixed $P$, in the corresponding complete space; the uniform Cauchy bound then passes to the limit modelwise and finally to the supremum. This construction requires neither measurability with respect to $P$ nor common versions of the processes.

\begin{lemma}[Uniform martingale estimates]\label{lem:mart}
Fix $P\in\Pcal$. Let $Z\in\mathbb H_B^2(P)$ and $U\in\mathbb H_\Pi^2(P)$. There exists a constant $C<\infty$, independent of $P$, such that
\begin{align}
\E^P\sup_{t\le T}\left|\int_0^tZ_s\,dB_s\right|^2
&\le C\E^P\int_0^T|Z_s|_{a_s^P}^2ds,\label{eq:BDGB}\\
\E^P\sup_{t\le T}\left|\int_0^t\int_EU_s(e)\widetilde N^P(ds,de)\right|^2
&\le C\E^P\int_0^T\|U_s\|_\Pi^2ds.\label{eq:BDGN}
\end{align}
Moreover, the two stochastic integrals are orthogonal in $L^2(P)$.
\end{lemma}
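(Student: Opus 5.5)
The plan is to apply Doob's $L^2$ maximal inequality to each stochastic integral and then identify its second moment through the Itô isometry. The constant will be $C=4$, which depends on nothing about $P$.

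For the continuous part, put $M_t=\int_0^tZ_s\,dB_s$. Under the fixed model $P$, $B$ is a continuous local martingale and, by \eqref{eq:qv}, $\langle B\rangle_t=\int_0^ta_s^P\,ds$. Therefore
\[
\langle M\rangle_t=\int_0^t Z_sa_s^PZ_s^\top\,ds=\int_0^t|Z_s|_{a_s^P}^2\,ds .
\]
Since $Z\in\mathbb H_B^2(P)$, we have $\E^P\langle M\rangle_T<\infty$, so $M$ is a genuine square-integrable martingale and not only a local one. To see this, localize with stopping times $\tau_n\uparrow T$ and let $n\to\infty$, using Fatou's lemma together with the standard $L^2$ isometry for stopped local martingales. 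This gives $\E^P M_T^2=\E^P\langle M\rangle_T$. Doob's inequality then yields \eqref{eq:BDGB} with $C=4$. Equivalently, one can write $M_t=\int_0^tZ_s(a_s^P)^{1/2}\,dW_s^P$ and use the Brownian isometry with respect to $W^P$; this is the same computation.

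For the jump part, put $L_t=\int_0^t\int_EU_s(e)\,\widetilde N^P(ds,de)$. Under $P$, $N$ is a Poisson random measure whose compensator is exactly $\Pi(de)\,dt$. For predictable $U$ with $\E^P\int_0^T\|U_s\|_\Pi^2\,ds<\infty$, the compensated integral is therefore a square-integrable martingale satisfying
\[
\E^P L_T^2=\E^P\int_0^T\|U_s\|_\Pi^2\,ds .
\]
This is the standard isometry in \cite{Applebaum2009}: first check it for simple predictable integrands, then extend by density. Doob's inequality again gives \eqref{eq:BDGN} with $C=4$. The constant does not depend on $P$ for two reasons: $\Pi$ is common to all models, and the right-hand sides are already written in the model-dependent norms.

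Orthogonality is the step that needs the most care. The aim is to show that $ML$ is a martingale, so that $\E^P[M_TL_T]=M_0L_0=0$. Since $M$ is continuous and $L$ is purely discontinuous (a compensated jump integral with no continuous martingale part), the bracket $[M,L]$ is the sum of $\Delta M\,\Delta L$ over jumps, which is zero. It follows that $\langle M,L\rangle=0$ and that $ML$ is a local martingale. Moreover,
\[
\sup_t|M_tL_t|\le\tfrac12\Bigl(\sup_t M_t^2+\sup_t L_t^2\Bigr),
\]
and the right-hand side is integrable by the two estimates just proved. Hence $ML$ is of class (D), so it is a true martingale and $\E^P[M_TL_T]=0$. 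The same argument applies at every time $t$.
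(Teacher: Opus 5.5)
Your proof is correct and follows the paper's route: Doob's $L^2$ inequality combined with the continuous and compensated-Poisson isometries (constant $C=4$), and orthogonality from $[M,L]=0$ via integration by parts. Your domination of $ML$ by $\tfrac12(\sup_t M_t^2+\sup_t L_t^2)$, giving class (D), spells out the step the paper summarizes as ``square integrability allows us to take expectations.''
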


\begin{proof}
Set
\[
M_t^c:=\int_0^t Z_s\,dB_s,
\qquad
M_t^d:=\int_0^t\int_EU_s(e)\widetilde N^P(ds,de).
\]
For the continuous part, the predictable quadratic variation is
\[
\langle M^c\rangle_t=\int_0^t Z_sa_s^PZ_s^\top ds
=\int_0^t|Z_s|_{a_s^P}^2ds.
\]
Doob's $L^2$ inequality and the isometry for stochastic integrals with respect to a continuous martingale yield
\begin{align*}
\E^P\sup_{t\le T}|M_t^c|^2
&\le4\E^P|M_T^c|^2
=4\E^P\langle M^c\rangle_T\\
&=4\E^P\int_0^T|Z_s|_{a_s^P}^2ds.
\end{align*}
For the purely discontinuous part, the compensated Poisson isometry gives
\[
\E^P|M_T^d|^2
=\E^P\int_0^T\int_E|U_s(e)|^2\Pi(de)ds
=\E^P\int_0^T\|U_s\|_\Pi^2ds.
\]
Another application of Doob's inequality yields
\[
\E^P\sup_{t\le T}|M_t^d|^2
\le4\E^P\int_0^T\|U_s\|_\Pi^2ds.
\]
The constants are independent of $P$: the continuous norm is written directly in terms of the bracket density $a^P$, while the compensator of $N$ is the same $\Pi(de)dt$ for every model.

Finally, $M^c$ is continuous and $M^d$ is purely discontinuous, so their quadratic covariation vanishes identically,
\[
[M^c,M^d]_t=\sum_{s\le t}\Delta M_s^c\Delta M_s^d=0,
\]
since $\Delta M^c=0$. Integration by parts shows that $M^cM^d$ is a local martingale; square integrability allows us to take expectations and obtain
\[
\E^P[M_T^cM_T^d]=0.
\]
Hence the two stochastic integrals are orthogonal in $L^2(P)$.
\end{proof}

\begin{lemma}[$p$th-order martingale inequality]\label{lem:martp}
Fix $P\in\Pcal$ and $p\ge2$. Let $Z$ and $U$ be predictable and satisfy
\[
\E^P\left[\left(\int_0^T|Z_s|_{a_s^P}^2ds\right)^{p/2}
+\left(\int_0^T\|U_s\|_\Pi^2ds\right)^{p/2}
+\int_0^T\|U_s\|_{L_\Pi^p}^pds\right]<\infty,
\]
and set
\[
M_t=\int_0^t Z_s\,dB_s+\int_0^t\int_E U_s(e)\widetilde N^P(ds,de).
\]
Then there exist constants $0<c_p\le C_p<\infty$, independent of $P$, such that
\begin{align}
c_p\E^P\Bigg[&\left(\int_0^T|Z_s|_{a_s^P}^2ds\right)^{p/2}
+\left(\int_0^T\|U_s\|_\Pi^2ds\right)^{p/2}\nonumber\\
&+\int_0^T\|U_s\|_{L_\Pi^p}^pds\Bigg]
\le \E^P\sup_{t\le T}|M_t|^p,\label{eq:martplower}\\
\E^P\sup_{t\le T}|M_t|^p
&\le C_p\E^P\Bigg[
\left(\int_0^T|Z_s|_{a_s^P}^2ds\right)^{p/2}\nonumber\\
&\quad+\left(\int_0^T\|U_s\|_\Pi^2ds\right)^{p/2}
+\int_0^T\|U_s\|_{L_\Pi^p}^pds\Bigg].\label{eq:martpupper}
\end{align}
\end{lemma}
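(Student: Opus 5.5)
The plan is to pass through the Burkholder--Davis--Gundy inequality with respect to the quadratic variation $[M]$, and then compare $[M]$ with the three deterministic-compensator quantities on the right-hand side. By the BDG inequality for càdlàg local martingales, with universal constants depending only on $p$,
\[
c'_p\,\E^P[M]_T^{p/2}\le\E^P\sup_{t\le T}|M_t|^p\le C'_p\,\E^P[M]_T^{p/2}.
\]
As in Lemma~\ref{lem:mart}, the continuous and purely discontinuous parts have vanishing covariation. Hence
\[
[M]_T=\int_0^T|Z_s|_{a_s^P}^2ds+\int_0^T\!\!\int_E|U_s(e)|^2N(ds,de).
\]
For $p\ge2$, $(x+y)^{p/2}$ is comparable to $x^{p/2}+y^{p/2}$ up to the factor $2^{p/2-1}$. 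The problem therefore reduces to the continuous part, which appears exactly in the required form, and to the jump functional $J:=\int_0^T\int_E|U|^2\,dN$.

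For the upper bound I will show
\[
\E^P J^{p/2}\le C\,\E^P\Big[\Big(\int_0^T\|U_s\|_\Pi^2ds\Big)^{p/2}+\int_0^T\|U_s\|_{L^p_\Pi}^pds\Big].
\]
This is the classical Bichteler--Jacod / Kunita-type inequality, proved by induction on dyadic exponents. Write $J=\int|U|^2d\widetilde N^P+\int_0^T\|U_s\|_\Pi^2ds$. For $p/2\le 2$ one uses directly $\E(\int|U|^2dN)^{q}\le\E\int|U|^{2q}dN$ when $q\le1$, and the $L^2$ isometry plus BDG otherwise. In general one applies BDG to the martingale $\int|U|^2d\widetilde N^P$ with exponent $p/2$. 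Its quadratic variation is $\int|U|^4dN$, and one iterates; interpolation $\|U\|_{L^{2^k}_\Pi}\le\|U\|_{L^2_\Pi}^{1-\vartheta}\|U\|_{L^p_\Pi}^{\vartheta}$ together with Young's inequality closes the induction at the two endpoint terms. Since the compensator $\Pi(de)ds$ is the same for every model, all constants depend only on $p$.

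For the lower bound I need $\E^P J^{p/2}$ to dominate both jump terms. First, $J^{p/2}\ge\sum_{s\le T}|\Delta M_s|^p$ because $p/2\ge1$ and $\ell^1\subset\ell^{p/2}$ norms are ordered. Taking expectations with the compensator gives $\E^P\int_0^T\|U_s\|_{L^p_\Pi}^pds\le\E^P J^{p/2}$. Second, I will use the reverse Lenglart / Garsia–Neveu type inequality: for an increasing adapted process $J$ with predictable compensator $\tilde J=\int_0^\cdot\|U_s\|_\Pi^2ds$ and $q=p/2\ge1$, one has $\E\tilde J_T^{q}\le q^{q}\,\E J_T^{q}$. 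This follows from $\E[\tilde J_T-\tilde J_{t}\mid\mathcal F_t]=\E[J_T-J_t\mid\mathcal F_t]$ and the standard identity $\tilde J_T^q\le q\int_0^T(\tilde J_T-\tilde J_{s})^{q-1}d\tilde J_s$ (dual predictable projection, Hölder). Combining the two bounds gives the lower estimate with $c_p$ depending only on $p$.

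The integrability hypothesis guarantees finiteness, so that localization by stopping times and monotone convergence justify all the steps. I expect the main obstacle to be the upper jump estimate for general (non-dyadic) $p$; the induction with interpolation is where care is needed. If that becomes messy, I will simply cite Kunita's inequality, or equivalently Novikov's inequality for compensated Poisson integrals, which gives exactly that bound with constants depending only on $p$. Uniformity in $P$ then follows from the common compensator and from writing the continuous part through $|Z|_{a^P}$.
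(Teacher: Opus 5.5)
Your proof is correct and shares the paper's skeleton: two-sided BDG for $M$ and the splitting $[M]_T=[M^c]_T+[M^d]_T$. The difference is that you treat the jump part at the level of $J_T=[M^d]_T$, with $J_t=\int_0^t\int_E|U_s(e)|^2N(ds,de)$, rather than at the level of $\sup_t|M^d_t|$.

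The paper simply cites the two-sided Bichteler--Jacod isomorphism (Dirksen's Theorem~1.1) for $\E^P\sup_t|M^d_t|^p$. For the upper bound it splits $\sup_t|M_t|^p\le2^{p-1}(\sup_t|M^c_t|^p+\sup_t|M^d_t|^p)$. For the lower bound it needs the round trip $\E^P\sup_t|M^d_t|^p\le C\,\E^P[M^d]_T^{p/2}\le C\,\E^P[M]_T^{p/2}\le C\,\E^P\sup_t|M_t|^p$ before applying the lower half of that isomorphism.

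You instead prove the comparison $\E^PJ_T^{p/2}\asymp\E^P\tilde J_T^{p/2}+\E^P\int_0^T\|U_s\|_{L^p_\Pi}^pds$ directly, with $\tilde J_t=\int_0^t\|U_s\|_\Pi^2ds$. The upper half is Kunita's inequality. Your dyadic induction also works for non-dyadic $p$: every compensator term that appears has exponent $2^j<p$ and is closed by interpolation between $L^2_\Pi$ and $L^p_\Pi$ plus Young, and the last stage uses subadditivity with exponent $p/2^k\le1$. The lower half is elementary. First, $J_T^{p/2}\ge\sum_{s\le T}|\Delta M_s|^p$ gives the $L^p_\Pi$ term after compensation. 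Second, the Garsia--Neveu/Lenglart--L\'epingle--Pratelli bound $\E^P\tilde J_T^{q}\le q^q\E^PJ_T^q$, with $q=p/2$, gives the $L^2_\Pi$ term.

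For that last bound, do not work with the anticipating integrand $(\tilde J_T-\tilde J_s)^{q-1}$. The clean version uses the predictable integrand $\tilde J_s^{q-1}$ and the compensator identity: $\E^P\tilde J_T^q=q\E^P\int_0^T\tilde J_s^{q-1}d\tilde J_s=q\E^P\int_0^T\tilde J_s^{q-1}dJ_s\le q\E^P[\tilde J_T^{q-1}J_T]$. Then apply H\"older and divide by $(\E^P\tilde J_T^q)^{1-1/q}$, which is finite by hypothesis.

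Your route gives a self-contained lower bound with the explicit constant $(p/2)^{p/2}$, using only the compensator identity of $N$ and no round trip through $\sup_t|M^d_t|$. The paper's route is shorter because it outsources both directions to a cited theorem. The genuinely nontrivial ingredient, the upper Kunita/Bichteler--Jacod estimate, is the same in both.
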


\begin{proof}
Write again $M=M^c+M^d$, with $M^c=Z\!\cdot\!B$ and $M^d=U\!\star\!\widetilde N^P$. For $p\ge2$, the two-sided BDG inequality applied to the c\`adl\`ag martingale $M$ gives constants $c_p^{(0)},C_p^{(0)}>0$ such that
\[
c_p^{(0)}\E^P[M]_T^{p/2}
\le \E^P\sup_{t\le T}|M_t|^p
\le C_p^{(0)}\E^P[M]_T^{p/2}.
\]
Since the continuous and purely discontinuous parts are orthogonal,
\[
[M]_T=[M^c]_T+[M^d]_T,
\qquad [M^c]_T=\int_0^T|Z_s|_{a_s^P}^2ds.
\]
For the compensated Poisson integral, the scalar $L^p$ Bichteler--Jacod isomorphism gives
\begin{align*}
\E^P\sup_{t\le T}|M_t^d|^p
\asymp_p
\E^P\left(\int_0^T\|U_s\|_\Pi^2ds\right)^{p/2}
+\E^P\int_0^T\|U_s\|_{L_\Pi^p}^pds,
\end{align*}
see \cite[Theorem~1.1]{Dirksen2014}. For the continuous part,
\[
\E^P\sup_{t\le T}|M_t^c|^p
\asymp_p
\E^P\left(\int_0^T|Z_s|_{a_s^P}^2ds\right)^{p/2}.
\]
For the upper bound \eqref{eq:martpupper}, it is enough to use
\[
\sup_{t\le T}|M_t|^p
\le 2^{p-1}\left(\sup_{t\le T}|M_t^c|^p+
\sup_{t\le T}|M_t^d|^p\right)
\]
together with the preceding estimates.

For the lower bound, we avoid a circular argument. Since
\[
[M]_T=[M^c]_T+[M^d]_T,
\]
we have separately $[M^c]_T\le[M]_T$ and $[M^d]_T\le[M]_T$. The two-sided BDG inequality for $M$ therefore yields
\[
\E^P[M^c]_T^{p/2}+\E^P[M^d]_T^{p/2}
\le C_p\,\E^P[M]_T^{p/2}
\le C_p\,\E^P\sup_{t\le T}|M_t|^p.
\]
For the continuous part,
\[
[M^c]_T=\int_0^T|Z_s|_{a_s^P}^2\,ds,
\]
which gives the first term in \eqref{eq:martplower}. For the discontinuous part, the Bichteler--Jacod isomorphism applied to $M^d$ gives
\[
\E^P\left(\int_0^T\|U_s\|_\Pi^2ds\right)^{p/2}
+\E^P\int_0^T\|U_s\|_{L_\Pi^p}^pds
\le C_p\,\E^P\sup_{t\le T}|M_t^d|^p.
\]
Finally, BDG and $[M^d]_T\le[M]_T$ imply
\[
\E^P\sup_{t\le T}|M_t^d|^p
\le C_p\E^P[M^d]_T^{p/2}
\le C_p\E^P[M]_T^{p/2}
\le C_p\E^P\sup_{t\le T}|M_t|^p.
\]
Combining these bounds yields \eqref{eq:martplower}.

The constants do not depend on $P$. Indeed, the continuous part is measured in the $|Z|_{a^P}$ norm; the bounds $\underline a\le a^P\le\overline a$ provide common equivalence constants whenever the Euclidean norm is used. The Poisson part uses the same compensator $\Pi(de)dt$ for every model. The inequalities are first established for simple integrands. If $(Z^n,U^n)$ approximates $(Z,U)$ in the norms on the right-hand side, the corresponding stochastic integrals converge in $L^p(\Omega;\mathbb S^p)$ by the same estimates, which yields the desired extension by completion.
\end{proof}

\subsection{Wasserstein distance and the law-dependent operator}

For $\mu,\nu\in\mathcal P_2(\R^n)$, let $\operatorname{Cpl}(\mu,\nu)$ denote the set of probability measures on $\R^n\times\R^n$ with marginals $\mu$ and $\nu$. We use the distance
\[
W_2(\mu,\nu)^2=\inf_{\pi\in\operatorname{Cpl}(\mu,\nu)}
\int|x-y|^2\pi(dx,dy).
\]

For $\mu\in\mathcal P_2(\R^q)$, write $M_2(\mu)^2=\int|x|^2\mu(dx)$. If $V,V'\in L^2(P)$, then
\begin{equation}
\Wtwo^2(\Lcal^P(V),\Lcal^P(V'))\le\E^P|V-V'|^2.
\label{eq:coupling}
\end{equation}

\begin{lemma}[Law flows and measurable representatives]\label{lem:lawmeas}
Fix $P\in\Pcal$. Let $V,V'\in L^2(dt\otimes P;\R^n)$ be jointly measurable. Then there exist Borel flows $m^V,m^{V'}:[0,T]\to\mathcal P_2(\R^n)$ such that $m_t^V=\Lcal^P(V_t)$ and $m_t^{V'}=\Lcal^P(V_t')$ for a.e. $t$. Their equivalence classes do not depend on the representatives of $V,V'$, and
\[
\int_0^T W_2^2(m_t^V,m_t^{V'})dt
\le\E^P\int_0^T|V_t-V_t'|^2dt.
\]
\end{lemma}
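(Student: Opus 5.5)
The plan is to build the flows from the joint law of $(t,V_t)$ via disintegration, and then obtain the inequality pointwise in $t$ from \eqref{eq:coupling}.

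\emph{Construction of the flow.} Consider the finite measure $\nu^V$ on $[0,T]\times\R^n$ defined as the image of $dt\otimes P$ under $(t,\omega)\mapsto(t,V_t(\omega))$. Joint measurability of $V$ makes this map measurable. The first marginal of $\nu^V$ is Lebesgue measure on $[0,T]$. By the disintegration theorem on the Polish space $\R^n$, there is a Borel kernel $t\mapsto m_t^V\in\mathcal P(\R^n)$ with $\nu^V(dt,dx)=dt\,m_t^V(dx)$.

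By Fubini, for each bounded Borel $\varphi$ and a.e. $t$, we have $\int\varphi\,dm_t^V=\E^P\varphi(V_t)$. Taking $\varphi$ in a countable measure-determining family of bounded continuous functions removes the dependence of the null set on $\varphi$. This gives $m_t^V=\Lcal^P(V_t)$ for a.e. $t$.

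Next, $\int_0^T\int|x|^2m_t^V(dx)\,dt=\E^P\int_0^T|V_t|^2dt<\infty$, so $m_t^V\in\mathcal P_2$ for a.e. $t$. On the Borel null set where this fails, redefine $m_t^V:=\delta_0$. This keeps the flow Borel as a map into $\mathcal P_2(\R^n)$.

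The Borel structure on $\mathcal P_2(\R^n)$ needs a short justification. The $W_2$-Borel $\sigma$-field coincides with the trace of the weak-topology Borel $\sigma$-field. It is generated by the maps $\mu\mapsto\int\varphi\,d\mu$ for bounded continuous $\varphi$, together with $\mu\mapsto\int|x|^2d\mu$, which is a monotone limit of such maps. Each of these maps is measurable in $t$ by construction of the kernel. I expect this identification of $\sigma$-fields to be the only genuinely delicate point, and I would cite the standard fact rather than reprove it.

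\emph{Independence of representatives.} If $\tilde V=V$ $dt\otimes P$-a.e., then $\nu^{\tilde V}=\nu^V$. Uniqueness of disintegration up to Lebesgue-null sets then gives $m^{\tilde V}_t=m^V_t$ for a.e. $t$.

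\emph{The estimate.} Construct $m^{V'}$ in the same way. By Fubini, $V_t,V_t'\in L^2(P)$ for a.e. $t$. For every $t$ outside the union of the exceptional null sets, \eqref{eq:coupling} gives
\[
W_2^2(m_t^V,m_t^{V'})=W_2^2(\Lcal^P(V_t),\Lcal^P(V_t'))\le\E^P|V_t-V_t'|^2.
\]
Measurability of $t\mapsto W_2^2(m_t^V,m_t^{V'})$ follows because $W_2$ is continuous on $\mathcal P_2\times\mathcal P_2$ and the pair $(m^V,m^{V'})$ is Borel into the product space. Integrating over $[0,T]$ and applying Fubini to the right-hand side yields the stated bound.
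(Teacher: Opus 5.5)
Your proof is correct, and the estimate step is exactly the paper's: apply the pointwise coupling bound \eqref{eq:coupling} for a.e.\ $t$, then integrate using Fubini.

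The construction differs.

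\emph{The paper's route.} The paper does not disintegrate anything. It sets $m_t^V:=\Lcal^P(V_t)$ whenever $\E^P|V_t|^2<\infty$, and $\delta_0$ otherwise. This is legitimate because the sections of a jointly measurable map are random variables. Borel measurability of $t\mapsto\E^P\psi(V_t)$ and of $t\mapsto\E^P|V_t|^2$ then follows directly from Fubini. The paper then invokes the same weak-plus-second-moment identification of the $W_2$-Borel structure that you use.

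\emph{Your route.} Disintegrating the occupation measure $\nu^V$ is a detour for the existence part, since you need the same Fubini computation anyway to identify $m_t^V$ with $\Lcal^P(V_t)$. What it buys is that the flow is visibly a functional of $\nu^V$ alone. Independence of the representative is therefore immediate from uniqueness of disintegration. The paper instead obtains it as a by-product of the inequality, taking $V'$ to be a second representative of $V$.

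You are also more careful than the paper on two points it leaves implicit. First, that the $W_2$-Borel $\sigma$-field is the trace of the weak one. This is true, for instance because $\mu\mapsto W_2(\mu,\nu)$ is weakly lower semicontinuous and $(\mathcal P_2,W_2)$ is separable, or by Lusin--Souslin. Second, that $t\mapsto W_2^2(m_t^V,m_t^{V'})$ is measurable.
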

\begin{proof}
Since $V\in L^2(dt\otimes P;\R^n)$, Tonelli's theorem gives
\[
q_V(t):=\E^P|V_t|^2\quad\text{measurable},
\qquad
\int_0^Tq_V(t)dt=
\E^P\int_0^T|V_t|^2dt<\infty.
\]
Hence $q_V(t)<\infty$ for a.e. $t$. On the null set where this moment is infinite, set $m_t^V=\delta_0$; otherwise, set $m_t^V=\Lcal^P(V_t)$. Use the same convention for $V'$.

We now verify measurability of the flow. Let $\psi\in C_b(\R^n)$. The map $(t,\omega)\mapsto\psi(V_t(\omega))$ is jointly measurable and bounded, so Fubini's theorem implies that
\[
t\longmapsto \int_{\R^n}\psi(x)m_t^V(dx)
=\E^P\psi(V_t)
\]
is measurable. Choosing a countable family in $C_b(\R^n)$ that determines weak convergence gives measurability of $t\mapsto m_t^V$ for the Borel $\sigma$-field of the weak topology. Moreover,
\[
t\longmapsto M_2(m_t^V)^2
=\E^P|V_t|^2=q_V(t)
\]
is measurable. Since the $W_2$ topology on $\mathcal P_2(\R^n)$ is characterized by weak convergence together with convergence of second moments, $m^V$ is Borel as a map into $(\mathcal P_2(\R^n),W_2)$. The same argument applies to $m^{V'}$.

For a.e. $t$, the joint law
\[
\pi_t:=\Lcal^P(V_t,V_t')
\]
is a coupling of $m_t^V$ and $m_t^{V'}$. By the definition of $W_2$,
\[
W_2^2(m_t^V,m_t^{V'})
\le \int_{\R^n\times\R^n}|x-y|^2\pi_t(dx,dy)
=\E^P|V_t-V_t'|^2.
\]
Integrating in time and applying Tonelli's theorem yields
\[
\int_0^T W_2^2(m_t^V,m_t^{V'})dt
\le \E^P\int_0^T|V_t-V_t'|^2dt.
\]
If $V=V'$ in $L^2(dt\otimes P)$, the right-hand side is zero, so the two flows agree for a.e. $t$. In particular, any expression involving $\Lcal^P(V_t)$ inside a time integral depends only on the equivalence class of $V$ in $L^2(dt\otimes P)$.
\end{proof}

Let $D\subset\R^k$ be a closed set containing $0$. Let $h:D\to\R^d$ be Borel, Lipschitz, and of linear growth: there exist $L_h,C_h<\infty$ such that
\begin{equation}
 |h(a)-h(a')|\le L_h|a-a'|,
 \qquad |h(a)|\le C_h(1+|a|).
\label{eq:hlip}
\end{equation}
Let $c:\mathcal P_2(\R^d)\to\R^d$ be bounded and Lipschitz with respect to the Wasserstein distance:
\begin{equation}
 |c(\mu)-c(\nu)|\le L_cW_2(\mu,\nu),
 \qquad \sup_{\mu\in\mathcal P_2(\R^d)}|c(\mu)|\le C_c.
\label{eq:clip}
\end{equation}
Finally, let $\phi:\R^d\to\R$ be convex, finite, lower semicontinuous, and $\alpha_A$-strongly convex. The law-dependent, control-shifted operator is
\begin{equation}
 A(x,\mu,a):=\partial\phi(x+h(a))+c(\mu),
 \qquad (x,\mu,a)\in\R^d\times\mathcal P_2(\R^d)\times D.
\label{eq:A}
\end{equation}
Here $\partial\phi$ is the convex subdifferential. For fixed $(\mu,a)$, $A(\cdot,\mu,a)$ is the state-space translate of the maximal monotone operator $\partial\phi+c(\mu)$. Equivalently, if
\[
 \Phi(x,\mu,a):=\phi(x+h(a))+\langle c(\mu),x+h(a)\rangle,
\]
then
\[
 A(x,\mu,a)=\partial_x\Phi(x,\mu,a).
\]
Thus every value $A(x,\mu,a)$ is nonempty, closed, and convex, and has a unique element of minimal Euclidean norm. We denote this minimal-norm selection by $A^0(x,\mu,a)$.

For $\lambda>0$, define the resolvent and the Yosida approximation in the state variable by
\begin{equation}
 J_\lambda(x,\mu,a)
 :=(I+\lambda A(\cdot,\mu,a))^{-1}(x),
 \qquad
 A^\lambda(x,\mu,a):=\lambda^{-1}(x-J_\lambda(x,\mu,a)).
\label{eq:YosidaDef}
\end{equation}
By the definition of the resolvent, for every $(x,\mu,a)$ and every $\lambda>0$,
\begin{align}
A^\lambda(x,\mu,a)&\in
A(J_\lambda(x,\mu,a),\mu,a),\label{eq:Yosida-graph}\\
J_\lambda(x,\mu,a)-x&=-\lambda A^\lambda(x,\mu,a).
\label{eq:Yosida-displacement}
\end{align}
The next proposition collects the resolvent properties used below.

\begin{proposition}[Resolvent and Yosida approximation for the moving source]\label{prop:Yosida}
Let $\lambda>0$, $x,x'\in\R^d$, $\mu,\mu'\in\mathcal P_2(\R^d)$, and $a,a'\in D$. Then $A(\cdot,\mu,a)$ is maximal and $\alpha_A$-strongly monotone, and
\begin{align}
|J_\lambda(x,\mu,a)-J_\lambda(x',\mu',a')|
&\le |x-x'|+2L_h|a-a'|+\lambda L_cW_2(\mu,\mu'),\label{eq:Jlip}\\
|A^\lambda(x,\mu,a)-A^\lambda(x',\mu',a')|
&\le\lambda^{-1}\{|x-x'|+L_h|a-a'|\}+L_cW_2(\mu,\mu'),\label{eq:AYlip}\\
|A^\lambda(x,\mu,a)|&\le|A^0(x,\mu,a)|,\qquad
A^\lambda(x,\mu,a)\longrightarrow A^0(x,\mu,a)\quad(\lambda\downarrow0).\label{eq:Aconv}
\end{align}
If $a=a'$, then
\begin{equation}
|J_\lambda(x,\mu,a)-J_\lambda(x',\mu',a)|
\le |x-x'|+\lambda L_cW_2(\mu,\mu').
\label{eq:Jlip-samea}
\end{equation}
\end{proposition}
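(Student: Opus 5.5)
The approach is to reduce everything to the proximal map of $\phi$ via an explicit translation formula. Set $u:=x+h(a)$ and $v:=\lambda c(\mu)$, and write $\operatorname{prox}_{\lambda\phi}:=(I+\lambda\partial\phi)^{-1}$. I claim that
\[
J_\lambda(x,\mu,a)=\operatorname{prox}_{\lambda\phi}\bigl(u-v\bigr)-h(a),\qquad
A^\lambda(x,\mu,a)=(\partial\phi)^\lambda(u-v)+c(\mu).
\]
To see this, note that $y=J_\lambda(x,\mu,a)$ means $x\in y+\lambda\partial\phi(y+h(a))+\lambda c(\mu)$. Adding $h(a)-\lambda c(\mu)$ to both sides gives $u-v\in(y+h(a))+\lambda\partial\phi(y+h(a))$, that is, $y+h(a)=\operatorname{prox}_{\lambda\phi}(u-v)$. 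The formula for $A^\lambda$ then follows from \eqref{eq:YosidaDef}, since $x-J_\lambda=(u-v)-\operatorname{prox}_{\lambda\phi}(u-v)+\lambda c(\mu)$.

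Before using the formula, I would record the structural facts. Since $\phi$ is finite and convex on $\R^d$, $\partial\phi$ is maximal monotone (Rockafellar). Translating the argument by $h(a)$ and adding the constant $c(\mu)$ preserves maximality, so $A(\cdot,\mu,a)$ is maximal. Strong convexity of $\phi$ gives $\langle p-q,y-y'\rangle\ge\alpha_A|y-y'|^2$ for $p\in\partial\phi(y)$ and $q\in\partial\phi(y')$. The constant $c(\mu)$ cancels in this difference, so $A(\cdot,\mu,a)$ is $\alpha_A$-strongly monotone. In particular, the resolvent is single-valued and defined on all of $\R^d$.

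The Lipschitz bound \eqref{eq:Jlip} follows from nonexpansiveness of $\operatorname{prox}_{\lambda\phi}$. We get $|J_\lambda-J_\lambda'|\le|u-u'|+\lambda|c(\mu)-c(\mu')|+|h(a)-h(a')|$, and \eqref{eq:hlip}--\eqref{eq:clip} turn this into $|x-x'|+2L_h|a-a'|+\lambda L_cW_2(\mu,\mu')$. When $a=a'$, the two $h$-terms vanish and we obtain \eqref{eq:Jlip-samea}. Property \eqref{eq:Aconv} is the classical Brezis statement for a fixed maximal monotone operator $B=A(\cdot,\mu,a)$ on $\R^d$. For $y\in D(B)=\R^d$ one has $|B^\lambda y|\le|B^0y|$ and $B^\lambda y\to B^0 y$ as $\lambda\downarrow0$. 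I would cite these facts, or reprove them quickly from the monotonicity inequality $\langle B^\lambda y-B^0y,\,J_\lambda y-y\rangle\ge0$ combined with \eqref{eq:Yosida-displacement}, which gives $|B^\lambda y|^2\le\langle B^\lambda y,B^0y\rangle$.

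The main obstacle is the constant $L_c$ (rather than $2L_c$) in front of $W_2$ in \eqref{eq:AYlip}. The map $(\partial\phi)^\lambda=\lambda^{-1}(I-\operatorname{prox}_{\lambda\phi})$ is $\lambda^{-1}$-Lipschitz, so a naive estimate gives $\lambda^{-1}(|x-x'|+L_h|a-a'|)+2L_cW_2$. To sharpen it, I would write $A^\lambda=\lambda^{-1}\bigl(u-\operatorname{prox}_{\lambda\phi}(u-v)\bigr)$ and use firm nonexpansiveness of $\operatorname{prox}_{\lambda\phi}$. Setting $w=u-v$, this says $|\operatorname{prox}w-\operatorname{prox}w'|^2+|(I-\operatorname{prox})w-(I-\operatorname{prox})w'|^2\le|w-w'|^2$. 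I would try to split the $u$-increment and the $v$-increment through an intermediate point $(u',v)$. The $u$-increment then contributes $\lambda^{-1}|u-u'|$ via nonexpansiveness of $I-\operatorname{prox}$. The $v$-increment contributes $\lambda^{-1}|\operatorname{prox}(u'-v)-\operatorname{prox}(u'-v')|\le\lambda^{-1}|v-v'|=|c(\mu)-c(\mu')|$ via nonexpansiveness of $\operatorname{prox}$. This yields exactly \eqref{eq:AYlip}. If this decomposition needs adjustment, the fallback is the constant $2L_c$, which is harmless for all later uses.
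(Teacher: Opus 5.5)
Your proposal is correct and follows essentially the paper's route. The paper translates only by $h(a)$ and bounds the law dependence by subtracting the two resolvent equations, which amounts to the nonexpansiveness of $\operatorname{prox}_{\lambda\phi}$ that you get directly by also absorbing $\lambda c(\mu)$ into the argument; it then splits the Yosida increment through the same intermediate point, so your decomposition through $(u',v)$ already gives the sharp constant $L_c$ in \eqref{eq:AYlip} and the fallback to $2L_c$ is not needed.
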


\begin{proof}
First fix $(\mu,a)$. The function
\[
\Phi(x,\mu,a)=\phi(x+h(a))+\langle c(\mu),x+h(a)\rangle
\]
is proper, finite, and lower semicontinuous. Since $\phi$ is $\alpha_A$-strongly convex and the second term is affine in $x$, $\Phi(\cdot,\mu,a)$ remains $\alpha_A$-strongly convex. The convex subdifferential theorem therefore implies that
\[
A(\cdot,\mu,a)=\partial_x\Phi(\cdot,\mu,a)
\]
is maximal monotone. More precisely, if $v\in A(x,\mu,a)$ and $v'\in A(x',\mu,a)$, strong convexity gives
\begin{equation}
\langle v-v',x-x'\rangle\ge\alpha_A|x-x'|^2.
\label{eq:Yosida-strongmon-proof}
\end{equation}

Now fix $(x,\mu,a)$ and set
\[
y=J_\lambda(x,\mu,a),\qquad v=A^\lambda(x,\mu,a).
\]
By the resolvent identity,
\[
x=y+\lambda v,\qquad v\in A(y,\mu,a).
\]
Let $v_0=A^0(x,\mu,a)\in A(x,\mu,a)$. Monotonicity of $A$ yields
\[
0\le\langle v-v_0,y-x\rangle
=-\lambda\langle v-v_0,v\rangle.
\]
Hence $|v|^2\le\langle v_0,v\rangle\le|v_0||v|$, and therefore
\begin{equation}
|A^\lambda(x,\mu,a)|\le|A^0(x,\mu,a)|.
\label{eq:Yosida-dom-proof}
\end{equation}
In particular,
\[
|J_\lambda(x,\mu,a)-x|
=\lambda|A^\lambda(x,\mu,a)|
\le\lambda|A^0(x,\mu,a)|\longrightarrow0.
\]
The family $A^\lambda(x,\mu,a)$ is bounded. In finite dimension, every subsequence therefore has a further subsequence converging to some vector $v_*$. Since
\[
J_{\lambda_n}(x,\mu,a)\to x,
\qquad
A^{\lambda_n}(x,\mu,a)\to v_*,
\qquad
A^{\lambda_n}(x,\mu,a)\in A(J_{\lambda_n}(x,\mu,a),\mu,a),
\]
closedness of the graph of a maximal monotone operator gives $v_*\in A(x,\mu,a)$. The domination \eqref{eq:Yosida-dom-proof} yields $|v_*|\le|A^0(x,\mu,a)|$. By minimality of $A^0$, necessarily $v_*=A^0(x,\mu,a)$. Since every cluster point is the same, the whole family converges:
\[
A^\lambda(x,\mu,a)\to A^0(x,\mu,a).
\]
This proves \eqref{eq:Aconv}.

It remains to control the dependence on $(x,\mu,a)$. Set
\[
z=x+h(a),\qquad z'=x'+h(a'),
\qquad B(s,\mu)=\partial\phi(s)+c(\mu),
\]
and write $\widehat J_\lambda(\cdot,\mu)=(I+\lambda B(\cdot,\mu))^{-1}$. We make the translation identity explicit. If $y=J_\lambda(x,\mu,a)$, there exists $p\in\partial\phi(y+h(a))$ such that
\[
x=y+\lambda\bigl(p+c(\mu)\bigr).
\]
Setting $w=y+h(a)$ and $z=x+h(a)$ gives
\[
z=w+\lambda\bigl(p+c(\mu)\bigr),\qquad p\in\partial\phi(w),
\]
so $w=\widehat J_\lambda(z,\mu)$. Consequently,
\begin{equation}
J_\lambda(x,\mu,a)+h(a)=\widehat J_\lambda(z,\mu),
\qquad
A^\lambda(x,\mu,a)=B^\lambda(z,\mu).
\label{eq:translation-yosida}
\end{equation}
For fixed law, firm nonexpansiveness of the resolvent gives
\[
|\widehat J_\lambda(z,\mu)-\widehat J_\lambda(z',\mu)|\le|z-z'|.
\]
For fixed state $z'$, set
\[
y=\widehat J_\lambda(z',\mu),\qquad
\bar y=\widehat J_\lambda(z',\mu').
\]
There exist $p\in\partial\phi(y)$ and $\bar p\in\partial\phi(\bar y)$ such that
\[
z'=y+\lambda(p+c(\mu)),
\qquad
z'=\bar y+\lambda(\bar p+c(\mu')).
\]
Subtracting and taking the inner product with $y-\bar y$ yields
\[
|y-\bar y|^2
+\lambda\langle p-\bar p,y-\bar y\rangle
=-\lambda\langle c(\mu)-c(\mu'),y-\bar y\rangle.
\]
The monotone term is nonnegative, and Cauchy--Schwarz gives
\[
|y-\bar y|\le\lambda|c(\mu)-c(\mu')|
\le\lambda L_cW_2(\mu,\mu').
\]
The triangle inequality therefore yields
\[
|\widehat J_\lambda(z,\mu)-\widehat J_\lambda(z',\mu')|
\le|z-z'|+\lambda L_cW_2(\mu,\mu').
\]
Returning to $J_\lambda$ and using $|z-z'|\le|x-x'|+L_h|a-a'|$, we obtain
\[
|J_\lambda(x,\mu,a)-J_\lambda(x',\mu',a')|
\le |x-x'|+2L_h|a-a'|+\lambda L_cW_2(\mu,\mu'),
\]
which is \eqref{eq:Jlip}. If $a=a'$, the term generated by subtracting the two translations $h(a)$ and $h(a')$ vanishes, giving \eqref{eq:Jlip-samea}.

Finally, for the Yosida approximation, at fixed law,
\[
|B^\lambda(z,\mu)-B^\lambda(z',\mu)|\le\lambda^{-1}|z-z'|.
\]
At fixed state, the identity $B^\lambda(z,\mu)=\lambda^{-1}(z-\widehat J_\lambda(z,\mu))$ and the preceding law estimate give
\[
|B^\lambda(z,\mu)-B^\lambda(z,\mu')|
\le L_cW_2(\mu,\mu').
\]
One last application of the triangle inequality, together with $|z-z'|\le|x-x'|+L_h|a-a'|$, proves \eqref{eq:AYlip} and completes the proof.
\end{proof}

The estimates \eqref{eq:Jlip} and \eqref{eq:AYlip} show that, for fixed $\lambda>0$, the maps
\[
(x,\mu,a)\longmapsto J_\lambda(x,\mu,a),
\qquad
(x,\mu,a)\longmapsto A^\lambda(x,\mu,a)
\]
are continuous on $\R^d\times\mathcal P_2(\R^d)\times D$ when $\mathcal P_2(\R^d)$ is endowed with $W_2$, and hence are Borel measurable. By \eqref{eq:Aconv}, $A^0$ is Borel measurable as the pointwise limit of $A^{1/n}$. Thus $A^0(X_{t-},\Lcal(X_{t-}),\alpha_t)$ and its Yosida approximations are predictable whenever $X$ is adapted c\`adl\`ag and $\alpha$ is predictable.

\paragraph{Control-shifted ALA prototype.}
For $r\in\R$, write $r_+=\max\{r,0\}$ and $r_-=\max\{-r,0\}$. The main prototype considered in this paper is
\begin{equation}
\phi_{\rm ALA}(s)
=\sum_{k=1}^d\{\kappa_+(s_k)_++\kappa_-(s_k)_-\}
+\frac{\alpha_A}{2}|s|^2,
\qquad \kappa_->\kappa_+>0,
\label{eq:ALAphi}
\end{equation}
combined with the shift $s=x+h(a)$ and the law translation $c(\mu)$:
\begin{equation}
A_{\rm ALA}(x,\mu,a)
:=\partial\phi_{\rm ALA}(x+h(a))+c(\mu).
\label{eq:ALAmov}
\end{equation}
We invoke the ALA prototype only when its piecewise structure is needed. The well-posedness and stability results of Sections~2--3, as well as the general convergence and closedness results at the beginning of Section~4, remain stated for the general operator \eqref{eq:A} under the corresponding assumptions. Results that use orthant geometry or the interface \eqref{eq:Sigma} are explicitly identified as belonging to the \emph{ALA prototype}. For $(x,\mu,a)\in\R^d\times\mathcal P_2(\R^d)\times D$, set $s=x+h(a)$. Componentwise, the minimal-norm selection is
\begin{equation}
[A^0(x,\mu,a)]_k
=\alpha_As_k+c_k(\mu)+
\begin{cases}
\kappa_+,&s_k>0,\\
-\kappa_-,&s_k<0,\\
\operatorname{proj}_{[-\kappa_-,\kappa_+]}
\bigl(-c_k(\mu)\bigr),&s_k=0.
\end{cases}
\label{eq:A0explicit}
\end{equation}
The switching set depends on the control:
\begin{equation}
\Sigma(a):=\bigcup_{k=1}^d\{x\in\R^d:x_k=-h_k(a)\},
\qquad
\rho(x,a):=\operatorname{dist}(x,\Sigma(a))
=\min_{1\le k\le d}|x_k+h_k(a)|.
\label{eq:Sigma}
\end{equation}
For every $(x,\mu,a)\in\R^d\times\mathcal P_2(\R^d)\times D$, the source satisfies the uniform growth bound
\begin{equation}
|A^0(x,\mu,a)|
\le C_A(1+|x|+|a|),
\label{eq:A0growth}
\end{equation}
where $C_A$ depends only on $d,\alpha_A,\kappa_\pm,C_h,C_c$. In particular, no moment of $\mu$ is needed in this estimate because $c$ is bounded.

In the scalar case $d=1$, one choice used in the companion control problem is
\[
c(\mu)=\eta_c\tanh\!\left(\int_\R y\,\mu(dy)\right),
\qquad 0\le\eta_c<\min\{\kappa_+,\kappa_-\}.
\]
The interface then reduces to the single moving threshold $\Sigma(a)=\{-h(a)\}$.

\subsection{Coefficients, controls, and the system}

The control $\alpha$ takes values in the closed set $D\subset\R^k$ fixed above; that is, for every $P\in\Pcal$, $\alpha_t\in D$ for all $t\in[0,T]$, $P$-a.s. To avoid ambiguity caused by different completed filtrations, we require $\alpha$ to admit a version predictable with respect to the raw canonical filtration generated by $(\xi,B,N)$. The same functional then defines, after completion, an $\F^P$-predictable control under each model. For $p\ge2$, its integrability is specified by
\begin{equation}
\sup_{P\in\Pcal}\E^P\int_0^T|\alpha_t|^pdt<\infty.
\label{eq:controlq}
\end{equation}
The set of controls satisfying these conditions is denoted by $\mathcal A^p$. Well-posedness only requires $\alpha\in\mathcal A^2$. The coefficients have dimensions
\begin{align*}
b&:[0,T]\times\R^d\times\mathcal P_2(\R^d)\times D\to\R^d,\\
\sigma&:[0,T]\times\R^d\times\mathcal P_2(\R^d)\times D
\to\R^{d\times m},\\
\beta&:[0,T]\times\R^d\times\mathcal P_2(\R^d)\times D\times E
\to\R^d,\\
f&:[0,T]\times\R^d\times\mathcal P_2(\R^d)\times\R
\times\R^{1\times m}\times L_\Pi^2(E;\R)
\times\mathcal P_2(\R)\times D\to\R,\\
g&:\R^d\times\mathcal P_2(\R^d)\to\R.
\end{align*}

\begin{hypothesis}[Forward coefficients]\label{HF}
The functions $b,\sigma,\beta$ are measurable, and there exist $L_F,C_F<\infty$ such that, for every $t\in[0,T]$, all $x,x'\in\R^d$, $\mu,\mu'\in\mathcal P_2(\R^d)$, and $a,a'\in D$,
\begin{align}
&|b(t,x,\mu,a)-b(t,x',\mu',a')|
+\|\sigma(t,x,\mu,a)-\sigma(t,x',\mu',a')\|\nonumber\\
&\quad+\left(\int_E|\beta(t,x,\mu,a,e)
-\beta(t,x',\mu',a',e)|^2\Pi(de)\right)^{1/2}\nonumber\\
&\le L_F\bigl(|x-x'|+\Wtwo(\mu,\mu')+|a-a'|\bigr),
\label{eq:FLip}\\
&|b(t,0,\delta_0,0)|+\|\sigma(t,0,\delta_0,0)\|
+\|\beta(t,0,\delta_0,0,\cdot)\|_{L_\Pi^2}\le C_F.
\label{eq:Fzero}
\end{align}
\end{hypothesis}

\begin{hypothesis}[Backward coefficients]\label{HB}
The functions $f$ and $g$ are Borel measurable. There exist $L_B,C_B<\infty$ such that, for every $t\in[0,T]$, all $x,x'\in\R^d$, $\mu,\mu'\in\mathcal P_2(\R^d)$, $y,y'\in\R$, $z,z'\in\R^{1\times m}$, $u,u'\in L_\Pi^2(E;\R)$, $\eta,\eta'\in\mathcal P_2(\R)$, and $a,a'\in D$,
\begin{align}
&|f(t,x,\mu,y,z,u,\eta,a)
-f(t,x',\mu',y',z',u',\eta',a')|\nonumber\\
&\le L_B\bigl(
|x-x'|+\Wtwo(\mu,\mu')+|y-y'|+|z-z'|
+\|u-u'\|_\Pi+\Wtwo(\eta,\eta')+|a-a'|
\bigr),\label{eq:fLip}\\
&|f(t,0,\delta_0,0,0,0,\delta_0,0)|\le C_B.\label{eq:fzero}
\end{align}
Moreover, for all $x,x'\in\R^d$ and $\mu,\mu'\in\mathcal P_2(\R^d)$,
\begin{align}
&|g(x,\mu)-g(x',\mu')|
\le L_B\bigl(|x-x'|+\Wtwo(\mu,\mu')\bigr),
\qquad |g(0,\delta_0)|\le C_B.
\label{eq:gLip}
\end{align}
\end{hypothesis}

\begin{hypothesis}[Data and control-dependent selected source]\label{HD}
The initial variable $\xi$ is $\mathcal F_0^P$-measurable under each $P$, its law does not depend on the model, and
\[
\sup_{P\in\Pcal}\E^P|\xi|^2<\infty.
\]
The source is the translated operator \eqref{eq:A}; its minimal-norm selection $(x,\mu,a)\mapsto A^0(x,\mu,a)$ is Borel measurable, and we assume that, for every $(x,\mu,a)\in\R^d\times\mathcal P_2(\R^d)\times D$,
\[
|A^0(x,\mu,a)|\le C_A(1+|x|+|a|).
\]
This condition is automatic for the ALA prototype by \eqref{eq:A0growth}. Finally, $\varrho\in\R^d$ is fixed.
\end{hypothesis}

\begin{hypothesis}[Higher moments]\label{HqJ}
For a fixed $q>4$, assume $\alpha\in\mathcal A^q$ and $\sup_{P\in\Pcal}\E^P|\xi|^q<\infty$. For every $t\in[0,T]$, $x\in\R^d$, $\mu\in\mathcal P_q(\R^d)$, and $a\in D$,
\begin{equation}
\int_E|\beta(t,x,\mu,a,e)|^q\Pi(de)
\le C_F^q(1+|x|^q+M_q(\mu)^q+|a|^q).
\label{eq:betaq}
\end{equation}
This assumption supplements the quadratic framework only when higher-order moments and quantitative regularization estimates are needed. It is not required for well-posedness or quadratic stability.
\end{hypothesis}

For $P\in\Pcal$ and $t\in[0,T]$, set
\[
\mu_t^P=\Lcal^P(X_t^P),\quad
\mu_{t-}^P=\Lcal^P(X_{t-}^P),\quad
\eta_t^P=\Lcal^P(Y_t^P).
\]
Under the preceding assumptions, the system is rigorously formulated as
\begin{align}
X_t^P
={}&\xi+\int_0^t b(s,X_{s-}^P,\mu_{s-}^P,\alpha_s)ds
+\int_0^t\sigma(s,X_{s-}^P,\mu_{s-}^P,\alpha_s)dB_s\nonumber\\
&+\int_0^t\int_E\beta(s,X_{s-}^P,\mu_{s-}^P,\alpha_s,e)
\widetilde N^P(ds,de),\qquad t\in[0,T],\quad P\text{-a.s.}\label{eq:FWD}\\
Y_t^P
={}&g(X_T^P,\mu_T^P)
+\int_t^T f(s,X_{s-}^P,\mu_{s-}^P,Y_s^P,Z_s^P,U_s^P,
\eta_s^P,\alpha_s)ds\nonumber\\
&+\mathsf K_T^P-\mathsf K_t^P
-\int_t^TZ_s^P\,dB_s
-\int_t^T\int_EU_s^P(e)\widetilde N^P(ds,de),\qquad t\in[0,T],\quad P\text{-a.s.}\label{eq:BWD}\\
\mathsf K_t^P
={}&\int_0^t\langle\varrho,
A^0(X_{s-}^P,\mu_{s-}^P,\alpha_s)\rangle ds,\qquad t\in[0,T],\quad P\text{-a.s.}
\label{eq:Kdef}
\end{align}
Left limits are used to make the integrands predictable. Since a c\`adl\`ag process has at most countably many jump times, $X_{s-}^P=X_s^P$ for $dt\otimes P$-a.e. $(s,\omega)$.

\begin{definition}[Modelwise solution]
\label{def:modelsolution}
Fix $P\in\Pcal$. A solution of the selected system is a sextuple
\[
(X^P,Y^P,Z^P,U^P,\Gamma^P,\mathsf K^P)
\]
such that
\begin{align*}
&X^P\in\mathbb S^2(P;\R^d),\qquad
Y^P,\mathsf K^P\in\mathbb S^2(P),\\
&Z^P\in\mathbb H_B^2(P),\qquad
U^P\in\mathbb H_\Pi^2(P),\qquad
\Gamma^P\in L^2(dt\otimes P;\R^d).
\end{align*}
The processes $X^P,Y^P$ are adapted and c\`adl\`ag, $Z^P,U^P$ are predictable, $\Gamma^P$ is predictable, and $\mathsf K^P$ is adapted, continuous, and absolutely continuous in time. They satisfy
\begin{align*}
\mu_t^P&=\Lcal^P(X_t^P),&
\eta_t^P&=\Lcal^P(Y_t^P),\\
\Gamma_t^P&=A^0(X_{t-}^P,\mu_{t-}^P,\alpha_t)
&&dt\otimes P\text{-a.e.},
\end{align*}
and the identities \eqref{eq:FWD}--\eqref{eq:Kdef} for every $t\in[0,T]$, up to indistinguishability under $P$. All stochastic integrals are taken in their respective square-integrable spaces. A regularized solution with parameter $\lambda>0$ is defined analogously by replacing $A^0(X,\mu,\alpha)$ with $A^\lambda(X,\mu,\alpha)$ in $\Gamma^P$ and in \eqref{eq:Kdef}.
\end{definition}

\paragraph{Dimension of the backward variable.}
The variable $Y$ is scalar and the vector-valued source is projected through $\langle\varrho,\cdot\rangle$. Monotonicity of $A$ remains a property in the state variable $x$ and does not induce any monotonicity in $Y$. A vector-valued extension of $Y$ is possible under the corresponding Lipschitz assumptions but is not pursued here.

\begin{remark}[Nature of $\mathsf K$]
The operator acts on the forward state. The process $\mathsf K^P$ is absolutely continuous, and its total variation satisfies
\[
\operatorname{Var}(\mathsf K^P)_T
\le |\varrho|\int_0^T|A^0(X_{s-}^P,\mu_{s-}^P,\alpha_s)|ds.
\]
It need not be increasing because the inner product with $\varrho$ may change sign. It does not carry the jumps of $Y^P$; those come from the Poisson integral. Thus \eqref{eq:BWD} is neither a reflected equation nor a backward stochastic variational inequality (BSVI) in the variable $Y$.
\end{remark}

Occupation assumptions are used only in Section~4 to obtain the quantitative Yosida rate.

\section{Modelwise Well-Posedness}

\subsection{Forward equation with jumps}

For the forward equation with a prescribed law flow, we use standard Brownian--Poisson SDE theory; see \cite[Chapter~6]{Applebaum2009}. The McKean--Vlasov dependence is then handled by a fixed point on the flow of laws.

\begin{lemma}[Integral stability of the forward equation]\label{lem:Fstab}
Fix $P\in\Pcal$ and assume \ref{HF}. Let $m,\bar m\in C([0,T];\mathcal P_2(\R^d))$, let $\xi,\bar\xi\in L^2(\mathcal F_0^P,P;\R^d)$, and let $\alpha,\bar\alpha\in L^2(dt\otimes P)$ be two predictable $D$-valued controls. Let $X,\bar X\in\mathbb S^2(P;\R^d)$ be the solutions corresponding to $(m,\xi,\alpha)$ and $(\bar m,\bar\xi,\bar\alpha)$, respectively, under the same model and driven by the same integrators. Then, for every $t\in[0,T]$,
\begin{align}
\E^P\sup_{r\le t}|X_r-\bar X_r|^2
\le C\Bigl(&\E^P|\xi-\bar\xi|^2
+\E^P\int_0^t|\alpha_s-\bar\alpha_s|^2ds\nonumber\\
&+\int_0^t\Wtwo^2(m_s,\bar m_s)ds\Bigr),
\label{eq:Fstab}
\end{align}
where $C$ depends only on $T$, the structural constants in \ref{HF}, $\overline a$, and the dimensions.
\end{lemma}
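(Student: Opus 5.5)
The argument is the standard Gronwall estimate for Lipschitz SDEs. We track the law flows as deterministic inputs, so that the McKean--Vlasov structure plays no role here. Set $\Delta_r:=X_r-\bar X_r$. Subtracting the two equations gives
\[
\Delta_r=(\xi-\bar\xi)+\int_0^r\delta b_s\,ds+\int_0^r\delta\sigma_s\,dB_s+\int_0^r\int_E\delta\beta_s(e)\,\widetilde N^P(ds,de),
\]
where $\delta b_s=b(s,X_{s-},m_{s-},\alpha_s)-b(s,\bar X_{s-},\bar m_{s-},\bar\alpha_s)$, and $\delta\sigma_s$, $\delta\beta_s$ are defined in the same way. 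Since $m,\bar m$ are continuous in $W_2$, we have $m_{s-}=m_s$. We bound $\sup_{r\le u}|\Delta_r|^2$ by $4$ times the sum of the squared suprema of the four terms.

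\textbf{Drift term.} By Cauchy--Schwarz, $\sup_{r\le u}|\int_0^r\delta b_s\,ds|^2\le T\int_0^u|\delta b_s|^2ds$.

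\textbf{Martingale terms.} These are handled by Lemma~\ref{lem:mart}, applied row by row to the $\R^d$-valued integrands; stopping on $[0,u]$ is harmless. For the continuous part we use $|z|_{a^P}^2\le |z|^2\,|\overline a|_{\rm op}$, which gives
\[
\E^P\sup_{r\le u}\Big|\int_0^r\delta\sigma_s\,dB_s\Big|^2\le C\,\E^P\int_0^u\|\delta\sigma_s\|^2ds .
\]
For the jump part,
\[
\E^P\sup_{r\le u}\Big|\int_0^r\!\!\int_E\delta\beta_s\,d\widetilde N^P\Big|^2\le C\,\E^P\int_0^u\|\delta\beta_s\|_{L^2_\Pi}^2ds .
\]
The constant depends only on $\overline a$, because the compensator $\Pi(de)\,ds$ is common to all models. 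This is where the $P$-independence of $C$ comes from.

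\textbf{Lipschitz bound.} By \eqref{eq:FLip}, each of $|\delta b_s|^2$, $\|\delta\sigma_s\|^2$ and $\|\delta\beta_s\|_{L^2_\Pi}^2$ is at most
\[
3L_F^2\bigl(|\Delta_{s-}|^2+W_2^2(m_s,\bar m_s)+|\alpha_s-\bar\alpha_s|^2\bigr).
\]

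\textbf{Gronwall step.} Setting $\varphi(u):=\E^P\sup_{r\le u}|\Delta_r|^2$ and using $|\Delta_{s-}|^2\le\sup_{r\le s}|\Delta_r|^2$, we obtain, for $u\le t$,
\[
\varphi(u)\le C\Big(\E^P|\xi-\bar\xi|^2+\E^P\int_0^t|\alpha_s-\bar\alpha_s|^2ds+\int_0^tW_2^2(m_s,\bar m_s)ds\Big)+C\int_0^u\varphi(s)\,ds .
\]
Gronwall's lemma on $[0,t]$ then gives \eqref{eq:Fstab}, with $C$ depending only on $T$, $L_F$, $\overline a$ and the dimensions. The constant $C_F$ does not even enter.

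The only delicate point is applying Gronwall: we need $\varphi(u)<\infty$. This holds because $X,\bar X\in\mathbb S^2(P;\R^d)$ by hypothesis, so no localization is needed. If one prefers not to rely on that, one localizes with $\tau_n=\inf\{r:|\Delta_r|\ge n\}$ and applies monotone convergence. The jump at $\tau_n$ is controlled because $\sup_{r\le \tau_n}|\Delta_r|\le n+|\Delta_{\tau_n}|$ and $|\Delta_{\tau_n}|$ is square integrable. Measurability of $s\mapsto W_2^2(m_s,\bar m_s)$ is immediate from the continuity of the flows.
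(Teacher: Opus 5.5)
Your proposal is correct and follows the paper's proof step for step: subtract the equations, split with the factor $4$, use Cauchy--Schwarz for the drift, Lemma~\ref{lem:mart} with $a^P\le\overline a$ for the two martingales and the Lipschitz bound \eqref{eq:FLip}, then apply Gronwall to $u\mapsto\E^P\sup_{r\le u}|X_r-\bar X_r|^2$. Your remarks on the finiteness of this function (from $X,\bar X\in\mathbb S^2(P;\R^d)$) and on applying Lemma~\ref{lem:mart} row by row only make explicit what the paper leaves implicit.
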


\begin{proof}
Subtracting the two forward equations first, before introducing shorthand notation, gives for every $r\in[0,T]$, $P$-a.s.,
\begin{align*}
X_r-\bar X_r
={}&\xi-\bar\xi
+\int_0^r\Bigl[b(s,X_{s-},m_s,\alpha_s)
-b(s,\bar X_{s-},\bar m_s,\bar\alpha_s)\Bigr]ds\\
&+\int_0^r\Bigl[\sigma(s,X_{s-},m_s,\alpha_s)
-\sigma(s,\bar X_{s-},\bar m_s,\bar\alpha_s)\Bigr]dB_s\\
&+\int_0^r\int_E\Bigl[\beta(s,X_{s-},m_s,\alpha_s,e)
-\beta(s,\bar X_{s-},\bar m_s,\bar\alpha_s,e)\Bigr]\widetilde N^P(ds,de).
\end{align*}
We now introduce the compact notation used in the estimates:
\[
\Delta X_t:=X_t-\bar X_t,\qquad
\Delta\alpha_t:=\alpha_t-\bar\alpha_t,
\qquad
\Delta\xi:=\xi-\bar\xi,
\]
and, for $s\in[0,T]$ and $e\in E$,
\begin{align*}
\Delta b_s&:=b(s,X_{s-},m_s,\alpha_s)-b(s,\bar X_{s-},\bar m_s,\bar\alpha_s),\\
\Delta\sigma_s&:=\sigma(s,X_{s-},m_s,\alpha_s)-\sigma(s,\bar X_{s-},\bar m_s,\bar\alpha_s),\\
\Delta\beta_s(e)&:=\beta(s,X_{s-},m_s,\alpha_s,e)-\beta(s,\bar X_{s-},\bar m_s,\bar\alpha_s,e).
\end{align*}
The process identity defining $\Delta X$ is understood up to indistinguishability under $P$; the identities for the predictable integrands are understood $dt\otimes P$-a.e., and for $\Delta\beta$ also $\Pi(de)$-a.e. in $e$. With this notation, the preceding identity becomes
\[
\Delta X_r=\Delta\xi+\int_0^r\Delta b_sds
+\int_0^r\Delta\sigma_s dB_s
+\int_0^r\int_E\Delta\beta_s(e)\widetilde N^P(ds,de),
\qquad r\in[0,T],\quad P\text{-a.s.}
\]
For $r\le t$, the inequality $|x_1+x_2+x_3+x_4|^2\le4\sum_{j=1}^4|x_j|^2$ gives, after taking the supremum and expectation,
\begin{align*}
\E^P\sup_{r\le t}|\Delta X_r|^2
\le{}&4\E^P|\Delta\xi|^2
+4\E^P\sup_{r\le t}\left|\int_0^r\Delta b_sds\right|^2\\
&+4\E^P\sup_{r\le t}\left|\int_0^r\Delta\sigma_s dB_s\right|^2
+4\E^P\sup_{r\le t}\left|\int_0^r\int_E\Delta\beta_s(e)\widetilde N^P(ds,de)\right|^2.
\end{align*}
Cauchy--Schwarz in time yields
\[
\sup_{r\le t}\left|\int_0^r\Delta b_sds\right|^2
\le t\int_0^t|\Delta b_s|^2ds.
\]
For the continuous martingale, Lemma~\ref{lem:mart} and $a^P\le\overline a$ give
\[
\E^P\sup_{r\le t}\left|\int_0^r\Delta\sigma_s dB_s\right|^2
\le C\E^P\int_0^t\|\Delta\sigma_s\|^2ds,
\]
and, for the Poisson part,
\[
\E^P\sup_{r\le t}\left|\int_0^r\int_E\Delta\beta_s(e)\widetilde N^P(ds,de)\right|^2
\le C\E^P\int_0^t\|\Delta\beta_s\|_{L_\Pi^2}^2ds.
\]
Assumption~\ref{HF} implies, with a constant depending only on $L_F$,
\begin{align*}
|\Delta b_s|^2+\|\Delta\sigma_s\|^2+\|\Delta\beta_s\|_{L_\Pi^2}^2
\le C\bigl(&|\Delta X_{s-}|^2+W_2^2(m_s,\bar m_s)\\
&+|\alpha_s-\bar\alpha_s|^2\bigr).
\end{align*}
Since $|\Delta X_{s-}|\le\sup_{u\le s}|\Delta X_u|$ outside a $ds\otimes P$-null set, defining
\[
u_P(t):=\E^P\sup_{r\le t}|\Delta X_r|^2,
\]
we obtain
\begin{align*}
u_P(t)
\le{}&C\E^P|\xi-\bar\xi|^2
+C\E^P\int_0^t|\alpha_s-\bar\alpha_s|^2ds
+C\int_0^tW_2^2(m_s,\bar m_s)ds\\
&+C\int_0^tu_P(s)ds.
\end{align*}
The function
\[
h_P(t)=C\left(\E^P|\xi-\bar\xi|^2+
\E^P\int_0^t|\alpha_s-\bar\alpha_s|^2ds+
\int_0^tW_2^2(m_s,\bar m_s)ds\right)
\]
is nondecreasing. The Gronwall form
\begin{equation}
u(t)\le h(t)+k\int_0^tu(s)ds,
\quad h\text{ nondecreasing}
\quad\Longrightarrow\quad
u(t)\le e^{kt}h(t)
\label{eq:GronwallExplicit}
\end{equation}
yields \eqref{eq:Fstab}. The constants in the three stochastic estimates are common to the model family; no dominating measure is used.
\end{proof}

\begin{corollary}[Stability with random law flows]\label{cor:Fstabrandom}
Fix $P\in\Pcal$ and assume \ref{HF}. Let $m,\bar m$ be progressively measurable $\mathcal P_2(\R^d)$-valued flows such that
\[
\E^P\int_0^T\{M_2(m_s)^2+M_2(\bar m_s)^2\}ds<\infty.
\]
Under the remaining assumptions of Lemma~\ref{lem:Fstab}, the solutions $X,\bar X$ satisfy, for every $t\in[0,T]$,
\begin{align*}
\E^P\sup_{r\le t}|X_r-\bar X_r|^2
\le C\Bigl(&\E^P|\xi-\bar\xi|^2
+\E^P\int_0^t|\alpha_s-\bar\alpha_s|^2ds\\
&+\E^P\int_0^t\Wtwo^2(m_s,\bar m_s)ds\Bigr).
\end{align*}
\end{corollary}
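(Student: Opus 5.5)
The argument of Lemma~\ref{lem:Fstab} carries over almost verbatim: the deterministic flows $m,\bar m$ entered only through the pointwise Lipschitz bound \eqref{eq:FLip}, and that bound holds $\omega$ by $\omega$ even when the flows are random. I would subtract the two equations as before, now with integrands $b(s,X_{s-},m_s,\alpha_s)$ and so on, where $m_s=m_s(\omega)$. Progressive measurability of $s\mapsto m_s$ (as a $(\mathcal P_2,W_2)$-valued map), combined with the Borel measurability of the coefficients, makes the composed integrands progressively measurable. Since $X_{s-}$ is predictable, the stochastic integrals are well defined; if strict predictability is wanted, one can evaluate at $m_{s-}$ or note that the integrals only see a $ds\otimes P$-version. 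The moment condition $\E^P\int_0^T M_2(m_s)^2ds<\infty$ guarantees square integrability of the coefficients through the linear growth implied by \eqref{eq:FLip}--\eqref{eq:Fzero}, using $W_2(m_s,\delta_0)=M_2(m_s)$. This puts $\Delta\sigma,\Delta\beta$ in $\mathbb H^2$, so Lemma~\ref{lem:mart} applies.

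Next I would decompose $\E^P\sup_{r\le t}|\Delta X_r|^2$ into four terms exactly as in the proof of Lemma~\ref{lem:Fstab}. The drift term is handled by Cauchy--Schwarz in time, and the two martingale terms by \eqref{eq:BDGB} and \eqref{eq:BDGN} together with $a^P\le\overline a$. The only change is that the pointwise bound now reads
\[
|\Delta b_s|^2+\|\Delta\sigma_s\|^2+\|\Delta\beta_s\|_{L_\Pi^2}^2\le C\bigl(|\Delta X_{s-}|^2+W_2^2(m_s,\bar m_s)+|\Delta\alpha_s|^2\bigr),
\]
with $W_2^2(m_s,\bar m_s)$ now random. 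Taking expectations and using Tonelli gives $u_P(t)\le h_P(t)+C\int_0^tu_P(s)ds$, where
\[
h_P(t)=C\Bigl(\E^P|\Delta\xi|^2+\E^P\int_0^t|\Delta\alpha_s|^2ds+\E^P\int_0^tW_2^2(m_s,\bar m_s)ds\Bigr).
\]
This $h_P$ is nondecreasing and finite, because $W_2^2(m_s,\bar m_s)\le 2M_2(m_s)^2+2M_2(\bar m_s)^2$. The Gronwall form \eqref{eq:GronwallExplicit} then concludes with the same constant $C$.

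The main obstacle is measurability and well-posedness rather than the estimate. The delicate points are:
\begin{itemize}
\item showing that $(s,\omega)\mapsto W_2(m_s(\omega),\bar m_s(\omega))$ is progressively measurable;
\item showing that the solutions $X,\bar X$ exist in $\mathbb S^2(P)$ with random environments.
\end{itemize}
For the first, I would use continuity of $W_2$ on $\mathcal P_2\times\mathcal P_2$, so the map is a composition of measurable maps. For the second, I would argue that the standard Lipschitz SDE theory with random, progressively measurable Lipschitz coefficients $(s,\omega,x)\mapsto b(s,x,m_s(\omega),\alpha_s(\omega))$ applies, since the Lipschitz constant in $x$ is still $L_F$ and the growth at $x=0$ is square integrable by the moment assumption. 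Finiteness of $u_P$, needed for Gronwall, follows from this existence step.
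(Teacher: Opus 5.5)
Your proposal is correct and follows essentially the same route as the paper: subtract the equations, use the pointwise Lipschitz bound from \ref{HF} with the now-random term $W_2^2(m_s,\bar m_s)$, apply Cauchy--Schwarz to the drift and Lemma~\ref{lem:mart} to the two martingales, take expectations, and conclude with the Gronwall form \eqref{eq:GronwallExplicit}. Your extra remarks on measurability, existence with random environments, and finiteness of $u_P$ only spell out what the paper's short proof takes for granted; the paper itself just notes that modifying the flows on a $dt\otimes P$-null set does not affect the integrals.
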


\begin{proof}
For progressively measurable random flows, the difference equation is the same as in Lemma~\ref{lem:Fstab}. Assumption~\ref{HF} now gives, pointwise in $(s,\omega)$,
\[
|\Delta b_s|^2+\|\Delta\sigma_s\|^2+\|\Delta\beta_s\|_{L_\Pi^2}^2
\le C\bigl(|\Delta X_{s-}|^2+W_2^2(m_s,\bar m_s)+|\Delta\alpha_s|^2\bigr).
\]
The moment assumptions on the flows ensure integrability of all terms. Repeating Cauchy--Schwarz for the drift and the two estimates from Lemma~\ref{lem:mart}, and then taking expectations, gives
\[
\E^P\sup_{r\le t}|\Delta X_r|^2
\le C\E^P|\Delta\xi|^2
+C\E^P\int_0^t\bigl(|\Delta\alpha_s|^2+W_2^2(m_s,\bar m_s)\bigr)ds
+C\int_0^t\E^P\sup_{u\le s}|\Delta X_u|^2ds.
\]
Applying \eqref{eq:GronwallExplicit} proves the claim. Modifications of the flows on a $dt\otimes P$-null set do not affect any of the integrals.
\end{proof}

\begin{lemma}[Construction of the frozen-law forward equation]\label{lem:frozenfull}
Fix $P\in\Pcal$ and assume \ref{HF}. Let $m\in C([0,T];\mathcal P_2(\R^d))$, $\xi\in L^2(\mathcal F_0^P,P;\R^d)$, and let $\alpha$ be a predictable $D$-valued control such that $\E^P\int_0^T|\alpha_t|^2dt<\infty$. Then \eqref{eq:FWD}, with the law flow replaced by $m$, admits a unique solution $X^m\in\mathbb S^2(P;\R^d)$ and
\[
\E^P\sup_{t\le T}|X_t^m|^2\le C\left[1+\E^P|\xi|^2+
\E^P\int_0^T|\alpha_t|^2dt+\int_0^TM_2(m_t)^2dt\right].
\]
\end{lemma}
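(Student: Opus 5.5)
With the law flow $m$ frozen, \eqref{eq:FWD} becomes a classical SDE with jumps. Its coefficients are random through $\alpha$ and Lipschitz in the state. I would therefore construct $X^m$ by a Picard iteration in $\mathbb S^2(P;\R^d)$, restricted to short time intervals or equipped with an exponentially weighted norm. Set
\[
\tilde b(s,x):=b(s,x,m_s,\alpha_s),\qquad \tilde\sigma(s,x):=\sigma(s,x,m_s,\alpha_s),\qquad \tilde\beta(s,x,e):=\beta(s,x,m_s,\alpha_s,e).
\]
These maps are $\mathcal P(\F^P)\otimes\mathcal B(\R^d)$-measurable, with an extra $\mathcal B(E)$ factor for $\tilde\beta$. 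By \eqref{eq:FLip} they are $L_F$-Lipschitz in $x$ uniformly in $(s,\omega)$, and by \eqref{eq:FLip}--\eqref{eq:Fzero} they satisfy
\[
|\tilde b(s,x)|+\|\tilde\sigma(s,x)\|+\|\tilde\beta(s,x,\cdot)\|_{L_\Pi^2}\le C_F+L_F\bigl(|x|+W_2(m_s,\delta_0)+|\alpha_s|\bigr),
\]
where $W_2(m_s,\delta_0)=M_2(m_s)$. Since $m$ is continuous in $W_2$, the map $s\mapsto M_2(m_s)$ is bounded on $[0,T]$. Together with $\E^P\int_0^T|\alpha_s|^2ds<\infty$, this shows that the inhomogeneous terms are square integrable.

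For $V\in\mathbb S^2(P;\R^d)$, let $\Psi(V)$ denote the right-hand side of \eqref{eq:FWD}, with $V_{s-}$ in place of $X_{s-}$ and $m_s$ in place of the law. The integrands are predictable and square integrable, so Lemma~\ref{lem:mart} shows that $\Psi(V)\in\mathbb S^2(P;\R^d)$ and is càdlàg adapted. The difference estimate in the proof of Lemma~\ref{lem:Fstab} uses only the Lipschitz bound, the Cauchy--Schwarz inequality for the drift, and \eqref{eq:BDGB}--\eqref{eq:BDGN}. Applied with the same $m$, $\xi$ and $\alpha$, it gives
\[
\E^P\sup_{r\le t}|\Psi(V)_r-\Psi(V')_r|^2\le C\int_0^t\E^P\sup_{u\le s}|V_u-V'_u|^2ds.
\]
Iterating this inequality yields $\|\Psi^n(V)-\Psi^n(V')\|^2\le (CT)^n/n!\,\|V-V'\|^2$. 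Hence some power of $\Psi$ is a contraction, and $\Psi$ has a unique fixed point $X^m$ in the complete space $\mathbb S^2(P;\R^d)$; this is existence.

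Uniqueness among all $\mathbb S^2$ solutions follows from Lemma~\ref{lem:Fstab}, applied with identical data, since the right-hand side of \eqref{eq:Fstab} then vanishes. For the moment bound, I would estimate $\E^P\sup_{r\le t}|X^m_r|^2$ directly by splitting the equation into four terms as in that proof. The growth bound above, combined with Lemma~\ref{lem:mart}, gives
\[
\E^P\sup_{r\le t}|X^m_r|^2\le C\Bigl[1+\E^P|\xi|^2+\E^P\int_0^T|\alpha_s|^2ds+\int_0^TM_2(m_s)^2ds\Bigr]+C\int_0^t\E^P\sup_{u\le s}|X^m_u|^2ds.
\]
The Gronwall form \eqref{eq:GronwallExplicit} then yields the stated bound, because $X^m\in\mathbb S^2$ makes the left-hand side finite.

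The steps are routine, and the only real care lies in two points. First, the constants must be independent of $P$. This holds because the stochastic integrals are controlled by Lemma~\ref{lem:mart}, using $a^P\le\overline a$ and the common compensator $\Pi(de)\,dt$. Second, the integrands are measurable and predictable, which relies on $\alpha$ being predictable and the coefficients being jointly measurable. Using left limits $V_{s-}$ keeps the integrands predictable, so the fixed point is a genuine strong solution adapted to $\F^P$.
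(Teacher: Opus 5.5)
Your proof is correct and follows essentially the paper's route. Both arguments run a Picard iteration in $\mathbb S^2(P;\R^d)$ based on the Lipschitz difference estimate with factorial decay, prove uniqueness by Gronwall, and obtain the moment bound from the growth estimate together with \eqref{eq:GronwallExplicit}. The only difference is cosmetic: you conclude with the Banach fixed-point theorem for a contractive iterate of $\Psi$, whereas the paper sums the consecutive Picard differences and passes to the limit term by term in the equation.
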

\begin{proof}
Fix $P$ and the deterministic flow $m$. Set $X_t^0\equiv\xi$ and, for $n\ge0$, define
\begin{align*}
X_t^{n+1}={}&\xi+
\int_0^t b(s,X_{s-}^n,m_s,\alpha_s)ds
+\int_0^t\sigma(s,X_{s-}^n,m_s,\alpha_s)dB_s\\
&+\int_0^t\int_E
\beta(s,X_{s-}^n,m_s,\alpha_s,e)\widetilde N^P(ds,de).
\end{align*}
We first verify that every Picard iterate belongs to $\mathbb S^2(P;\R^d)$. Assume $X^n\in\mathbb S^2(P;\R^d)$ and write
\[
b_s^n=b(s,X_{s-}^n,m_s,\alpha_s),\qquad
\sigma_s^n=\sigma(s,X_{s-}^n,m_s,\alpha_s),
\]
\[
\beta_s^n(e)=\beta(s,X_{s-}^n,m_s,\alpha_s,e).
\]
Using $(a+b+c+d)^2\le4(a^2+b^2+c^2+d^2)$, Cauchy--Schwarz for the drift, and Lemma~\ref{lem:mart} for the two martingales,
\begin{align*}
\E^P\sup_{r\le t}|X_r^{n+1}|^2
\le{}&4\E^P|\xi|^2
+4t\E^P\int_0^t|b_s^n|^2ds\\
&+16\E^P\int_0^t|\sigma_s^n|_{a_s^P}^2ds
+16\E^P\int_0^t\|\beta_s^n\|_\Pi^2ds.
\end{align*}
The bound $a_s^P\le\overline a$ implies $|\sigma_s^n|_{a_s^P}^2\le C\|\sigma_s^n\|^2$. By \eqref{eq:FLip}--\eqref{eq:Fzero},
\[
|b_s^n|^2+\|\sigma_s^n\|^2+\|\beta_s^n\|_\Pi^2
\le C\bigl(1+|X_{s-}^n|^2+M_2(m_s)^2+|\alpha_s|^2\bigr).
\]
Consequently,
\begin{align}
\E^P\sup_{r\le t}|X_r^{n+1}|^2
\le C\Bigg[&1+\E^P|\xi|^2
+\int_0^tM_2(m_s)^2ds
+\E^P\int_0^t|\alpha_s|^2ds\nonumber\\
&+\int_0^t\E^P\sup_{u\le s}|X_u^n|^2ds\Bigg].
\label{eq:frozen-iterate-S2}
\end{align}
The right-hand side is finite by the induction hypothesis, so $X^{n+1}\in\mathbb S^2(P;\R^d)$. Since $X_t^0=\xi$, the induction starts.

Consider now two consecutive iterates. Before introducing shorthand notation, subtracting the equations for $X^{n+1}$ and $X^n$ yields, for every $r\in[0,T]$, $P$-a.s.,
\begin{align*}
X_r^{n+1}-X_r^n
={}&\int_0^r\Bigl[b(s,X_{s-}^n,m_s,\alpha_s)
-b(s,X_{s-}^{n-1},m_s,\alpha_s)\Bigr]ds\\
&+\int_0^r\Bigl[\sigma(s,X_{s-}^n,m_s,\alpha_s)
-\sigma(s,X_{s-}^{n-1},m_s,\alpha_s)\Bigr]dB_s\\
&+\int_0^r\int_E\Bigl[\beta(s,X_{s-}^n,m_s,\alpha_s,e)
-\beta(s,X_{s-}^{n-1},m_s,\alpha_s,e)\Bigr]\widetilde N^P(ds,de).
\end{align*}
For the subsequent estimates, set
\[
\Delta_t^{n+1}:=X_t^{n+1}-X_t^n
\]
and
\begin{align*}
\Delta b_s^n&:=b(s,X_{s-}^n,m_s,\alpha_s)
-b(s,X_{s-}^{n-1},m_s,\alpha_s),\\
\Delta\sigma_s^n&:=\sigma(s,X_{s-}^n,m_s,\alpha_s)
-\sigma(s,X_{s-}^{n-1},m_s,\alpha_s),\\
\Delta\beta_s^n(e)&:=\beta(s,X_{s-}^n,m_s,\alpha_s,e)
-\beta(s,X_{s-}^{n-1},m_s,\alpha_s,e).
\end{align*}
Thus the preceding identity reads
\begin{align*}
\Delta_r^{n+1}
={}&\int_0^r\Delta b_s^n ds
+\int_0^r\Delta\sigma_s^n dB_s
+\int_0^r\int_E\Delta\beta_s^n(e)\widetilde N^P(ds,de).
\end{align*}
For $t\le T$, the same estimates give
\begin{align*}
\E^P\sup_{r\le t}|\Delta_r^{n+1}|^2
\le{}&3t\E^P\int_0^t|\Delta b_s^n|^2ds
+12\E^P\int_0^t|\Delta\sigma_s^n|_{a_s^P}^2ds\\
&+12\E^P\int_0^t\|\Delta\beta_s^n\|_\Pi^2ds.
\end{align*}
Because the flow $m$ and control $\alpha$ are the same at both iterates, \eqref{eq:FLip}, followed by $a_s^P\le\overline a$, gives
\[
|\Delta b_s^n|^2+|\Delta\sigma_s^n|_{a_s^P}^2
+\|\Delta\beta_s^n\|_\Pi^2
\le C|\Delta_s^n|^2.
\]
Thus, with
\[
q_{n+1}(t):=\E^P\sup_{r\le t}|\Delta_r^{n+1}|^2,
\]
we obtain
\begin{equation}
q_{n+1}(t)
\le C\int_0^t\E^P|\Delta_s^n|^2ds
\le C\int_0^t q_n(s)ds.
\label{eq:frozen-picard-rec}
\end{equation}
The first difference is square-integrable by \eqref{eq:frozen-iterate-S2}; in particular, $q_1(T)<\infty$. We claim by induction that
\begin{equation}
q_{n+1}(t)
\le q_1(T)\frac{(Ct)^n}{n!},
\qquad 0\le t\le T.
\label{eq:frozen-factorial}
\end{equation}
For $n=0$, this is immediate. If it holds at rank $n-1$, then \eqref{eq:frozen-picard-rec} gives
\[
q_{n+1}(t)
\le Cq_1(T)\int_0^t\frac{(Cs)^{n-1}}{(n-1)!}ds
=q_1(T)\frac{(Ct)^n}{n!}.
\]
Hence
\[
\sum_{n\ge0}\|X^{n+1}-X^n\|_{\mathbb S^2(P)}
\le q_1(T)^{1/2}
\sum_{n\ge0}\frac{(CT)^{n/2}}{\sqrt{n!}}<\infty.
\]
The numerical series converges by the ratio test. Since $\mathbb S^2(P;\R^d)$ is complete, $(X^n)$ converges to some $X^m\in\mathbb S^2(P;\R^d)$. For the limit, set
\[
b_s^m=b(s,X_{s-}^m,m_s,\alpha_s),\qquad
\sigma_s^m=\sigma(s,X_{s-}^m,m_s,\alpha_s),
\]
\[
\beta_s^m(e)=\beta(s,X_{s-}^m,m_s,\alpha_s,e).
\]

It remains to pass to the limit in the equation. Lipschitz continuity yields
\begin{align*}
\E^P\int_0^T|b(s,X_{s-}^n,m_s,\alpha_s)-b(s,X_{s-}^m,m_s,\alpha_s)|^2ds
&\le C T\,\E^P\sup_{t\le T}|X_t^n-X_t^m|^2,\\
\E^P\int_0^T\|\sigma(s,X_{s-}^n,m_s,\alpha_s)-\sigma(s,X_{s-}^m,m_s,\alpha_s)\|^2ds
&\le C T\,\E^P\sup_{t\le T}|X_t^n-X_t^m|^2,\\
\E^P\int_0^T\|\beta(s,X_{s-}^n,m_s,\alpha_s,\cdot)-\beta(s,X_{s-}^m,m_s,\alpha_s,\cdot)\|_\Pi^2ds
&\le C T\,\E^P\sup_{t\le T}|X_t^n-X_t^m|^2,
\end{align*}
which tend to zero. For the drift,
\[
\E^P\sup_{t\le T}\left|\int_0^t(b_s^n-b_s^m)ds\right|^2
\le T\E^P\int_0^T|b_s^n-b_s^m|^2ds\longrightarrow0.
\]
Lemma~\ref{lem:mart} similarly gives
\[
\E^P\sup_{t\le T}\left|\int_0^t(\sigma_s^n-\sigma_s^m)dB_s\right|^2\longrightarrow0,
\]
and
\[
\E^P\sup_{t\le T}\left|\int_0^t\int_E
(\beta_s^n(e)-\beta_s^m(e))\widetilde N^P(ds,de)\right|^2\longrightarrow0.
\]
Passing to the limit in the Picard identity shows that $X^m$ solves the frozen-law equation.

If $X$ and $\bar X$ are two solutions, the preceding difference estimate gives
\[
\E^P\sup_{r\le t}|X_r-\bar X_r|^2
\le C\int_0^t\E^P\sup_{u\le s}|X_u-\bar X_u|^2ds.
\]
Gronwall's inequality implies $X=\bar X$ up to indistinguishability.

Finally, apply directly to $X^m$ the estimate leading to \eqref{eq:frozen-iterate-S2}. We obtain
\begin{align*}
\E^P\sup_{r\le t}|X_r^m|^2
\le C\Bigg[&1+\E^P|\xi|^2
+\E^P\int_0^t|\alpha_s|^2ds
+\int_0^tM_2(m_s)^2ds\\
&+\int_0^t\E^P\sup_{u\le s}|X_u^m|^2ds\Bigg].
\end{align*}
Gronwall's inequality yields
\[
\E^P\sup_{t\le T}|X_t^m|^2
\le C\left[1+\E^P|\xi|^2+
\E^P\int_0^T|\alpha_t|^2dt+
\int_0^TM_2(m_t)^2dt\right].
\]
The constants are uniform in $P$ because only the common structural bounds on $a^P$, the coefficients, and the compensator $\Pi(de)dt$ were used.
\end{proof}

\begin{theorem}[Well-posedness and moments of the forward equation]\label{thm:Fwell}
Let $\alpha\in\mathcal A^2$. Under Assumptions~\ref{HF} and~\ref{HD}, for every $P\in\Pcal$, equation \eqref{eq:FWD} admits a unique strong c\`adl\`ag solution $X^P\in\mathbb S^2(P;\R^d)$. If Assumption~\ref{HqJ} also holds, then
\begin{equation}
\sup_{P\in\Pcal}\E^P\sup_{t\le T}|X_t^P|^q
\le C\left(1+\sup_{P\in\Pcal}\E^P|\xi|^q
+\sup_{P\in\Pcal}\E^P\int_0^T|\alpha_t|^qdt\right).
\label{eq:Fq}
\end{equation}
\end{theorem}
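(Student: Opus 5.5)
We prove existence and uniqueness by a contraction on the law flow, built on the frozen-law construction of Lemma~\ref{lem:frozenfull} and the stability estimate of Lemma~\ref{lem:Fstab}. The $q$-th moment bound \eqref{eq:Fq} then comes from Lemma~\ref{lem:martp} and Gronwall.

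\textbf{Step 1: the map on law flows.} Fix $P$ and let $\mathcal C:=C([0,T];\mathcal P_2(\R^d))$. For $m\in\mathcal C$, let $X^m$ be the solution given by Lemma~\ref{lem:frozenfull}, and define $\Psi(m)_t:=\Lcal^P(X_t^m)$.

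We must check that $\Psi(m)\in\mathcal C$. Second moments are finite by the bound in Lemma~\ref{lem:frozenfull}. For continuity, by \eqref{eq:coupling} it suffices that $\E^P|X_t^m-X_s^m|^2\to0$ as $s\to t$. This follows from the integral form of the equation:
\begin{itemize}
\item the drift term is controlled by Cauchy--Schwarz;
\item the two martingale increments are controlled by the isometries of Lemma~\ref{lem:mart}, which give integrals over $[s,t]$ of integrable functions.
\end{itemize}
This gives $L^2$-continuity even at jump times, because a fixed time is a.s.\ not a jump time of $N$ (the compensator $\Pi(de)dt$ has no atoms in time). It also shows $\mu_{t-}=\mu_t$.

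\textbf{Step 2: contraction.} For $m,\bar m\in\mathcal C$, apply Lemma~\ref{lem:Fstab} with the same $\xi$ and $\alpha$, together with \eqref{eq:coupling}:
\[
W_2^2(\Psi(m)_t,\Psi(\bar m)_t)\le C\int_0^tW_2^2(m_s,\bar m_s)\,ds .
\]
Iterating this inequality gives
\[
\sup_t W_2^2(\Psi^k(m)_t,\Psi^k(\bar m)_t)\le \frac{(CT)^k}{k!}\sup_tW_2^2(m_t,\bar m_t).
\]
Equivalently, $\Psi$ is a strict contraction for the weighted metric $\sup_t e^{-2Ct}W_2(m_t,\bar m_t)$. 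Since $\mathcal C$ is complete under the uniform $W_2$ metric, $\Psi$ has a unique fixed point $\mu$. Then $X^P:=X^\mu$ solves \eqref{eq:FWD}.

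Uniqueness follows the same way. Any solution $X$ in $\mathbb S^2(P;\R^d)$ has a law flow $\Lcal^P(X_t)$ in $\mathcal C$ by the argument of Step~1. That flow is therefore a fixed point of $\Psi$, and uniqueness of the frozen equation identifies $X$ with $X^\mu$. The process is a strong solution because the Picard scheme in Lemma~\ref{lem:frozenfull} uses only $(\xi,B,N)$ and the raw-predictable control.

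\textbf{Step 3: moments.} Under \ref{HqJ}, apply Lemma~\ref{lem:martp} with $p=q$ to the solution, first localized by stopping times $\tau_n$ so that all quantities are finite. The continuous part gives
\[
\E\Bigl(\int_0^t\|\sigma_s\|^2ds\Bigr)^{q/2}\le C\,\E\int_0^t\|\sigma_s\|^q\,ds
\]
by Hölder, with the linear growth of $\sigma$ obtained from \eqref{eq:FLip}--\eqref{eq:Fzero}. The Poisson part needs two terms:
\begin{itemize}
\item $\E(\int_0^t\|\beta_s\|_\Pi^2ds)^{q/2}$, handled through the $L^2_\Pi$ linear growth;
\item $\E\int_0^t\|\beta_s\|_{L^q_\Pi}^q\,ds$, handled by \eqref{eq:betaq}.
\end{itemize}
For the law terms, use $M_2(\mu_s)^q\le M_q(\mu_s)^q=\E|X_s|^q$ and $W_2(\mu_s,\delta_0)=M_2(\mu_s)$. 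Altogether,
\[
\E\sup_{r\le t\wedge\tau_n}|X_r|^q\le C\Bigl(1+\E|\xi|^q+\E\int_0^T|\alpha|^q+\int_0^t\E\sup_{u\le s}|X_u|^q\,ds\Bigr).
\]

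\textbf{Main obstacle.} The law term $\E|X_s|^q$ is not itself localized, so the bootstrap needs care. I would first establish finiteness of the $q$-th moment for the Picard iterates of the frozen equation with $m=\mu$. These iterates inherit finite $q$-moments inductively, with uniform factorial bounds as in \eqref{eq:frozen-factorial}. Then I pass to the limit by Fatou, which avoids localization entirely, and apply Gronwall. All constants depend only on $\underline a,\overline a$, $\Pi$ and the structural constants, so they are uniform in $P$. Taking $\sup_P$ yields \eqref{eq:Fq}.
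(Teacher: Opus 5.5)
Your Steps~1 and~2 agree with the paper. The paper also uses the map $m\mapsto\Lcal^P(X^m)$ on $C([0,T];\mathcal P_2(\R^d))$, a contraction in an exponentially weighted $W_2$ metric via Lemma~\ref{lem:Fstab}, and uniqueness by noting that the law flow of any $\mathbb S^2$ solution is a fixed point.

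\textbf{The gap is in Step~3, exactly at the obstacle you flag.} Your remedy is to establish finite $q$th moments along the Picard iterates of the frozen equation with $m=\mu$, and this is circular.
\begin{itemize}
\item The drift, diffusion and $L^2_\Pi$ jump terms involve the law only through $M_2(\mu_s)$, which is finite. The term $\int_0^t\|\beta_s\|_{L^q_\Pi}^q\,ds$ in Lemma~\ref{lem:martp} is different: its only available control is \eqref{eq:betaq}.
\item \eqref{eq:betaq} bounds it by $C(1+|x|^q+M_q(\mu_s)^q+|a|^q)$, and is asserted only for $\mu\in\mathcal P_q(\R^d)$. But $M_q(\mu_s)^q=\E^P|X^P_s|^q$ is precisely the quantity whose finiteness is unknown.
\item So bounding the $q$th moment of even the first iterate $X^1$ already requires $\int_0^TM_q(\mu_s)^q\,ds<\infty$, and the induction never starts.
\item Localizing by stopping times does not help, because the law term is a deterministic functional of the unlocalized process.
\item Separately, factorial bounds ``as in \eqref{eq:frozen-factorial}'' in $L^q$ would need $\beta$ to be Lipschitz in $L^q_\Pi$. \eqref{eq:FLip} only provides this in $L^2_\Pi$.
\end{itemize}

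\textbf{The missing idea is to run the bootstrap along the McKean--Vlasov iteration on law flows, not at the unknown fixed point.} This is what the paper does.
\begin{enumerate}
\item Set $m^0_t\equiv\Lcal^P(\xi)\in\mathcal P_q$, $X^{n+1}:=X^{m^n}$ and $m^{n+1}_t:=\Lcal^P(X^{n+1}_t)$.
\item If $\sup_tM_q(m^n_t)<\infty$, the law term for $X^{n+1}$ is a known, finite, deterministic function. Localize only $X^{n+1}$ by $\tau_R$, apply Gronwall, and let $R\uparrow\infty$ using Fatou. This gives $u_{n+1}(t):=\E^P\sup_{s\le t}|X^{n+1}_s|^q\le C_qA_P+C_q\int_0^tM_q(m^n_s)^q\,ds$, with $A_P=1+\E^P|\xi|^q+\E^P\int_0^T|\alpha_t|^q\,dt$.
\item Since $M_q(m^n_s)^q\le u_n(s)$, iterate $u_{n+1}\le C_qA_P+C_q\int_0^tu_n$ to get a bound uniform in $n$. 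This uses only the growth conditions, with no $L^q$ Lipschitz estimate.
\item Your contraction gives $m^n\to\mu$ uniformly in $W_2$. Lemma~\ref{lem:Fstab} then gives $X^{n+1}\to X^P$ in $\mathbb S^2(P;\R^d)$.
\item Fatou along an a.s.\ convergent subsequence transfers the bound to $X^P$, which yields \eqref{eq:Fq} directly, with no final Gronwall argument on $X^P$ itself.
\end{enumerate}
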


\begin{proof}
\noindent\emph{Step 1: frozen-law equation and fixed point on the law flow.}
Fix $P$. For $m\in C([0,T];\mathcal P_2(\R^d))$, Lemma~\ref{lem:frozenfull} constructs a unique solution $X^m$. Define
\[
\Phi_P(m)_t:=\Lcal^P(X_t^m).
\]
For $0\le s<t\le T$, stochastic isometries and linear growth give
\[
\E^P|X_t^m-X_s^m|^2
\le C\E^P\int_s^t\{1+|X_r^m|^2+M_2(m_r)^2+|\alpha_r|^2\}dr,
\]
which tends to zero as $t-s\to0$. The canonical coupling then yields
\[
W_2^2(\Phi_P(m)_t,\Phi_P(m)_s)
\le\E^P|X_t^m-X_s^m|^2\to0;
\]
hence $\Phi_P(m)\in C([0,T];\mathcal P_2(\R^d))$.

Equip this space with
\[
d_\vartheta(m,\bar m)^2:=\sup_{t\le T}e^{-\vartheta t}W_2^2(m_t,\bar m_t),
\qquad \vartheta>0.
\]
This metric is equivalent to the uniform $W_2$ metric, and the space is complete. For two flows $m,\bar m$, Lemma~\ref{lem:Fstab}, with the same initial condition and the same control, gives
\[
\E^P\sup_{r\le t}|X_r^m-X_r^{\bar m}|^2
\le C\int_0^tW_2^2(m_s,\bar m_s)ds.
\]
By the natural coupling,
\begin{align*}
e^{-\vartheta t}W_2^2(\Phi_P(m)_t,\Phi_P(\bar m)_t)
&\le Ce^{-\vartheta t}\int_0^tW_2^2(m_s,\bar m_s)ds\\
&\le C d_\vartheta(m,\bar m)^2\int_0^te^{-\vartheta(t-s)}ds\\
&\le \frac{C}{\vartheta}d_\vartheta(m,\bar m)^2.
\end{align*}
Taking the supremum over $t$,
\[
d_\vartheta(\Phi_P(m),\Phi_P(\bar m))^2
\le\frac{C}{\vartheta}d_\vartheta(m,\bar m)^2.
\]
Choose $\vartheta>C$. Then $\Phi_P$ is a contraction, and Banach's fixed-point theorem gives a unique $\mu^P\in C([0,T];\mathcal P_2(\R^d))$ such that $\Phi_P(\mu^P)=\mu^P$. The process $X^P:=X^{\mu^P}$ satisfies $\mu_t^P=\Lcal^P(X_t^P)$ and solves \eqref{eq:FWD}. If $X$ and $\bar X$ are two McKean--Vlasov solutions, their law flows are fixed points of $\Phi_P$ and therefore coincide; uniqueness for the frozen-law equation then gives $X=\bar X$. The solution is strong because every Picard iterate of the frozen-law equation, and hence its limit, is adapted to the filtration generated by $(\xi,B,N)$.

\noindent\emph{Step 2: uniform quadratic estimate.}
The a priori estimate from Lemma~\ref{lem:frozenfull}, applied at the fixed point $m=\mu^P$, yields
\begin{align*}
\E^P\sup_{r\le t}|X_r^P|^2
\le{}&C\left(1+\E^P|\xi|^2+\E^P\int_0^t|\alpha_s|^2ds
+\int_0^tM_2(\mu_s^P)^2ds\right)\\
&+C\int_0^t\E^P\sup_{u\le s}|X_u^P|^2ds.
\end{align*}
Since
\[
M_2(\mu_s^P)^2=\E^P|X_s^P|^2
\le\E^P\sup_{u\le s}|X_u^P|^2,
\]
this term can be absorbed into the Gronwall integral, giving
\[
\E^P\sup_{r\le t}|X_r^P|^2
\le C\left(1+\E^P|\xi|^2+\E^P\int_0^T|\alpha_s|^2ds\right)
+C\int_0^t\E^P\sup_{u\le s}|X_u^P|^2ds.
\]
Gronwall's inequality followed by the supremum over $P$ gives
\begin{equation}
\sup_P\E^P\sup_{t\le T}|X_t^P|^2
\le C\left(1+\sup_P\E^P|\xi|^2+
\sup_P\E^P\int_0^T|\alpha_t|^2dt\right).
\label{eq:Fsecond}
\end{equation}
The constant is uniform in $P$ because all constants used in Lemma~\ref{lem:frozenfull} are structural and common to the family.

\noindent\emph{Step 3: moments of order $q$.}
Fix $q>4$ as in Assumption~\ref{HqJ}. Before applying Gronwall to the limiting process, the finiteness of its $q$th moment must first be justified. We therefore return to the fixed-point iteration on law flows.

Set
\[
m_t^0:=\Lcal^P(\xi),\qquad 0\le t\le T.
\]
This flow is constant and belongs to $C([0,T];\mathcal P_q(\R^d))$ because $\E^P|\xi|^q<\infty$. For $n\ge0$, define recursively
\[
X^{n+1}:=X^{m^n},\qquad
m_t^{n+1}:=\Lcal^P(X_t^{n+1}),
\]
where $X^{m^n}$ is the frozen-law solution constructed in Lemma~\ref{lem:frozenfull}.

We prove by induction that each $X^{n+1}$ has a uniform $q$th moment. Assume
\[
\sup_{t\le T}M_q(m_t^n)^q<\infty.
\]
For $R\ge1$, set
\[
\tau_R^{n+1}:=\inf\{t\ge0:|X_t^{n+1}|\ge R\}\wedge T,
\qquad
u_{n+1,R}(t):=\E^P\sup_{s\le t}|X_{s\wedge\tau_R^{n+1}}^{n+1}|^q.
\]
On $[0,\tau_R^{n+1}]$, the coefficients have the required integrability. Applying H\"older's inequality to the drift, BDG to the continuous martingale, Lemma~\ref{lem:martp} to the Poisson martingale, and then the growth assumptions \ref{HF} and \ref{HqJ}, we obtain
\begin{align}
u_{n+1,R}(t)
\le{}&C_q\left(1+\E^P|\xi|^q+
\E^P\int_0^T|\alpha_s|^qds\right)\nonumber\\
&+C_q\int_0^t u_{n+1,R}(s)\,ds
+C_q\int_0^t M_q(m_s^n)^q\,ds .
\label{eq:Fq-frozen-local}
\end{align}
The constant $C_q$ is independent of $R$, $n$, and $P$. The continuous part is controlled by the common upper bound $a^P\le\overline a$, while the jump part uses the common compensator $\Pi(de)dt$.

Applying Gronwall to \eqref{eq:Fq-frozen-local} yields
\begin{equation}
u_{n+1,R}(t)
\le C_q A_P+C_q\int_0^tM_q(m_s^n)^q\,ds,
\qquad
A_P:=1+\E^P|\xi|^q+\E^P\int_0^T|\alpha_s|^qds,
\label{eq:Fq-frozen-G}
\end{equation}
after modifying $C_q$ by a factor depending only on $T$. Letting $R\uparrow\infty$ and using Fatou's lemma gives
\begin{equation}
u_{n+1}(t):=\E^P\sup_{s\le t}|X_s^{n+1}|^q
\le C_q A_P+C_q\int_0^tM_q(m_s^n)^q\,ds.
\label{eq:Fq-frozen-global}
\end{equation}

For $n\ge1$,
\[
M_q(m_s^n)^q=\E^P|X_s^n|^q\le u_n(s),
\]
and for $n=0$,
\[
M_q(m_s^0)^q=\E^P|\xi|^q\le A_P.
\]
Thus \eqref{eq:Fq-frozen-global} implies
\[
u_{n+1}(t)\le C_qA_P+C_q\int_0^tu_n(s)\,ds.
\]
Iterating this inequality gives
\begin{align*}
u_{n+1}(t)
&\le C_qA_P\sum_{j=0}^{n}\frac{(C_qt)^j}{j!}
+A_P\frac{(C_qt)^{n+1}}{(n+1)!}\\
&\le C_qA_Pe^{C_qT},
\qquad 0\le t\le T.
\end{align*}
Consequently,
\begin{equation}
\sup_{n\ge1}\E^P\sup_{t\le T}|X_t^n|^q
\le C_q\left(1+\E^P|\xi|^q+
\E^P\int_0^T|\alpha_t|^qdt\right).
\label{eq:Fq-iter-uniform}
\end{equation}

It remains to transfer this estimate to the fixed point $\mu^P$. Step~1 shows that $m^n=\Phi_P^n(m^0)$ converges to $\mu^P$ in $d_\vartheta$, hence uniformly in time in $W_2$. Lemma~\ref{lem:Fstab}, applied to the equations frozen at $m^n$ and $\mu^P$, gives
\[
\E^P\sup_{t\le T}|X_t^{n+1}-X_t^P|^2
\le C\int_0^TW_2^2(m_t^n,\mu_t^P)\,dt
\longrightarrow0.
\]
We can therefore extract a subsequence $(n_j)$ such that
\[
\sup_{t\le T}|X_t^{n_j+1}-X_t^P|\longrightarrow0
\qquad P\text{-a.s.}
\]
By Fatou's lemma and \eqref{eq:Fq-iter-uniform},
\[
\E^P\sup_{t\le T}|X_t^P|^q
\le\liminf_{j\to\infty}
\E^P\sup_{t\le T}|X_t^{n_j+1}|^q
\le C_qA_P.
\]
All constants are common to $P\in\Pcal$; taking the supremum over $P$ proves \eqref{eq:Fq}.

To complete the uniqueness argument in Step~1, note that every solution $X\in\mathbb S^2(P;\R^d)$ has a $W_2$-continuous law flow: the increment estimate used for $\Phi_P(m)$ applies directly to $X$. Thus the law flow of any solution in the stated class is indeed a fixed point of $\Phi_P$, which completes the uniqueness argument.
\end{proof}

\begin{remark}[Role of higher moments]
Assumption~\ref{HqJ} is not needed for existence. It only provides the higher moments used in the Yosida estimates.
\end{remark}

\subsection{Existence and uniqueness of the backward equation with jumps}

Well-posedness under discontinuous martingales is studied in the preprint by Papapantoleon, Saplaouras, and Theodorakopoulos \cite[Section~4]{PapapantoleonEtAl2024}. For fixed $P$ and $X^P$, set
\begin{align}
F^P(t,\omega,y,z,u,\eta)
&=f(t,X_{t-}^P(\omega),\mu_{t-}^P,y,z,u,\eta,\alpha_t(\omega))
+\chi_t^P(\omega),\nonumber\\
\chi_t^P&=\langle\varrho,A^0(X_{t-}^P,\mu_{t-}^P,\alpha_t)\rangle.
\label{eq:exogenousgenerator}
\end{align}
For $v=(y,z,u,\eta)$ and $v'=(y',z',u',\eta')$, Assumption~\ref{HB} gives
\[
|F^P(t,v)-F^P(t,v')|
\le L_B\{|y-y'|+|z-z'|+\|u-u'\|_\Pi+W_2(\eta,\eta')\}.
\]
The source $\chi^P$ disappears from this difference: once $X^P$ is fixed, it is exogenous to the backward Picard iteration.

\begin{theorem}[Existence and uniqueness of the BSDE with jumps along the forward process]\label{thm:Bwell}
Let $\alpha\in\mathcal A^2$. Assume \ref{HF}, \ref{HB}, \ref{HD}, and the representation property \eqref{eq:PRP}. Fix $P\in\Pcal$, let $X^P$ be the solution from Theorem~\ref{thm:Fwell}, and set $\mu_t^P=\Lcal^P(X_t^P)$,
\[
\chi_t^P=\langle\varrho,A^0(X_{t-}^P,\mu_{t-}^P,\alpha_t)\rangle,
\qquad \mathsf K_t^P=\int_0^t\chi_s^Pds.
\]
Then there exists a unique triplet
\[
(Y^P,Z^P,U^P)\in\mathbb S^2(P)\times\mathbb H_B^2(P)\times\mathbb H_\Pi^2(P)
\]
such that, for every $t\in[0,T]$, $P$-a.s.,
\begin{align}
Y_t^P={}&g(X_T^P,\mu_T^P)+\int_t^T
f(s,X_{s-}^P,\mu_{s-}^P,Y_s^P,Z_s^P,U_s^P,
\Lcal^P(Y_s^P),\alpha_s)ds\nonumber\\
&+\mathsf K_T^P-\mathsf K_t^P-
\int_t^TZ_s^PdB_s-\int_t^T\int_EU_s^P(e)\widetilde N^P(ds,de).
\label{eq:Bexist}
\end{align}
Moreover,
\begin{align}
\sup_{P\in\Pcal}\E^P\Bigl[\sup_{t\le T}|Y_t^P|^2+
\sup_{t\le T}|\mathsf K_t^P|^2+
\int_0^T\{|Z_t^P|_{a_t^P}^2+\|U_t^P\|_\Pi^2\}dt\Bigr]<\infty.
\label{eq:Bexistbound}
\end{align}
Uniqueness of $Y^P$ is understood up to indistinguishability, and uniqueness of $Z^P,U^P$ in their energy norms.
\end{theorem}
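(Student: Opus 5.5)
The approach is a Picard/contraction argument on the product space $\mathbb S^2(P)\times\mathbb H_B^2(P)\times\mathbb H_\Pi^2(P)$ with an exponentially weighted norm. The forward process $X^P$ and its law flow are frozen throughout, so the source $\chi^P$ enters as an exogenous term.

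\textbf{Integrability of the exogenous data.} Theorem~\ref{thm:Fwell} gives \eqref{eq:Fsecond}. By \eqref{eq:gLip} and \eqref{eq:coupling}, the terminal value satisfies $\E^P|g(X_T^P,\mu_T^P)|^2\le C(1+\E^P\sup_t|X_t^P|^2)$. By the growth condition in \ref{HD},
\[
\E^P\int_0^T|\chi_t^P|^2dt\le C|\varrho|^2\bigl(1+\E^P\sup_t|X_t^P|^2+\E^P\int_0^T|\alpha_t|^2dt\bigr).
\]
Cauchy--Schwarz then bounds $\E^P\sup_t|\mathsf K_t^P|^2$ by $T$ times the same quantity. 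The driving term $f(t,X_{t-}^P,\mu_{t-}^P,0,0,0,\delta_0,\alpha_t)$ is controlled the same way through \eqref{eq:fLip}--\eqref{eq:fzero}. All these bounds are uniform in $P$ because $\alpha\in\mathcal A^2$ and $\sup_P\E^P|\xi|^2<\infty$.

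\textbf{The Picard map.} Given $(y,z,u)$ in the product space, I set $\eta_s=\Lcal^P(y_s)$, which is a Borel flow by Lemma~\ref{lem:lawmeas}. Consider the square-integrable martingale
\[
M_t=\E^P\Bigl[g(X_T^P,\mu_T^P)+\int_0^T F^P(s,y_s,z_s,u_s,\eta_s)ds\,\Big|\,\mathcal F_t^P\Bigr].
\]
I represent it via \eqref{eq:PRP} to obtain $(Z,U)$, and define $Y_t=M_t-\int_0^tF^P(s,\dots)ds$. Lemma~\ref{lem:mart} gives $Y\in\mathbb S^2(P)$, and it also gives that $Z,U$ have finite energy norms, with $Y$ càdlàg. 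Fixed points of this map are exactly the solutions of \eqref{eq:Bexist}.

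\textbf{Contraction.} For two inputs, apply Itô's formula to $e^{\beta t}|\delta Y_t|^2$. The jumps of $\delta Y$ come only from the Poisson integral, and the bracket splits as in Lemma~\ref{lem:mart}. Taking expectations, the local martingales vanish; this is justified by a BDG/localization step using the $\mathbb S^2$ bounds. One obtains
\[
\E^P\!\int_0^T e^{\beta s}\bigl(\beta|\delta Y_s|^2+|\delta Z_s|^2_{a_s^P}+\|\delta U_s\|_\Pi^2\bigr)ds\le 2\E^P\!\int_0^Te^{\beta s}|\delta Y_s||\delta F_s|ds .
\]
By the Lipschitz property of $F^P$ stated before the theorem, $|\delta F_s|\le L_B(|\delta y_s|+|\delta z_s|+\|\delta u_s\|_\Pi+W_2(\eta_s,\bar\eta_s))$. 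I bound $W_2^2(\eta_s,\bar\eta_s)\le\E^P|\delta y_s|^2$ by \eqref{eq:coupling}. Young's inequality and $|z|\le C|z|_{a^P}$ (from $a^P\ge\underline a$) then make the map a strict contraction in the weighted $\mathbb H^2$-norm for $\beta$ large, depending only on $L_B$, $\underline a$ and $T$. The $\mathbb S^2$ component is recovered afterwards by BDG via Lemma~\ref{lem:mart}. The fixed point yields existence and uniqueness, including indistinguishability of $Y$ and uniqueness of $Z,U$ in the energy norms.

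\textbf{Uniform bound \eqref{eq:Bexistbound}.} I expect the main care to be needed here: all constants must stay independent of $P$. The plan is to rerun the same Itô estimate with $(\bar y,\bar z,\bar u)=0$ and keep the exogenous terms, using $W_2(\eta_s,\delta_0)=M_2(\eta_s)\le(\E^P|Y_s|^2)^{1/2}$. This gives
\[
\E^P\Bigl[\sup_t|Y_t|^2+\int_0^T(|Z|_{a^P}^2+\|U\|_\Pi^2)\Bigr]\le C\Bigl(\E^P|g|^2+\E^P\!\int_0^T(|f^0_s|^2+|\chi_s^P|^2)ds\Bigr).
\]
Here the constant depends only on $L_B$, $\underline a$, $\overline a$, $T$ and the common compensator, which Lemma~\ref{lem:mart} guarantees. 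Combined with the first step and the $\mathsf K$ bound, taking $\sup_P$ gives \eqref{eq:Bexistbound}.
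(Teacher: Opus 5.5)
Your proposal is correct and follows essentially the same route as the paper. The paper also builds the Picard step from the martingale representation \eqref{eq:PRP}, contracts via the weighted It\^o estimate on $e^{\gamma t}|\delta Y_t|^2$, handles the law term by the coupling bound and $|z|^2\le\lambda_{\min}(\underline a)^{-1}|z|_{a^P}^2$, recovers the $\mathbb S^2(P)$ norm by Doob/BDG, and reruns the It\^o estimate with the exogenous data $(g,f^0,\chi^P)$ to get the $P$-uniform bound \eqref{eq:Bexistbound}. The only difference is presentational: you invoke Banach's theorem in the weighted norm (which implicitly works on $L^2(dt\otimes P)$, since $\mathbb S^2$ is not complete under that norm), whereas the paper iterates explicitly, proves the Cauchy property directly in $\mathbb S^2(P)$, and proves uniqueness in a separate step.
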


\begin{proof}
Fix $P\in\Pcal$ and, throughout this proof, write
\[
X=X^P,\qquad \mu=\mu^P,\qquad
\zeta=g(X_T,\mu_T),\qquad
\chi_t=\langle\varrho,A^0(X_{t-},\mu_{t-},\alpha_t)\rangle.
\]
The process $X$ and the source $\chi$ remain fixed throughout the backward iteration.

\medskip
\noindent\emph{Step 1: integrability of the backward data.}
By \eqref{eq:gLip},
\begin{align*}
|\zeta|
&=|g(X_T,\mu_T)|\\
&\le |g(0,\delta_0)|
+L_B\bigl(|X_T|+W_2(\mu_T,\delta_0)\bigr)\\
&\le C_B+L_B\bigl(|X_T|+M_2(\mu_T)\bigr).
\end{align*}
Since $M_2(\mu_T)^2=\E^P|X_T|^2$ is deterministic under the fixed model,
\begin{align}
\E^P|\zeta|^2
&\le 3C_B^2+3L_B^2\E^P|X_T|^2
+3L_B^2M_2(\mu_T)^2\nonumber\\
&\le C\left(1+\E^P\sup_{t\le T}|X_t|^2\right)<\infty.
\label{eq:Bpicard-zeta}
\end{align}

Next set
\[
f_t^0:=f(t,X_{t-},\mu_{t-},0,0,0,\delta_0,\alpha_t).
\]
Comparing $f_t^0$ with $f(t,0,\delta_0,0,0,0,\delta_0,0)$ and using \eqref{eq:fLip}--\eqref{eq:fzero},
\begin{align*}
|f_t^0|
&\le C_B+L_B\bigl(|X_{t-}|+W_2(\mu_{t-},\delta_0)+|\alpha_t|\bigr)\\
&= C_B+L_B\bigl(|X_{t-}|+M_2(\mu_{t-})+|\alpha_t|\bigr).
\end{align*}
By $(a+b+c+d)^2\le4(a^2+b^2+c^2+d^2)$,
\begin{align}
\E^P\int_0^T|f_t^0|^2dt
&\le C\left[T+\E^P\int_0^T|X_{t-}|^2dt
+\int_0^TM_2(\mu_{t-})^2dt
+\E^P\int_0^T|\alpha_t|^2dt\right]\nonumber\\
&\le C\left[1+\E^P\sup_{t\le T}|X_t|^2
+\E^P\int_0^T|\alpha_t|^2dt\right]<\infty,
\label{eq:Bpicard-f0}
\end{align}
where we used $M_2(\mu_t)^2=\E^P|X_t|^2\le\E^P\sup_{r\le T}|X_r|^2$.

Finally, by the definition of $h$ and the linear growth of $A^0$,
\begin{align*}
|\chi_t|^2
&\le |\varrho|^2|A^0(X_{t-},\mu_{t-},\alpha_t)|^2\\
&\le 3|\varrho|^2C_A^2\bigl(1+|X_{t-}|^2+|\alpha_t|^2\bigr),
\end{align*}
so that
\begin{align}
\E^P\int_0^T|\chi_t|^2dt
\le C\left[1+\E^P\sup_{t\le T}|X_t|^2
+\E^P\int_0^T|\alpha_t|^2dt\right]<\infty.
\label{eq:Bpicard-h}
\end{align}
Combining \eqref{eq:Bpicard-zeta}, \eqref{eq:Bpicard-f0}, \eqref{eq:Bpicard-h}, and \eqref{eq:Fsecond}, we obtain
\begin{equation}
\E^P|\zeta|^2+
\E^P\int_0^T\bigl(|f_t^0|^2+|\chi_t|^2\bigr)dt\le C,
\label{eq:Bpicard-data}
\end{equation}
with $C$ independent of $P$. Set
\[
\ell_t:=|f_t^0|+|\chi_t|.
\]
Then
\begin{equation}
\E^P\int_0^T\ell_t^2dt
\le2\E^P\int_0^T\bigl(|f_t^0|^2+|\chi_t|^2\bigr)dt<\infty.
\label{eq:Bpicard-ell}
\end{equation}

\medskip
\noindent\emph{Step 2: definition of the Picard iteration.}
Initialize
\[
(Y^0,Z^0,U^0)=(0,0,0).
\]
Assume $(Y^n,Z^n,U^n)$ has been constructed and set
\begin{align}
G_t^n:={}&f(t,X_{t-},\mu_{t-},Y_t^n,Z_t^n,U_t^n,
\Lcal^P(Y_t^n),\alpha_t)+\chi_t.
\label{eq:Bpicard-Gn}
\end{align}
Lemma~\ref{lem:lawmeas} allows us to choose a Borel representative of the flow $t\mapsto\Lcal^P(Y_t^n)$; hence the generator above is well defined on equivalence classes in $L^2(dt\otimes P)$. Comparing with the free term $f_t^0$, the Lipschitz condition \ref{HB} gives
\begin{align*}
|G_t^n|
&\le \ell_t+L_B\Bigl(
|Y_t^n|+|Z_t^n|+\|U_t^n\|_\Pi
+W_2(\Lcal^P(Y_t^n),\delta_0)\Bigr).
\end{align*}
The lower ellipticity bound in \eqref{eq:qv} gives
\[
|Z_t^n|^2\le \lambda_{\min}(\underline a)^{-1}|Z_t^n|_{a_t^P}^2,
\]
and
\[
W_2^2(\Lcal^P(Y_t^n),\delta_0)=\E^P|Y_t^n|^2.
\]
Consequently, with $c_a:=\lambda_{\min}(\underline a)^{-1}$,
\begin{align}
\E^P\int_0^T|G_t^n|^2dt
\le C\Bigg(&\E^P\int_0^T\ell_t^2dt
+\E^P\int_0^T|Y_t^n|^2dt
+c_a\E^P\int_0^T|Z_t^n|_{a_t^P}^2dt\nonumber\\
&+\E^P\int_0^T\|U_t^n\|_\Pi^2dt
+\int_0^T\E^P|Y_t^n|^2dt\Bigg)<\infty.
\label{eq:Bpicard-Gn-L2}
\end{align}
Thus $G^n\in L^2(dt\otimes P)$. By Cauchy--Schwarz,
\begin{align}
\E^P\left|\int_0^TG_s^n ds\right|^2
\le T\E^P\int_0^T|G_s^n|^2ds<\infty,
\label{eq:Bpicard-timeint}
\end{align}
so
\[
\Xi^{n+1}:=\zeta+\int_0^TG_s^n ds\in L^2(P).
\]
Define the square-integrable martingale
\[
M_t^{n+1}:=\E^P[\Xi^{n+1}\mid\mathcal F_t^P].
\]
By Doob's inequality,
\begin{equation}
\E^P\sup_{t\le T}|M_t^{n+1}|^2
\le4\E^P|\Xi^{n+1}|^2<\infty.
\label{eq:Bpicard-Mdoob}
\end{equation}
The representation property \eqref{eq:PRP}, applied to the centered martingale $M^{n+1}-M_0^{n+1}$, provides unique processes
\[
Z^{n+1}\in\mathbb H_B^2(P),\qquad
U^{n+1}\in\mathbb H_\Pi^2(P)
\]
such that
\begin{equation}
M_t^{n+1}=M_0^{n+1}
+\int_0^t Z_s^{n+1}dB_s
+\int_0^t\int_EU_s^{n+1}(e)\widetilde N^P(ds,de).
\label{eq:Bpicard-Mrep}
\end{equation}
Orthogonality of the continuous and purely discontinuous parts also gives
\begin{align}
\E^P\int_0^T\Bigl(|Z_s^{n+1}|_{a_s^P}^2
+\|U_s^{n+1}\|_\Pi^2\Bigr)ds
&=\E^P|M_T^{n+1}-M_0^{n+1}|^2\nonumber\\
&\le \E^P|\Xi^{n+1}|^2<\infty.
\label{eq:Bpicard-rep-energy}
\end{align}
Finally, set
\[
Y_t^{n+1}:=M_t^{n+1}-\int_0^tG_s^n ds.
\]
Then $Y^{n+1}$ is adapted and c\`adl\`ag. Moreover,
\begin{align}
\E^P\sup_{t\le T}|Y_t^{n+1}|^2
&\le2\E^P\sup_{t\le T}|M_t^{n+1}|^2
+2\E^P\sup_{t\le T}\left|\int_0^tG_s^n ds\right|^2\nonumber\\
&\le8\E^P|\Xi^{n+1}|^2
+2T\E^P\int_0^T|G_s^n|^2ds<\infty.
\label{eq:Bpicard-Yn-S2}
\end{align}
Thus $Y^{n+1}\in\mathbb S^2(P)$. Using $M_T^{n+1}=\Xi^{n+1}$ in \eqref{eq:Bpicard-Mrep}, we obtain, for every $t\in[0,T]$,
\begin{align}
Y_t^{n+1}={}&\zeta+\int_t^TG_s^n ds
-\int_t^TZ_s^{n+1}dB_s
-\int_t^T\int_EU_s^{n+1}(e)\widetilde N^P(ds,de).
\label{eq:Bpicard-iteration}
\end{align}
The estimates \eqref{eq:Bpicard-Gn-L2}--\eqref{eq:Bpicard-Yn-S2} show by induction that every Picard iterate is well defined in
$\mathbb S^2(P)\times\mathbb H_B^2(P)\times\mathbb H_\Pi^2(P)$.

At this point every iterate exists. It remains to show that the Picard map is contractive. We first write the difference equation explicitly so that the terms that disappear are visible: neither the terminal condition nor the monotone source contributes to the contraction constant.

\medskip
\noindent\emph{Step 3: equation for two successive Picard iterates.}
The terminal condition $\zeta$ is the same at every rank and the source $\chi$ is exogenous to the iteration. Subtracting the equations for ranks $n+1$ and $n$ therefore gives, before any shorthand is introduced,
\begin{align*}
Y_t^{n+1}-Y_t^n
={}&\int_t^T\Bigl[G_s^n-G_s^{n-1}\Bigr]ds
-\int_t^T\bigl(Z_s^{n+1}-Z_s^n\bigr)dB_s\\
&-\int_t^T\int_E\bigl(U_s^{n+1}(e)-U_s^n(e)\bigr)\widetilde N^P(ds,de),
\qquad t\in[0,T],\quad P\text{-a.s.}
\end{align*}
Moreover, since the forward state, its law, the control, and the source are fixed throughout the Picard iteration,
\begin{align*}
G_s^n-G_s^{n-1}
={}&f(s,X_{s-},\mu_{s-},Y_s^n,Z_s^n,U_s^n,\Lcal^P(Y_s^n),\alpha_s)\\
&-f(s,X_{s-},\mu_{s-},Y_s^{n-1},Z_s^{n-1},U_s^{n-1},\Lcal^P(Y_s^{n-1}),\alpha_s).
\end{align*}
Thus the cancellation of both $\zeta$ and $\chi$ is explicit. For the remaining estimates, for $n\ge1$, $t\in[0,T]$, and $e\in E$, set
\[
\Delta Y_t^{n+1}:=Y_t^{n+1}-Y_t^n,\qquad
\Delta Z_t^{n+1}:=Z_t^{n+1}-Z_t^n,\qquad
\Delta U_t^{n+1}(e):=U_t^{n+1}(e)-U_t^n(e),
\]
and
\[
\Delta G_t^n:=G_t^n-G_t^{n-1}.
\]
The identity for $\Delta Y^{n+1}$ is understood up to indistinguishability under $P$; those for $\Delta Z^{n+1}$ and $\Delta G^n$ are understood $dt\otimes P$-a.e., and that for $\Delta U^{n+1}$ is understood $dt\otimes P\otimes\Pi$-a.e. With this notation, for every $t\in[0,T]$, $P$-a.s.,
\begin{align}
\Delta Y_t^{n+1}
={}&\int_t^T\Delta G_s^n ds
-\int_t^T\Delta Z_s^{n+1}dB_s
-\int_t^T\int_E\Delta U_s^{n+1}(e)\widetilde N^P(ds,de).
\label{eq:Bpicard-difference}
\end{align}
Moreover, $\Delta Y_T^{n+1}=0$, $P$-a.s. In differential form,
\begin{equation}
d\Delta Y_t^{n+1}
=-\Delta G_t^n dt
+\Delta Z_t^{n+1}dB_t
+\int_E\Delta U_t^{n+1}(e)\widetilde N^P(dt,de).
\label{eq:Bpicard-differential}
\end{equation}

\medskip
\noindent\emph{Step 4: weighted It\^o formula with jumps.}
Fix $\gamma>0$. Apply It\^o's formula to $e^{\gamma t}|\Delta Y_t^{n+1}|^2$ on $[t,T]$. After localization,
\begin{align*}
&e^{\gamma t}|\Delta Y_t^{n+1}|^2
+\int_t^Te^{\gamma s}\Bigl(
\gamma|\Delta Y_s^{n+1}|^2
+|\Delta Z_s^{n+1}|_{a_s^P}^2\Bigr)ds\\
&\quad+\int_t^T\int_Ee^{\gamma s}
|\Delta U_s^{n+1}(e)|^2N^P(ds,de)\\
&=2\int_t^Te^{\gamma s}\Delta Y_{s-}^{n+1}\Delta G_s^n ds
-2\int_t^Te^{\gamma s}\Delta Y_{s-}^{n+1}\Delta Z_s^{n+1}dB_s\\
&\quad-2\int_t^T\int_Ee^{\gamma s}
\Delta Y_{s-}^{n+1}\Delta U_s^{n+1}(e)\widetilde N^P(ds,de).
\end{align*}
The last two integrals are local martingales. Stop the processes when $\sup|\Delta Y^{n+1}|$ and the energies of $\Delta Z^{n+1},\Delta U^{n+1}$ exceed a level $k$. After stopping, the martingales are square-integrable and have zero expectation. Letting $k\to\infty$, BDG estimates and square integrability justify the limit. By compensation of the Poisson random measure,
\[
\E^P\int_t^T\int_Ee^{\gamma s}
|\Delta U_s^{n+1}(e)|^2N^P(ds,de)
=
\E^P\int_t^Te^{\gamma s}\|\Delta U_s^{n+1}\|_\Pi^2ds.
\]
Therefore,
\begin{align}
&\E^P e^{\gamma t}|\Delta Y_t^{n+1}|^2
+\E^P\int_t^Te^{\gamma s}\Bigl(
\gamma|\Delta Y_s^{n+1}|^2
+|\Delta Z_s^{n+1}|_{a_s^P}^2
+\|\Delta U_s^{n+1}\|_\Pi^2\Bigr)ds
\nonumber\\
&\qquad=2\E^P\int_t^Te^{\gamma s}
\Delta Y_s^{n+1}\Delta G_s^n ds.
\label{eq:Bpicard-Ito}
\end{align}

\medskip
\noindent\emph{Step 5: generator estimate and law term.}
By Assumption~\ref{HB},
\begin{align}
|\Delta G_s^n|
\le L_B\Bigl(&|\Delta Y_s^n|+|\Delta Z_s^n|
+\|\Delta U_s^n\|_\Pi
+W_2(\Lcal^P(Y_s^n),\Lcal^P(Y_s^{n-1}))\Bigr).
\label{eq:Bpicard-Gdiff}
\end{align}
The natural coupling $(Y_s^n,Y_s^{n-1})$ gives
\begin{equation}
W_2^2(\Lcal^P(Y_s^n),\Lcal^P(Y_s^{n-1}))
\le \E^P|\Delta Y_s^n|^2.
\label{eq:Bpicard-law}
\end{equation}
Moreover, the uniform ellipticity bound \eqref{eq:qv} implies
\[
|\Delta Z_s^n|^2\le \lambda_{\min}(\underline a)^{-1}|\Delta Z_s^n|_{a_s^P}^2.
\]
Using $(a+b+c+d)^2\le4(a^2+b^2+c^2+d^2)$ and then \eqref{eq:Bpicard-law}, we obtain, for a constant $C_0$ depending only on $\lambda_{\min}(\underline a)^{-1}$,
\begin{align}
\E^P|\Delta G_s^n|^2
\le C_0L_B^2\E^P\Bigl(
|\Delta Y_s^n|^2
+|\Delta Z_s^n|_{a_s^P}^2
+\|\Delta U_s^n\|_\Pi^2\Bigr).
\label{eq:Bpicard-Gsquare}
\end{align}

\medskip
\noindent\emph{Step 6: Young's inequality and contraction in the energy norm.}
In \eqref{eq:Bpicard-Ito}, apply
\[
2ab\le \frac{\gamma}{2}a^2+\frac{2}{\gamma}b^2
\]
with $a=|\Delta Y_s^{n+1}|$ and $b=|\Delta G_s^n|$. Absorbing $\frac\gamma2|\Delta Y^{n+1}|^2$ into the left-hand side and taking $t=0$ gives
\begin{align}
&\E^P\int_0^Te^{\gamma s}\Bigl(
\frac\gamma2|\Delta Y_s^{n+1}|^2
+|\Delta Z_s^{n+1}|_{a_s^P}^2
+\|\Delta U_s^{n+1}\|_\Pi^2\Bigr)ds
\nonumber\\
&\qquad\le \frac{2}{\gamma}\E^P\int_0^Te^{\gamma s}|\Delta G_s^n|^2ds.
\label{eq:Bpicard-young}
\end{align}
Introduce the norm
\begin{equation}
\|(y,z,u)\|_{\gamma,P}^2
:=\E^P\int_0^Te^{\gamma s}
\Bigl(\frac\gamma2|y_s|^2+|z_s|_{a_s^P}^2+\|u_s\|_\Pi^2\Bigr)ds.
\label{eq:Bpicard-norm}
\end{equation}
To relate \eqref{eq:Bpicard-Gsquare} precisely to \eqref{eq:Bpicard-norm}, note that for $\gamma\ge1$,
\begin{align*}
\E^P\int_0^Te^{\gamma s}|\Delta Y_s^n|^2ds
&\le \frac{2}{\gamma}
\|(\Delta Y^n,\Delta Z^n,\Delta U^n)\|_{\gamma,P}^2,\\
\E^P\int_0^Te^{\gamma s}|\Delta Z_s^n|_{a_s^P}^2ds
&\le \|(\Delta Y^n,\Delta Z^n,\Delta U^n)\|_{\gamma,P}^2,\\
\E^P\int_0^Te^{\gamma s}\|\Delta U_s^n\|_\Pi^2ds
&\le \|(\Delta Y^n,\Delta Z^n,\Delta U^n)\|_{\gamma,P}^2.
\end{align*}
Multiplying \eqref{eq:Bpicard-Gsquare} by $e^{\gamma s}$ and integrating, there exists a structural constant $C_1$, independent of $P,n$, and $\gamma\ge1$, such that
\begin{equation}
\E^P\int_0^Te^{\gamma s}|\Delta G_s^n|^2ds
\le C_1L_B^2
\|(\Delta Y^n,\Delta Z^n,\Delta U^n)\|_{\gamma,P}^2.
\label{eq:Bpicard-Gweighted}
\end{equation}
Substituting \eqref{eq:Bpicard-Gweighted} into \eqref{eq:Bpicard-young} gives
\begin{equation}
\|(\Delta Y^{n+1},\Delta Z^{n+1},\Delta U^{n+1})\|_{\gamma,P}^2
\le \frac{2C_1L_B^2}{\gamma}
\|(\Delta Y^n,\Delta Z^n,\Delta U^n)\|_{\gamma,P}^2.
\label{eq:Bpicard-contraction}
\end{equation}
Choose
\[
\gamma>2C_1L_B^2
\qquad\text{and set}\qquad
q:=\frac{2C_1L_B^2}{\gamma}\in(0,1).
\]
Then, recursively,
\begin{equation}
\|(\Delta Y^{n+1},\Delta Z^{n+1},\Delta U^{n+1})\|_{\gamma,P}^2
\le q^n\|(\Delta Y^1,\Delta Z^1,\Delta U^1)\|_{\gamma,P}^2.
\label{eq:Bpicard-geometric}
\end{equation}
The right-hand side at rank $1$ is finite by \eqref{eq:Bpicard-data} and the construction in Step~2.

\medskip
\noindent\emph{Step 7: supremum estimate and the Cauchy property in $\mathbb S^2$.}
Equation \eqref{eq:Bpicard-difference} gives
\begin{align*}
\sup_{t\le T}|\Delta Y_t^{n+1}|^2
\le{}&3T\int_0^T|\Delta G_s^n|^2ds
+3\sup_{t\le T}\left|\int_t^T\Delta Z_s^{n+1}dB_s\right|^2\\
&+3\sup_{t\le T}\left|\int_t^T\int_E\Delta U_s^{n+1}(e)\widetilde N^P(ds,de)\right|^2.
\end{align*}
For a martingale $M$ with $M_0=0$, $\sup_{t\le T}|M_T-M_t|\le2\sup_{t\le T}|M_t|$. Doob--BDG inequalities and the Poisson isometry therefore give
\begin{align}
\E^P\sup_{t\le T}|\Delta Y_t^{n+1}|^2
\le C\E^P\int_0^T\Bigl(
|\Delta G_s^n|^2
+|\Delta Z_s^{n+1}|_{a_s^P}^2
+\|\Delta U_s^{n+1}\|_\Pi^2\Bigr)ds.
\label{eq:Bpicard-sup}
\end{align}
Combining \eqref{eq:Bpicard-Gsquare}, \eqref{eq:Bpicard-contraction}, and \eqref{eq:Bpicard-geometric}, we obtain
\begin{equation}
\E^P\sup_{t\le T}|\Delta Y_t^{n+1}|^2
+\E^P\int_0^T\Bigl(
|\Delta Z_s^{n+1}|_{a_s^P}^2
+\|\Delta U_s^{n+1}\|_\Pi^2\Bigr)ds
\le Cq^{n-1}.
\label{eq:Bpicard-cauchy-rate}
\end{equation}
Hence the series
\[
\sum_{n\ge1}\|\Delta Y^{n+1}\|_{\mathbb S^2(P)},\qquad
\sum_{n\ge1}\|\Delta Z^{n+1}\|_{\mathbb H_B^2(P)},\qquad
\sum_{n\ge1}\|\Delta U^{n+1}\|_{\mathbb H_\Pi^2(P)}
\]
converge because $\sum_n q^{n/2}<\infty$. Thus $(Y^n,Z^n,U^n)$ is Cauchy in
\[
\mathbb S^2(P)\times\mathbb H_B^2(P)\times\mathbb H_\Pi^2(P).
\]
By completeness, there exists $(Y,Z,U)$ in this space such that
\begin{equation}
Y^n\to Y\ \text{in }\mathbb S^2(P),\qquad
Z^n\to Z\ \text{in }\mathbb H_B^2(P),\qquad
U^n\to U\ \text{in }\mathbb H_\Pi^2(P).
\label{eq:Bpicard-strongconv}
\end{equation}

\medskip
\noindent\emph{Step 8: passage to the limit in the generator.}
The natural coupling gives
\[
\int_0^T W_2^2(\Lcal^P(Y_s^n),\Lcal^P(Y_s))ds
\le \E^P\int_0^T|Y_s^n-Y_s|^2ds\longrightarrow0.
\]
By Assumption~\ref{HB}, \eqref{eq:qv}, and \eqref{eq:Bpicard-strongconv}, if
\[
F_s^n:=f(s,X_{s-},\mu_{s-},Y_s^n,Z_s^n,U_s^n,\Lcal^P(Y_s^n),\alpha_s)
\]
and
\[
F_s:=f(s,X_{s-},\mu_{s-},Y_s,Z_s,U_s,\Lcal^P(Y_s),\alpha_s),
\]
then
\begin{align*}
\E^P\int_0^T|F_s^n-F_s|^2ds
\le C L_B^2\Bigg(&\E^P\int_0^T|Y_s^n-Y_s|^2ds\\
&+\E^P\int_0^T|Z_s^n-Z_s|_{a_s^P}^2ds\\
&+\E^P\int_0^T\|U_s^n-U_s\|_\Pi^2ds\\
&+\int_0^T W_2^2(\Lcal^P(Y_s^n),\Lcal^P(Y_s))ds\Bigg)
\longrightarrow0.
\end{align*}
By Cauchy--Schwarz,
\begin{equation}
\E^P\sup_{t\le T}\left|\int_t^T(F_s^n-F_s)ds\right|^2
\le T\E^P\int_0^T|F_s^n-F_s|^2ds\longrightarrow0.
\label{eq:Bpicard-drift-limit}
\end{equation}
Moreover, BDG and Doob--Poisson inequalities give
\begin{align*}
\E^P\sup_{t\le T}\left|\int_0^t(Z_s^n-Z_s)dB_s\right|^2
&\le C\E^P\int_0^T|Z_s^n-Z_s|_{a_s^P}^2ds\longrightarrow0,\\
\E^P\sup_{t\le T}\left|\int_0^t\!\int_E(U_s^n-U_s)(e)\widetilde N^P(ds,de)\right|^2
&\le C\E^P\int_0^T\|U_s^n-U_s\|_\Pi^2ds\longrightarrow0.
\end{align*}
Finally, $Y^{n+1}\to Y$ in $\mathbb S^2(P)$ and the source $\chi$ is independent of $n$. Passing to the limit in each term of \eqref{eq:Bpicard-iteration} yields, simultaneously for all $t$ up to indistinguishability,
\[
Y_t=\zeta+\int_t^T(F_s+\chi_s)ds
-\int_t^TZ_s dB_s
-\int_t^T\!\int_EU_s(e)\widetilde N^P(ds,de),
\]
which is \eqref{eq:Bexist}.

\medskip
\noindent\emph{Step 9: uniqueness.}
Let $(Y,Z,U)$ and $(\bar Y,\bar Z,\bar U)$ be two solutions of \eqref{eq:Bexist}. Set
\[
\widehat Y=Y-\bar Y,\qquad
\widehat Z=Z-\bar Z,\qquad
\widehat U=U-\bar U,
\]
and
\begin{align*}
\widehat F_t:={}&f(t,X_{t-},\mu_{t-},Y_t,Z_t,U_t,\Lcal^P(Y_t),\alpha_t)\\
&-f(t,X_{t-},\mu_{t-},\bar Y_t,\bar Z_t,\bar U_t,\Lcal^P(\bar Y_t),\alpha_t).
\end{align*}
The terminal condition $\zeta$ and the source $\chi$ are the same for both solutions, so they cancel in the difference and
\[
d\widehat Y_t=-\widehat F_tdt+\widehat Z_tdB_t
+\int_E\widehat U_t(e)\widetilde N^P(dt,de),
\qquad \widehat Y_T=0.
\]
It\^o's formula applied to $e^{\gamma t}|\widehat Y_t|^2$, followed by expectation, gives
\begin{align*}
&\E^P\int_0^Te^{\gamma s}\Bigl(
\gamma|\widehat Y_s|^2+|\widehat Z_s|_{a_s^P}^2
+\|\widehat U_s\|_\Pi^2\Bigr)ds\\
&\qquad=2\E^P\int_0^Te^{\gamma s}\widehat Y_s\widehat F_sds.
\end{align*}
As in \eqref{eq:Bpicard-Gdiff}--\eqref{eq:Bpicard-Gsquare},
\[
\E^P|\widehat F_s|^2
\le C_0L_B^2\E^P\Bigl(
|\widehat Y_s|^2+|\widehat Z_s|_{a_s^P}^2
+\|\widehat U_s\|_\Pi^2\Bigr).
\]
Young's inequality, with the same $\gamma$ as in \eqref{eq:Bpicard-contraction}, then gives
\[
\|(\widehat Y,\widehat Z,\widehat U)\|_{\gamma,P}^2
\le q\|(\widehat Y,\widehat Z,\widehat U)\|_{\gamma,P}^2,
\qquad 0<q<1.
\]
Thus $(1-q)\|(\widehat Y,\widehat Z,\widehat U)\|_{\gamma,P}^2\le0$, so this norm vanishes. Hence
\[
\widehat Z=0\quad dt\otimes P\text{-a.e. in the }a^P\text{-norm},
\qquad
\widehat U=0\quad dt\otimes P\otimes\Pi\text{-a.e.},
\]
and $\widehat Y=0$ in $L^2(dt\otimes P)$. Returning to the difference equation, $\widehat Y$ is a c\`adl\`ag martingale that vanishes $dt\otimes P$-a.e. and has zero terminal value; therefore it vanishes up to indistinguishability. Uniqueness follows.

\medskip
\noindent\emph{Step 10: a priori estimate and uniformity in $P$.}
For the solution obtained above, set $\eta_t=\Lcal^P(Y_t)$ and
\[
F_t:=f(t,X_{t-},\mu_{t-},Y_t,Z_t,U_t,\eta_t,\alpha_t)+\chi_t.
\]
By Assumption~\ref{HB} and the definition of $\ell$,
\begin{equation}
|F_t|\le \ell_t+L_B\Bigl(|Y_t|+|Z_t|+\|U_t\|_\Pi+W_2(\eta_t,\delta_0)\Bigr),
\label{eq:Bpicard-Fgrowth}
\end{equation}
and
\begin{equation}
W_2^2(\eta_t,\delta_0)=\E^P|Y_t|^2.
\label{eq:Bpicard-lawzero}
\end{equation}
Apply It\^o's formula to $e^{\gamma t}|Y_t|^2$ on $[t,T]$ and take expectations as in Step~4. We obtain
\begin{align}
&\E^Pe^{\gamma t}|Y_t|^2
+\E^P\int_t^Te^{\gamma s}\Bigl(
\gamma|Y_s|^2+|Z_s|_{a_s^P}^2+\|U_s\|_\Pi^2\Bigr)ds
\nonumber\\
&\qquad=\E^Pe^{\gamma T}|\zeta|^2
+2\E^P\int_t^Te^{\gamma s}Y_sF_sds.
\label{eq:Bpicard-apriori-Ito}
\end{align}
We estimate the terms in $2Y_sF_s$ separately. First,
\[
2|Y_s|\ell_s\le \frac{\gamma}{4}|Y_s|^2+\frac{4}{\gamma}\ell_s^2.
\]
Next, using \eqref{eq:qv},
\begin{align*}
2L_B|Y_s||Z_s|
&\le \frac14|Z_s|_{a_s^P}^2
+4L_B^2\lambda_{\min}(\underline a)^{-1}|Y_s|^2,\\
2L_B|Y_s|\|U_s\|_\Pi
&\le \frac14\|U_s\|_\Pi^2+4L_B^2|Y_s|^2,\\
2L_B|Y_s|^2&=2L_B|Y_s|^2.
\end{align*}
Finally, $W_2(\eta_s,\delta_0)$ is deterministic under $P$, and Cauchy--Schwarz together with \eqref{eq:Bpicard-lawzero} gives
\begin{align*}
2L_B\E^P\bigl[|Y_s|W_2(\eta_s,\delta_0)\bigr]
&\le2L_B(\E^P|Y_s|^2)^{1/2}W_2(\eta_s,\delta_0)\\
&=2L_B\E^P|Y_s|^2.
\end{align*}
Choose $\gamma$ larger than the sum of the structural coefficients multiplying $|Y_s|^2$. The terms $\frac14|Z|_{a^P}^2$, $\frac14\|U\|_\Pi^2$, and the $|Y|^2$ term are absorbed into the left-hand side of \eqref{eq:Bpicard-apriori-Ito}. With $t=0$,
\begin{align}
\E^P\int_0^T\Bigl(|Y_t|^2+|Z_t|_{a_t^P}^2+\|U_t\|_\Pi^2\Bigr)dt
\le C\E^P\left[|\zeta|^2+\int_0^T\ell_t^2dt\right].
\label{eq:Bpicard-apriori-energy}
\end{align}
To control the supremum of $Y$, rewrite the BSDE as
\[
Y_t+\int_0^tF_sds
=\E^P\left[\zeta+\int_0^TF_sds\,\middle|\,\mathcal F_t^P\right]
=:M_t.
\]
Hence
\[
\sup_{t\le T}|Y_t|
\le \sup_{t\le T}|M_t|+\int_0^T|F_s|ds.
\]
Doob's $L^2$ inequality gives
\begin{align*}
\E^P\sup_{t\le T}|M_t|^2
&\le4\E^P\left|\zeta+\int_0^TF_sds\right|^2\\
&\le8\E^P|\zeta|^2+8T\E^P\int_0^T|F_s|^2ds.
\end{align*}
Therefore,
\begin{equation}
\E^P\sup_{t\le T}|Y_t|^2
\le C\E^P\left[|\zeta|^2+\int_0^T|F_s|^2ds\right].
\label{eq:Bpicard-sup-intermediate}
\end{equation}
Squaring \eqref{eq:Bpicard-Fgrowth} and using $|Z_s|^2\le\lambda_{\min}(\underline a)^{-1}|Z_s|_{a_s^P}^2$, we obtain
\begin{align*}
|F_s|^2
\le C\Bigl(&\ell_s^2+|Y_s|^2
+\lambda_{\min}(\underline a)^{-1}|Z_s|_{a_s^P}^2
+\|U_s\|_\Pi^2+W_2^2(\eta_s,\delta_0)\Bigr).
\end{align*}
After integration and expectation, \eqref{eq:Bpicard-lawzero} gives
\begin{align*}
\E^P\int_0^T|F_s|^2ds
\le C\Bigg(&\E^P\int_0^T\ell_s^2ds
+2\E^P\int_0^T|Y_s|^2ds\\
&+\E^P\int_0^T|Z_s|_{a_s^P}^2ds
+\E^P\int_0^T\|U_s\|_\Pi^2ds\Bigg).
\end{align*}
The energy estimate \eqref{eq:Bpicard-apriori-energy} therefore implies
\[
\E^P\int_0^T|F_s|^2ds
\le C\E^P\left[\int_0^T\ell_s^2ds+|\zeta|^2\right].
\]
Inserted into \eqref{eq:Bpicard-sup-intermediate}, this yields
\begin{align}
\E^P\sup_{t\le T}|Y_t|^2
+\E^P\int_0^T\Bigl(|Z_t|_{a_t^P}^2+\|U_t\|_\Pi^2\Bigr)dt
\le C\E^P\left[|\zeta|^2+\int_0^T\ell_t^2dt\right].
\label{eq:Bpicard-apriori-sup}
\end{align}
Finally,
\[
\sup_{t\le T}|\mathsf K_t|^2
=\sup_{t\le T}\left|\int_0^t\chi_sds\right|^2
\le T\int_0^T|\chi_s|^2ds.
\]
Together with \eqref{eq:Bpicard-data}, this proves \eqref{eq:Bexistbound}. All constants are uniform in $P$ because $\underline a,\overline a$, the Lipschitz constants, the data bounds, and the compensator $\Pi$ are common to the model family. No occupation assumption is used in this construction.
\end{proof}

\begin{remark}[One-way coupling]
The forward equation does not depend on $(Y,Z,U)$. The forward process is therefore fixed before the backward iteration; consequently, the source $\chi$ is the same at every Picard step and cancels in the differences.
\end{remark}

\begin{theorem}[Well-posedness of the selected system]\label{thm:well}
Let $\alpha\in\mathcal A^2$. Assume \ref{HF}, \ref{HB}, \ref{HD}, and the representation property \eqref{eq:PRP}. Then, for every $P\in\Pcal$, system \eqref{eq:FWD}--\eqref{eq:Kdef} admits a unique solution
\[
(X^P,Y^P,Z^P,U^P,\mathsf K^P)
\in\mathbb S^2(P;\R^d)\times\mathbb S^2(P)\times\mathbb H_B^2(P)
\times\mathbb H_\Pi^2(P)\times\mathbb S^2(P),
\]
and
\begin{align}
\sup_{P\in\Pcal}\E^P\Bigl[
&\sup_{t\le T}|X_t^P|^2+\sup_{t\le T}|Y_t^P|^2
+\sup_{t\le T}|\mathsf K_t^P|^2\nonumber\\
&+\int_0^T\{|Z_t^P|_{a_t^P}^2+\|U_t^P\|_\Pi^2\}dt
\Bigr]<\infty.
\label{eq:uniformwell}
\end{align}
\end{theorem}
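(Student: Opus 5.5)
The proof assembles Theorem~\ref{thm:Fwell} and Theorem~\ref{thm:Bwell}, exploiting the triangular structure of the system. Fix $P\in\Pcal$. First, Theorem~\ref{thm:Fwell} (which needs only \ref{HF} and $\alpha\in\mathcal A^2$) provides the unique strong solution $X^P\in\mathbb S^2(P;\R^d)$ of \eqref{eq:FWD}, with the continuous law flow $\mu^P$. It also gives the uniform bound \eqref{eq:Fsecond}. Next, define
\[
\Gamma_t^P:=A^0(X_{t-}^P,\mu_{t-}^P,\alpha_t),
\qquad
\mathsf K_t^P:=\int_0^t\langle\varrho,\Gamma_s^P\rangle ds.
\]

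\emph{Measurability and integrability of the source.} The remark after Proposition~\ref{prop:Yosida} shows that $A^0$ is Borel. The map $t\mapsto\mu_{t-}^P$ is deterministic and Borel, being the left limit of a continuous flow; in fact $\mu_{t-}^P=\mu_t^P$. Hence $\Gamma^P$ is predictable. The growth bound in \ref{HD} then gives
\[
\E^P\int_0^T|\Gamma_t^P|^2dt\le C\bigl(1+\E^P\sup_{t\le T}|X_t^P|^2+\E^P\int_0^T|\alpha_t|^2dt\bigr),
\]
so $\Gamma^P\in L^2(dt\otimes P;\R^d)$. Cauchy--Schwarz shows that $\mathsf K^P$ is continuous, adapted, absolutely continuous, and lies in $\mathbb S^2(P)$.

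\emph{Backward equation and uniform bound.} With $X^P$ and $\mathsf K^P$ fixed, Theorem~\ref{thm:Bwell} yields the unique triplet $(Y^P,Z^P,U^P)$ solving \eqref{eq:BWD}, together with the uniform bound \eqref{eq:Bexistbound}. The estimate \eqref{eq:uniformwell} is then simply the sum of \eqref{eq:Fsecond} and \eqref{eq:Bexistbound}. All constants are structural and independent of $P$.

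\emph{Uniqueness.} This is the only point needing care. Let $(\bar X,\bar Y,\bar Z,\bar U,\bar{\mathsf K})$ be any solution in the stated class. Since the forward equation does not involve the backward unknowns, $\bar X$ solves \eqref{eq:FWD}. Uniqueness in Theorem~\ref{thm:Fwell} gives $\bar X=X^P$ up to indistinguishability, and hence $\bar\mu=\mu^P$. Consequently
\[
\bar\Gamma=A^0(\bar X_{t-},\bar\mu_{t-},\alpha_t)=\Gamma^P\quad dt\otimes P\text{-a.e.},
\]
and $\bar{\mathsf K}$ and $\mathsf K^P$ are indistinguishable, as continuous processes defined by integrals of a.e.-equal integrands. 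The backward triplet then solves the same BSDE \eqref{eq:Bexist}, and the uniqueness part of Theorem~\ref{thm:Bwell} concludes.

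I expect no genuine obstacle. The subtle step is checking that the source is determined solely by the forward solution, independent of the version chosen. This holds because modifications on $dt\otimes P$-null sets do not change the time integrals, and $X_{s-}=X_s$ for a.e.\ $s$.
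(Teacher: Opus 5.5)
Your proposal is correct and follows essentially the same route as the paper. The paper obtains $X^P$ from Theorem~\ref{thm:Fwell}, uses the growth bound in \ref{HD} together with \eqref{eq:Fsecond} to put the source in $L^2(dt\otimes P)$, and invokes Theorem~\ref{thm:Bwell} for the backward triplet and \eqref{eq:Bexistbound}. Uniqueness then comes from identifying the forward component first and applying Step~9 of Theorem~\ref{thm:Bwell}.

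One small correction: Theorem~\ref{thm:Fwell} does not need only \ref{HF} and $\alpha\in\mathcal A^2$. It is stated under \ref{HF} and \ref{HD}, since \ref{HD} supplies $\sup_P\E^P|\xi|^2<\infty$ for \eqref{eq:Fsecond}. This is harmless here because \ref{HD} is assumed.
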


\begin{proof}
Fix $P\in\Pcal$. Theorem~\ref{thm:Fwell} provides a unique forward process
\[
X^P\in\mathbb S^2(P;\R^d),\qquad \mu_t^P=\Lcal^P(X_t^P),
\]
satisfying \eqref{eq:FWD}. By the growth of $A^0$ and \eqref{eq:Fsecond},
\begin{align*}
\E^P\int_0^T
|A^0(X_{t-}^P,\mu_{t-}^P,\alpha_t)|^2dt
&\le C\E^P\int_0^T(1+|X_{t-}^P|^2+|\alpha_t|^2)dt\\
&<\infty.
\end{align*}
Hence
\[
\chi_t^P=\langle\varrho,A^0(X_{t-}^P,\mu_{t-}^P,\alpha_t)\rangle
\]
is predictable and belongs to $L^2(dt\otimes P)$. The assumptions of Theorem~\ref{thm:Bwell} are therefore satisfied along this forward process, and that theorem constructs a unique triplet
\[
(Y^P,Z^P,U^P)\in
\mathbb S^2(P)\times\mathbb H_B^2(P)\times\mathbb H_\Pi^2(P).
\]
Finally, set
\[
\Gamma_t^P=A^0(X_{t-}^P,\mu_{t-}^P,\alpha_t),
\qquad
\mathsf K_t^P=\int_0^t\langle\varrho,\Gamma_s^P\rangle ds.
\]
By Cauchy--Schwarz,
\[
\sup_{t\le T}|\mathsf K_t^P|^2
\le T|\varrho|^2\int_0^T|\Gamma_s^P|^2ds,
\]
so $\mathsf K^P\in\mathbb S^2(P)$. The five processes satisfy \eqref{eq:FWD}--\eqref{eq:Kdef}, establishing existence.

For uniqueness, let $(X,Y,Z,U,\mathsf K)$ and $(\bar X,\bar Y,\bar Z,\bar U,\bar{\mathsf K})$ be two solutions. Uniqueness in Theorem~\ref{thm:Fwell} gives $X=\bar X$ up to indistinguishability, hence $\mu=\bar\mu$. Therefore $A^0(X_-,\mu_-,\alpha)$ is the same in both systems and $\mathsf K=\bar{\mathsf K}$. The two BSDEs have the same terminal condition, the same exogenous source, and the same integrators; Step~9 of Theorem~\ref{thm:Bwell} then gives
\[
Y=\bar Y\quad\text{up to indistinguishability},
\qquad Z=\bar Z\text{ in }\mathbb H_B^2(P),
\qquad U=\bar U\text{ in }\mathbb H_\Pi^2(P).
\]

Finally, \eqref{eq:Fsecond}, \eqref{eq:Bexistbound}, and the preceding estimate for $\mathsf K$ give
\[
\E^P\Bigl[\sup_{t\le T}|X_t^P|^2+\sup_{t\le T}|Y_t^P|^2
+\sup_{t\le T}|\mathsf K_t^P|^2
+\int_0^T(|Z_t^P|_{a_t^P}^2+\|U_t^P\|_\Pi^2)dt\Bigr]\le C.
\]
All constants are common to $P\in\Pcal$; taking the supremum proves \eqref{eq:uniformwell}.
\end{proof}

\subsection{Stability of the backward equation with jumps}

We now compare two solutions driven by the same integrators.

\begin{lemma}[Stability of a McKean--Vlasov BSDE with jumps]\label{lem:Bstab}
Fix $P\in\Pcal$ and assume \ref{HB}. Let $(Y,Z,U)$ and $(\bar Y,\bar Z,\bar U)$ be two solutions, driven by the same generator $f$ and the same integrators, with terminal conditions $\zeta,\bar\zeta$, exogenous terms $\chi,\bar\chi$, and arguments $(X,\mu,\alpha)$ and $(\bar X,\bar\mu,\bar\alpha)$, respectively. Then
\begin{align}
&\E^P\sup_{t\le T}|Y_t-\bar Y_t|^2
+\E^P\int_0^T|Z_t-\bar Z_t|_{a_t^P}^2dt
+\E^P\int_0^T\|U_t-\bar U_t\|_\Pi^2dt\nonumber\\
&\le C\E^P\left[
|\zeta-\bar\zeta|^2+\int_0^T\{
|X_t-\bar X_t|^2+\Wtwo^2(\mu_t,\bar\mu_t)
+|\alpha_t-\bar\alpha_t|^2+|\chi_t-\bar \chi_t|^2\}dt\right],
\label{eq:Bstab}
\end{align}
where $C$ is independent of $P$.
\end{lemma}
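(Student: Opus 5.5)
We follow the energy argument of Steps~4--10 in the proof of Theorem~\ref{thm:Bwell}, applied to differences. Set $\widehat Y=Y-\bar Y$, $\widehat Z=Z-\bar Z$, $\widehat U=U-\bar U$, $\widehat\zeta=\zeta-\bar\zeta$, and write $\eta_t=\Lcal^P(Y_t)$, $\bar\eta_t=\Lcal^P(\bar Y_t)$. Subtracting the two equations gives
\[
d\widehat Y_t=-\widehat F_tdt+\widehat Z_tdB_t+\int_E\widehat U_t(e)\widetilde N^P(dt,de),\qquad \widehat Y_T=\widehat\zeta,
\]
with
\[
\widehat F_t=f(t,X_{t-},\mu_{t-},Y_t,Z_t,U_t,\eta_t,\alpha_t)-f(t,\bar X_{t-},\bar\mu_{t-},\bar Y_t,\bar Z_t,\bar U_t,\bar\eta_t,\bar\alpha_t)+\chi_t-\bar\chi_t.
\]
By \eqref{eq:fLip}, the uniform ellipticity bound $|z|^2\le\lambda_{\min}(\underline a)^{-1}|z|_{a^P}^2$, and the coupling inequality $W_2^2(\eta_t,\bar\eta_t)\le\E^P|\widehat Y_t|^2$,
\[
|\widehat F_t|\le L_B\bigl(|\widehat Y_t|+c\,|\widehat Z_t|_{a_t^P}+\|\widehat U_t\|_\Pi+(\E^P|\widehat Y_t|^2)^{1/2}\bigr)+R_t,
\]
where $R_t:=L_B(|X_{t-}-\bar X_{t-}|+W_2(\mu_{t-},\bar\mu_{t-})+|\alpha_t-\bar\alpha_t|)+|\chi_t-\bar\chi_t|$. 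Since $X_{t-}=X_t$ for a.e.\ $t$ and the law flows agree a.e.\ with their left-limit versions, $\E^P\int_0^TR_t^2dt$ is bounded by $C$ times the integral appearing in \eqref{eq:Bstab}.

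First we derive the energy estimate. We apply It\^o's formula with jumps to $e^{\gamma t}|\widehat Y_t|^2$ and localize exactly as in Step~4. Taking expectations, and using compensation of $N$ for the jump quadratic term, we get
\[
\E^P\int_0^Te^{\gamma s}(\gamma|\widehat Y_s|^2+|\widehat Z_s|_{a_s^P}^2+\|\widehat U_s\|_\Pi^2)ds\le e^{\gamma T}\E^P|\widehat\zeta|^2+2\E^P\int_0^Te^{\gamma s}|\widehat Y_s||\widehat F_s|ds.
\]
The product is split with Young's inequality as in Step~10: the $Z$ and $U$ terms go with weight $\tfrac14$, the remainder as $\tfrac{\gamma}{4}|\widehat Y|^2+\tfrac4\gamma R^2$. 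The deterministic law term is handled by Cauchy--Schwarz, which gives $2L_B\E^P[|\widehat Y_s|(\E^P|\widehat Y_s|^2)^{1/2}]=2L_B\E^P|\widehat Y_s|^2$. Choosing $\gamma$ large, depending only on $L_B$ and $\lambda_{\min}(\underline a)$, absorbs every $Y$, $Z$, $U$ term and yields
\[
\E^P\int_0^T(|\widehat Y|^2+|\widehat Z|_{a^P}^2+\|\widehat U\|_\Pi^2)dt\le C\,\E^P\Bigl[|\widehat\zeta|^2+\int_0^TR_t^2dt\Bigr].
\]

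Next we control the supremum. We write $\widehat Y_t+\int_0^t\widehat F_sds=\E^P[\widehat\zeta+\int_0^T\widehat F_sds\mid\mathcal F_t^P]$ and apply Doob's inequality as in \eqref{eq:Bpicard-sup-intermediate}. This gives $\E^P\sup_t|\widehat Y_t|^2\le C\E^P[|\widehat\zeta|^2+\int_0^T|\widehat F_s|^2ds]$. Squaring the bound on $\widehat F$ and inserting the energy estimate then bounds $\E^P\int|\widehat F|^2$ by the right-hand side of \eqref{eq:Bstab}, which proves the claim. Every constant used depends only on $T$, $L_B$, $\underline a$, $\overline a$, and the common compensator, so $C$ is independent of $P$.

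The only delicate step is the localization in It\^o's formula together with the treatment of the law term. The law term is nonlocal in $\omega$, so it cannot be absorbed pathwise; it must be absorbed after taking expectations. It is therefore essential to take expectations before applying Young's inequality to that term, exactly as in Step~10. Removing the stopping times uses only that $\widehat Y\in\mathbb S^2(P)$ and $\widehat Z,\widehat U$ lie in the energy spaces, as in Step~4.
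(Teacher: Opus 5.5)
Your proof is correct and follows essentially the paper's route: a weighted It\^o formula for $e^{\gamma t}|\widehat Y_t|^2$, Young's inequality with $\gamma$ chosen to absorb the $Y$, law, $Z$ and $U$ terms, and then a supremum bound. For the supremum you use the Doob representation from Step~10 of Theorem~\ref{thm:Bwell}, whereas the paper uses the triangle inequality together with Lemma~\ref{lem:mart}; this is an immaterial variation. Two small corrections: the Cauchy--Schwarz step for the law term gives $\le$, not $=$, and it is not essential to take expectations before applying Young, since the pathwise bound $2L_B|\widehat Y_s|(\E^P|\widehat Y_s|^2)^{1/2}\le L_B|\widehat Y_s|^2+L_B\E^P|\widehat Y_s|^2$, which is what the paper uses, works just as well once expectations are taken.
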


\begin{proof}
Fix $P$. We first display the subtraction in full. The two backward equations imply, for every $t\in[0,T]$, $P$-a.s.,
\begin{align*}
Y_t-\bar Y_t
={}&\zeta-\bar\zeta\\
&+\int_t^T\Bigl[
 f(s,X_s,\mu_s,Y_s,Z_s,U_s,\Lcal^P(Y_s),\alpha_s)\\
&\hspace{23mm}-f(s,\bar X_s,\bar\mu_s,\bar Y_s,\bar Z_s,\bar U_s,\Lcal^P(\bar Y_s),\bar\alpha_s)
 +\chi_s-\bar\chi_s\Bigr]ds\\
&-\int_t^T(Z_s-\bar Z_s)dB_s
-\int_t^T\int_E\bigl(U_s(e)-\bar U_s(e)\bigr)\widetilde N^P(ds,de).
\end{align*}
By Assumption~\ref{HB}, the generator difference appearing here satisfies
\begin{align*}
&\bigl|f(s,X_s,\mu_s,Y_s,Z_s,U_s,\Lcal^P(Y_s),\alpha_s)
-f(s,\bar X_s,\bar\mu_s,\bar Y_s,\bar Z_s,\bar U_s,\Lcal^P(\bar Y_s),\bar\alpha_s)\bigr|\\
&\quad\le L_B\Bigl(
|X_s-\bar X_s|+W_2(\mu_s,\bar\mu_s)+|Y_s-\bar Y_s|+|Z_s-\bar Z_s|\\
&\hspace{31mm}+\|U_s-\bar U_s\|_\Pi
+W_2(\Lcal^P(Y_s),\Lcal^P(\bar Y_s))+|\alpha_s-\bar\alpha_s|\Bigr).
\end{align*}
Applying It\^o's formula with jumps directly to $e^{\gamma s}|Y_s-\bar Y_s|^2$ on $[t,T]$ gives, after localization,
\begin{align*}
&e^{\gamma t}|Y_t-\bar Y_t|^2
+\int_t^Te^{\gamma s}\Bigl(\gamma|Y_s-\bar Y_s|^2+|Z_s-\bar Z_s|_{a_s^P}^2\Bigr)ds\\
&\quad+\int_t^T\int_Ee^{\gamma s}|U_s(e)-\bar U_s(e)|^2N^P(ds,de)\\
&=e^{\gamma T}|\zeta-\bar\zeta|^2\\
&\quad+2\int_t^Te^{\gamma s}(Y_s-\bar Y_s)\Bigl[
 f(s,X_s,\mu_s,Y_s,Z_s,U_s,\Lcal^P(Y_s),\alpha_s)\\
&\hspace{33mm}-f(s,\bar X_s,\bar\mu_s,\bar Y_s,\bar Z_s,\bar U_s,\Lcal^P(\bar Y_s),\bar\alpha_s)
 +\chi_s-\bar\chi_s\Bigr]ds\\
&\quad-2\int_t^Te^{\gamma s}(Y_{s-}-\bar Y_{s-})(Z_s-\bar Z_s)dB_s\\
&\quad-\int_t^T\int_Ee^{\gamma s}\Bigl(2(Y_{s-}-\bar Y_{s-})(U_s(e)-\bar U_s(e))
+|U_s(e)-\bar U_s(e)|^2\Bigr)\widetilde N^P(ds,de).
\end{align*}
This fully expanded form identifies every term before any abbreviation is used. We now introduce compact notation for the remaining estimates:
\begin{align*}
\Delta X_t&:=X_t-\bar X_t,\qquad
\Delta Y_t:=Y_t-\bar Y_t,
&& t\in[0,T],\quad P\text{-a.s.},\\
\Delta Z_t&:=Z_t-\bar Z_t,\qquad
\Delta\alpha_t:=\alpha_t-\bar\alpha_t,\qquad
\Delta\chi_t:=\chi_t-\bar\chi_t,
&& dt\otimes P\text{-a.e.},\\
\Delta U_t(e)&:=U_t(e)-\bar U_t(e),
&& dt\otimes P\otimes\Pi(de)\text{-a.e.},\\
\Delta\zeta&:=\zeta-\bar\zeta,
&& P\text{-a.s.}
\end{align*}
and
\begin{align*}
\Delta f_s:={}&f(s,X_s,\mu_s,Y_s,Z_s,U_s,\Lcal^P(Y_s),\alpha_s)\\
&-f(s,\bar X_s,\bar\mu_s,\bar Y_s,\bar Z_s,\bar U_s,\Lcal^P(\bar Y_s),\bar\alpha_s).
\end{align*}
Hence
\begin{align}
|\Delta f_s|
\le L_B\bigl(&|\Delta X_s|+W_2(\mu_s,\bar\mu_s)+|\Delta Y_s|+|\Delta Z_s|\nonumber\\
&+\|\Delta U_s\|_\Pi+W_2(\Lcal^P(Y_s),\Lcal^P(\bar Y_s))
+|\Delta\alpha_s|\bigr).
\label{eq:Bstab-df-detail}
\end{align}
The natural coupling gives
\begin{equation}
W_2^2(\Lcal^P(Y_s),\Lcal^P(\bar Y_s))
\le\E^P|\Delta Y_s|^2.
\label{eq:Bstab-law-detail}
\end{equation}
Since $a_s^P\ge\underline a>0$,
\[
|\Delta Z_s|^2
\le\lambda_{\min}(\underline a)^{-1}|\Delta Z_s|_{a_s^P}^2.
\]
With this notation, the fully expanded subtraction above becomes
\begin{align*}
\Delta Y_t={}&\Delta\zeta+\int_t^T(\Delta f_s+\Delta \chi_s)ds
-\int_t^T\Delta Z_s dB_s\\
&-\int_t^T\int_E\Delta U_s(e)\widetilde N^P(ds,de),
\qquad t\in[0,T],\quad P\text{-a.s.}
\end{align*}
For $\gamma>0$, It\^o's formula with jumps applied to $e^{\gamma s}|\Delta Y_s|^2$ on $[t,T]$ gives, after localization of the martingales,
\begin{align*}
&e^{\gamma t}|\Delta Y_t|^2
+\int_t^Te^{\gamma s}\bigl(\gamma|\Delta Y_s|^2+|\Delta Z_s|_{a_s^P}^2\bigr)ds\\
&\quad+\int_t^T\int_Ee^{\gamma s}|\Delta U_s(e)|^2N^P(ds,de)\\
&=e^{\gamma T}|\Delta\zeta|^2
+2\int_t^Te^{\gamma s}\Delta Y_s(\Delta f_s+\Delta \chi_s)ds\\
&\quad-2\int_t^Te^{\gamma s}\Delta Y_{s-}\Delta Z_s dB_s
-\int_t^T\int_Ee^{\gamma s}
\bigl(2\Delta Y_{s-}\Delta U_s(e)+|\Delta U_s(e)|^2\bigr)
\widetilde N^P(ds,de).
\end{align*}
Taking expectations and using the compensator for the quadratic jump term,
\begin{align}
&\E^Pe^{\gamma t}|\Delta Y_t|^2
+\E^P\int_t^Te^{\gamma s}
\bigl(\gamma|\Delta Y_s|^2+|\Delta Z_s|_{a_s^P}^2+\|\Delta U_s\|_\Pi^2\bigr)ds\nonumber\\
&\qquad=\E^Pe^{\gamma T}|\Delta\zeta|^2
+2\E^P\int_t^Te^{\gamma s}\Delta Y_s(\Delta f_s+\Delta \chi_s)ds.
\label{eq:Bstab-Ito-detail}
\end{align}
Set
\[
R_s:=|\Delta X_s|+W_2(\mu_s,\bar\mu_s)+|\Delta\alpha_s|+|\Delta \chi_s|.
\]
Using \eqref{eq:Bstab-df-detail}, ellipticity, and Young's inequality in the form $2ab\le\varepsilon a^2+\varepsilon^{-1}b^2$, we obtain constants $C$ such that, for every $\varepsilon>0$,
\begin{align*}
2|\Delta Y_s|(|\Delta f_s|+|\Delta \chi_s|)
\le{}&C|\Delta Y_s|^2+C R_s^2+C\E^P|\Delta Y_s|^2\\
&+\varepsilon|\Delta Z_s|_{a_s^P}^2
+\varepsilon\|\Delta U_s\|_\Pi^2.
\end{align*}
Choose first $\varepsilon=1/4$, then $\gamma$ large enough to absorb all $|\Delta Y|^2$ terms on the right-hand side of \eqref{eq:Bstab-Ito-detail}. With $t=0$, we obtain the energy estimate
\begin{align}
&\sup_{t\le T}\E^P|\Delta Y_t|^2
+\E^P\int_0^T\bigl(|\Delta Z_s|_{a_s^P}^2+\|\Delta U_s\|_\Pi^2\bigr)ds\nonumber\\
&\quad\le C\E^P\left[|\Delta\zeta|^2+
\int_0^T\bigl(|\Delta X_s|^2+W_2^2(\mu_s,\bar\mu_s)
+|\Delta\alpha_s|^2+|\Delta \chi_s|^2\bigr)ds\right].
\label{eq:Bstab-energy-detail}
\end{align}

For the time supremum, the difference equation gives
\begin{align*}
\sup_{t\le T}|\Delta Y_t|
\le{}&|\Delta\zeta|+\int_0^T(|\Delta f_s|+|\Delta \chi_s|)ds\\
&+2\sup_{t\le T}\left|\int_0^t\Delta Z_s dB_s\right|
+2\sup_{t\le T}\left|\int_0^t\int_E\Delta U_s(e)\widetilde N^P(ds,de)\right|.
\end{align*}
Squaring, then applying Cauchy--Schwarz in time and Lemma~\ref{lem:mart},
\begin{align*}
\E^P\sup_{t\le T}|\Delta Y_t|^2
\le C\E^P\Bigl[&|\Delta\zeta|^2+
\int_0^T(|\Delta f_s|^2+|\Delta \chi_s|^2)ds\\
&+\int_0^T(|\Delta Z_s|_{a_s^P}^2+\|\Delta U_s\|_\Pi^2)ds\Bigr].
\end{align*}
The estimate \eqref{eq:Bstab-df-detail}, the law bound \eqref{eq:Bstab-law-detail}, and \eqref{eq:Bstab-energy-detail} absorb the terms involving $(\Delta Y,\Delta Z,\Delta U)$, giving \eqref{eq:Bstab}. The constants are independent of $P$ because the ellipticity bounds, Lipschitz constants, and Poisson compensator are common to the family.
\end{proof}

\begin{lemma}[Continuity of the laws despite jumps]\label{lem:lawcontinuous}
Fix $P\in\Pcal$. Let
\[
(X^P,Y^P,Z^P,U^P,\mathsf K^P)
\]
be a square-integrable solution of the selected system. Then
\[
t\longmapsto\mu_t^P:=\Lcal^P(X_t^P),\qquad
t\longmapsto\eta_t^P:=\Lcal^P(Y_t^P)
\]
are continuous from $[0,T]$ into $\mathcal P_2$ for $W_2$. Moreover, for every deterministic $t>0$,
\[
X_t^P=X_{t-}^P,\qquad Y_t^P=Y_{t-}^P,
\quad P\text{-a.s.}
\]
\end{lemma}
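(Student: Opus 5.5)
The plan is to show that both processes are $L^2(P)$-continuous in time, i.e. $\E^P|X_t^P-X_s^P|^2\to0$ and $\E^P|Y_t^P-Y_s^P|^2\to0$ as $|t-s|\to0$. The $W_2$-continuity of the laws then follows from the natural coupling \eqref{eq:coupling}. The a.s. identities at deterministic times come from a separate observation: the jumps of $X^P$ and $Y^P$ are carried only by the Poisson random measure, whose compensator $\Pi(de)\,dt$ has no atoms in time.

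\emph{Forward process.} For $s<t$ I write the increment from \eqref{eq:FWD} as a drift, a $dB$-integral and an $\widetilde N^P$-integral over $(s,t]$. Cauchy--Schwarz handles the drift, and the isometries of Lemma~\ref{lem:mart} together with $a^P\le\overline a$ handle the two martingales. Using the linear growth from \ref{HF}, this gives
\[
\E^P|X_t^P-X_s^P|^2\le C\,\E^P\int_s^t\bigl(1+|X_{r-}^P|^2+M_2(\mu_{r-}^P)^2+|\alpha_r|^2\bigr)dr .
\]
The integrand is in $L^1(dt\otimes P)$ by Theorem~\ref{thm:Fwell} and $\alpha\in\mathcal A^2$. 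Absolute continuity of the integral then gives uniform continuity, exactly as in the increment estimate in Step~1 of Theorem~\ref{thm:Fwell}.

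\emph{Backward process.} From \eqref{eq:BWD}, the increment $Y_t^P-Y_s^P$ is $-\int_s^t(F_r+\chi_r)\,dr$ plus the $Z$- and $U$-martingale increments on $(s,t]$. The same estimates give
\[
\E^P|Y_t^P-Y_s^P|^2\le C\,\E^P\int_s^t\bigl(|F_r|^2+|\chi_r|^2+|Z_r|_{a_r^P}^2+\|U_r\|_\Pi^2\bigr)dr .
\]
Every term is integrable by \eqref{eq:Bexistbound} and the bound on $\E^P\int|F|^2$ from Step~10 of Theorem~\ref{thm:Bwell}, so continuity again follows by absolute continuity of the integral.

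\emph{Deterministic times.} Fix $t>0$. Taking $s\uparrow t$ in the forward estimate, and using that $X_s^P\to X_{t-}^P$ a.s. while $\sup|X^P|\in L^2$, dominated convergence gives $\E^P|X_t^P-X_{t-}^P|^2=0$. The same argument applied to $Y^P$ (whose sup is square integrable) gives $Y_t^P=Y_{t-}^P$ a.s. Equivalently, $\Delta X_t^P=\int_E\beta(t,X^P_{t-},\mu^P_{t-},\alpha_t,e)\,N(\{t\},de)$, and $N(\{t\}\times E)=0$ a.s. because $\E^P N(\{t\}\times\{|e|>1/n\})=0$; likewise $\Delta Y_t^P=\int_E U_t^P(e)N(\{t\},de)$, since $B$ and $\mathsf K^P$ are continuous. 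This also gives $\mu_{t-}^P=\mu_t^P$ and is consistent with the continuity of the flows.

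The main obstacle I expect is in the $Y$ part. The generator contains $W_2(\eta_r^P,\delta_0)$, and the $W_2$-continuity of $\eta^P$ requires $\E^P\int_0^T|F_r|^2dr<\infty$ with that law term included. This holds by \eqref{eq:Bpicard-lawzero}, since $W_2(\eta_r,\delta_0)^2=\E^P|Y_r|^2\le\E^P\sup|Y|^2$. The rest is routine.
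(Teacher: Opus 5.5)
Your proof is correct and follows the paper's route: $L^2$ increment bounds from Cauchy--Schwarz and the isometries, absolute continuity of the Lebesgue integral, the natural coupling for $W_2$-continuity, and then identification of $X_{t-}$ with $X_t$. The only differences are minor. At the last step you use dominated convergence with the square-integrable supremum, where the paper uses uniqueness of limits in probability. Your remark that $N$ charges no fixed time is a valid independent argument. One notational point: in Step~10 of Theorem~\ref{thm:Bwell}, $F$ already contains $\chi$, so writing the drift as $F_r+\chi_r$ double-counts the source; the bound is unaffected.
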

\begin{proof}
For the forward process, stochastic isometries and Cauchy--Schwarz give, for $s<t$,
\[
\E^P|X_t-X_s|^2\le3(t-s)\E^P\int_s^t|b_u|^2du
+3\E^P\int_s^t\operatorname{Tr}(\sigma_u a_u^P\sigma_u^\top)du
+3\E^P\int_s^t\|\beta_u\|_\Pi^2du.
\]
All integrands on the right-hand side are integrable on $[0,T]$. For the backward component, the differential form of the equation gives, for $0\le s<t\le T$,
\[
Y_t-Y_s=-\int_s^t(f_u+h_u)du
+\int_s^t Z_u\,dB_u
+\int_s^t\int_EU_u(e)\widetilde N^P(du,de),
\]
where $h_u=\langle\varrho,\Gamma_u\rangle$. By $(a+b+c)^2\le3(a^2+b^2+c^2)$, Cauchy--Schwarz for the time integral, the continuous martingale isometry, and the Poisson isometry,
\begin{align}
\E^P|Y_t-Y_s|^2
\le{}&3(t-s)\E^P\int_s^t|f_u+h_u|^2du
+3\E^P\int_s^t|Z_u|_{a_u^P}^2du\nonumber\\
&+3\E^P\int_s^t\|U_u\|_\Pi^2du.
\label{eq:lawcontinuity-Y}
\end{align}
Moreover,
\[
|f_u+h_u|^2\le2|f_u|^2+2|\varrho|^2|\Gamma_u|^2.
\]
The Lipschitz growth of $f$, the linear growth of $A^0$, and the estimates already established imply
\[
\E^P\int_0^T|f_u|^2du
+\E^P\int_0^T|\Gamma_u|^2du
+\E^P\int_0^T|Z_u|_{a_u^P}^2du
+\E^P\int_0^T\|U_u\|_\Pi^2du<\infty.
\]
Hence all four time-integrated terms appearing in the estimates for $X$ and $Y$ belong to $L^1(0,T)$. Absolute continuity of the Lebesgue integral then implies that the right-hand sides tend to zero as $t-s\to0$. Thus
\[
\E^P|X_t-X_s|^2\longrightarrow0,
\qquad
\E^P|Y_t-Y_s|^2\longrightarrow0,
\]
which proves $L^2$ continuity at deterministic times.

For the laws, $(X_t,X_s)$ is a coupling of $(\mu_t^P,\mu_s^P)$ and $(Y_t,Y_s)$ is a coupling of $(\eta_t^P,\eta_s^P)$. Hence
\[
W_2^2(\mu_t^P,\mu_s^P)\le\E^P|X_t-X_s|^2,
\qquad
W_2^2(\eta_t^P,\eta_s^P)\le\E^P|Y_t-Y_s|^2,
\]
so both law flows are $W_2$-continuous.

Finally, fix deterministic $t>0$ and a sequence $s_n\uparrow t$. Since $X$ and $Y$ are c\`adl\`ag,
\[
X_{s_n}\to X_{t-},\qquad Y_{s_n}\to Y_{t-}
\quad P\text{-a.s.}
\]
At the same time, $L^2$ continuity gives
\[
X_{s_n}\to X_t,\qquad Y_{s_n}\to Y_t
\quad\text{in }L^2(P),
\]
hence in probability. Uniqueness of the limit in probability implies
\[
X_t=X_{t-},\qquad Y_t=Y_{t-},\quad P\text{-a.s.}
\]
This statement at deterministic times does not mean that the sample paths are continuous at their random jump times.
\end{proof}

\section{Stability under Perturbations of the Data}

This section works in the quadratic framework $\alpha\in\mathcal A^2$ under Assumptions~\ref{HF}, \ref{HB}, and \ref{HD}. The higher-moment assumption \ref{HqJ} is not needed here.

We keep $P$, the filtration, and the integrators $(B,N)$ fixed. Stability when the integrators and filtrations vary is studied in the preprint by Papapantoleon, Saplaouras, and Theodorakopoulos \cite{PapapantoleonEtAl2025}.

We keep the dependence on $\lambda$ explicit in order to identify the origin of the factor $\lambda^{-2}$ in the Yosida source.

Consider two data sets
\[
\mathfrak D=(\xi,\alpha,b,\sigma,\beta,f,g,A),\qquad
\bar{\mathfrak D}=(\bar\xi,\bar\alpha,\bar b,\bar\sigma,
\bar\beta,\bar f,\bar g,\bar A),
\]
on the same stochastic basis under $P$. Their coefficients are assumed to satisfy Assumptions~\ref{HF}--\ref{HD} with the same structural constants. Objects associated with the second data set carry a bar. For $\lambda>0$, the two operators are replaced by their Yosida approximations $A^\lambda$ and $\bar A^\lambda$.

The forward residual, evaluated along the second solution, is
\begin{align}
\mathfrak R_F(P):={}&\E^P|\xi-\bar\xi|^2
+\E^P\int_0^T|\alpha_t-\bar\alpha_t|^2dt\nonumber\\
&+\E^P\int_0^T\Bigl[
|b(t,\bar X_{t-},\bar\mu_{t-},\bar\alpha_t)
-\bar b(t,\bar X_{t-},\bar\mu_{t-},\bar\alpha_t)|^2\nonumber\\
&\qquad+\|\sigma(t,\bar X_{t-},\bar\mu_{t-},\bar\alpha_t)
-\bar\sigma(t,\bar X_{t-},\bar\mu_{t-},\bar\alpha_t)\|^2\nonumber\\
&\qquad+\|\beta(t,\bar X_{t-},\bar\mu_{t-},\bar\alpha_t,\cdot)
-\bar\beta(t,\bar X_{t-},\bar\mu_{t-},\bar\alpha_t,\cdot)
\|_{L_\Pi^2}^2\Bigr]dt.
\label{eq:dataRF}
\end{align}
For the regularized backward part, set
\begin{align}
\mathfrak R_B^\lambda(P):={}&
\E^P|g(\bar X_T,\bar\mu_T)-\bar g(\bar X_T,\bar\mu_T)|^2
\nonumber\\
&+\E^P\int_0^T\bigl|f(t,\bar X_{t-},\bar\mu_{t-},
\bar Y_t,\bar Z_t,\bar U_t,\bar\eta_t,\bar\alpha_t)\nonumber\\
&\hspace{34mm}-\bar f(t,\bar X_{t-},\bar\mu_{t-},
\bar Y_t,\bar Z_t,\bar U_t,\bar\eta_t,\bar\alpha_t)
\bigr|^2dt\nonumber\\
&+\E^P\int_0^T|A^\lambda(\bar X_{t-},\bar\mu_{t-},\bar\alpha_t)
-\bar A^\lambda(\bar X_{t-},\bar\mu_{t-},\bar\alpha_t)|^2dt.
\label{eq:dataRB}
\end{align}
In \eqref{eq:dataRB}, $(\bar Y,\bar Z,\bar U)$ denotes the regularized solution of the second system with the same parameter $\lambda$.

\begin{theorem}[Strong stability with fixed integrators]\label{thm:datastability}
Fix $P\in\Pcal$ and $\lambda>0$. Assume \ref{HF}, \ref{HB}, and \ref{HD}. Let two data sets be defined on the same stochastic basis, with the same filtration and integrators, and satisfy the same structural constants; let $\alpha,\bar\alpha\in\mathcal A^2$. Then the corresponding regularized solutions satisfy
\begin{align}
\E^P\sup_{t\le T}|X_t-\bar X_t|^2
&\le C\mathfrak R_F(P),
\label{eq:dataXstab}\\
\E^P\Biggl[\sup_{t\le T}|Y_t-\bar Y_t|^2
&+\sup_{t\le T}|\mathsf K_t^\lambda-\bar{\mathsf K}_t^\lambda|^2
+\int_0^T|Z_t-\bar Z_t|_{a_t^P}^2dt\nonumber\\
&+\int_0^T\|U_t-\bar U_t\|_\Pi^2dt\Biggr]
\le C\Bigl\{(1+\lambda^{-2})\mathfrak R_F(P)+\mathfrak R_B^\lambda(P)\Bigr\}.
\label{eq:dataYstab}
\end{align}
The constant $C$ is independent of $P$, $\lambda$, and the two data sets.
\end{theorem}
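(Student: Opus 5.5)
The proof will exploit the triangular structure and proceed in two stages: first the forward estimate, then the backward estimate, with the Yosida source handled through Proposition~\ref{prop:Yosida}.

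\emph{Forward step.} Subtract the two McKean--Vlasov equations and split each coefficient difference by adding and subtracting the first-system coefficient along the second solution. For the drift this reads
\[
b(t,X_{t-},\mu_{t-},\alpha_t)-\bar b(t,\bar X_{t-},\bar\mu_{t-},\bar\alpha_t)
=\bigl[b(t,X_{t-},\mu_{t-},\alpha_t)-b(t,\bar X_{t-},\bar\mu_{t-},\bar\alpha_t)\bigr]
+\bigl[b-\bar b\bigr](t,\bar X_{t-},\bar\mu_{t-},\bar\alpha_t),
\]
and $\sigma,\beta$ are split the same way. The first bracket is bounded through \eqref{eq:FLip} by $|\Delta X_{t-}|+W_2(\mu_{t-},\bar\mu_{t-})+|\Delta\alpha_t|$. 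The second bracket is exactly the integrand of $\mathfrak R_F(P)$. Applying Cauchy--Schwarz to the drift and Lemma~\ref{lem:mart} to both martingales, as in Lemma~\ref{lem:Fstab}, and then using \eqref{eq:coupling} in the form $W_2^2(\mu_{s-},\bar\mu_{s-})\le\E^P|\Delta X_{s-}|^2\le u(s)$ with $u(t)=\E^P\sup_{r\le t}|\Delta X_r|^2$, we obtain
\[
u(t)\le C\mathfrak R_F(P)+C\int_0^tu(s)ds.
\]
The Gronwall form \eqref{eq:GronwallExplicit} then gives \eqref{eq:dataXstab}, and as a by-product $\sup_tW_2^2(\mu_t,\bar\mu_t)\le C\mathfrak R_F(P)$.

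\emph{Backward step.} Take the exogenous terms to be $\chi=\langle\varrho,A^\lambda(X_-,\mu_-,\alpha)\rangle$ and $\bar\chi=\langle\varrho,\bar A^\lambda(\bar X_-,\bar\mu_-,\bar\alpha)\rangle$. Lemma~\ref{lem:Bstab} cannot be applied verbatim because the generators $f$ and $\bar f$ differ. We therefore repeat its proof, again inserting the first-system function along the barred solution, with the decomposition $f(\cdot)-\bar f(\bar\cdot)=[f(\cdot)-f(\bar\cdot)]+[f-\bar f](\bar\cdot)$, and similarly for $g$. The Lipschitz parts are handled exactly as in Lemma~\ref{lem:Bstab}, including the law term through the natural coupling. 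The residual parts contribute the $g$ and $f$ terms of $\mathfrak R_B^\lambda(P)$. This yields a bound of the form
\[
C\Bigl(\mathfrak R_B^{g,f}+\E^P\!\int_0^T\bigl(|\Delta X|^2+W_2^2+|\Delta\alpha|^2+|\chi-\bar\chi|^2\bigr)dt\Bigr).
\]
The terms in $\Delta X$, $W_2$ and $\Delta\alpha$ are $\le C\mathfrak R_F(P)$ by the first step and the definition of $\mathfrak R_F$. A subtle point is that the same cut-and-insert trick is used in the terminal term: $\E^P|g(X_T,\mu_T)-\bar g(\bar X_T,\bar\mu_T)|^2$ is bounded by $C\mathfrak R_F(P)$ plus the $g$ part of $\mathfrak R_B^\lambda$.

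\emph{The source term, which I expect to be the main obstacle.} It is also where the factor $\lambda^{-2}$ originates. We split
\[
A^\lambda(X_-,\mu_-,\alpha)-\bar A^\lambda(\bar X_-,\bar\mu_-,\bar\alpha)
=\bigl[A^\lambda(X_-,\mu_-,\alpha)-A^\lambda(\bar X_-,\bar\mu_-,\bar\alpha)\bigr]
+\bigl[A^\lambda-\bar A^\lambda\bigr](\bar X_-,\bar\mu_-,\bar\alpha).
\]
The second bracket is the last term of $\mathfrak R_B^\lambda(P)$. For the first bracket we use \eqref{eq:AYlip}, whose Lipschitz constant in $(x,a)$ is $\lambda^{-1}$ but in $\mu$ is only $L_c$, uniformly in $\lambda$. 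Squaring, this bracket is bounded by
\[
C\lambda^{-2}\bigl(|\Delta X|^2+L_h^2|\Delta\alpha|^2\bigr)+CW_2^2,
\]
so its integral is at most $C(1+\lambda^{-2})\mathfrak R_F(P)$. The care needed here is to check that the constant in \eqref{eq:AYlip} involves only $L_h$ and $L_c$, which is required for $C$ to be independent of $\lambda$. One must also check that $\bar A$ has a structure of the form \eqref{eq:A}, with common constants, so that \eqref{eq:AYlip} applies to $A$ along both arguments. Only $A$ is applied along both arguments, so this holds.

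Finally, $\sup_t|\mathsf K^\lambda_t-\bar{\mathsf K}^\lambda_t|^2\le T|\varrho|^2\int_0^T|A^\lambda-\bar A^\lambda|^2dt$ by Cauchy--Schwarz, and this is controlled by the same quantity. Collecting the estimates gives \eqref{eq:dataYstab}. All constants are uniform in $P$ by the common bounds $\underline a,\overline a$ and the common compensator $\Pi(de)dt$, exactly as in Lemmas~\ref{lem:Fstab} and~\ref{lem:Bstab}.
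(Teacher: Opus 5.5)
Your proposal is correct and follows essentially the same route as the paper. Both arguments run a forward Gronwall estimate with residuals evaluated along the barred solution, and both use add-and-subtract decompositions for $g$, $f$ and $A^\lambda$, where \eqref{eq:AYlip} produces the $(1+\lambda^{-2})$ factor. The backward part is then a weighted It\^o energy estimate with $\gamma$ independent of $\lambda$, followed by a time-supremum bound and Cauchy--Schwarz for $\mathsf K^\lambda-\bar{\mathsf K}^\lambda$.
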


\begin{proof}
\noindent\emph{Step 1: stability of the forward equation.}
We first subtract the two forward equations without abbreviations. For every $t\in[0,T]$, $P$-a.s.,
\begin{align*}
X_t-\bar X_t
={}&\xi-\bar\xi\\
&+\int_0^t\Bigl[b(s,X_{s-},\mu_{s-},\alpha_s)-\bar b(s,\bar X_{s-},\bar\mu_{s-},\bar\alpha_s)\Bigr]ds\\
&+\int_0^t\Bigl[\sigma(s,X_{s-},\mu_{s-},\alpha_s)-\bar\sigma(s,\bar X_{s-},\bar\mu_{s-},\bar\alpha_s)\Bigr]dB_s\\
&+\int_0^t\int_E\Bigl[\beta(s,X_{s-},\mu_{s-},\alpha_s,e)-\bar\beta(s,\bar X_{s-},\bar\mu_{s-},\bar\alpha_s,e)\Bigr]\widetilde N^P(ds,de).
\end{align*}
For the estimates, define
\begin{align*}
\Delta X_t&:=X_t-\bar X_t,\qquad
\Delta Y_t:=Y_t-\bar Y_t,
&& t\in[0,T],\quad P\text{-a.s.},\\
\Delta Z_t&:=Z_t-\bar Z_t,\qquad
\Delta\alpha_t:=\alpha_t-\bar\alpha_t,
&& dt\otimes P\text{-a.e.},\\
\Delta U_t(e)&:=U_t(e)-\bar U_t(e),
&& dt\otimes P\otimes\Pi(de)\text{-a.e.},\\
\Delta\xi&:=\xi-\bar\xi,
&& P\text{-a.s.}
\end{align*}
For the drift,
\[
\Delta b_t=
\underbrace{b(t,X_{t-},\mu_{t-},\alpha_t)-b(t,\bar X_{t-},\bar\mu_{t-},\bar\alpha_t)}_{=:\delta b_t}
+r_t^b,
\]
where
\[
r_t^b=b(t,\bar X_{t-},\bar\mu_{t-},\bar\alpha_t)
-\bar b(t,\bar X_{t-},\bar\mu_{t-},\bar\alpha_t).
\]
Define $\delta\sigma_t,r_t^\sigma$ and $\delta\beta_t,r_t^\beta$ similarly. Then
\[
|\delta b_t|^2+\|\delta\sigma_t\|^2+\|\delta\beta_t\|_{L_\Pi^2}^2
\le C\bigl(|\Delta X_{t-}|^2+W_2^2(\mu_{t-},\bar\mu_{t-})+|\Delta\alpha_t|^2\bigr).
\]
Moreover, the coupling $(X_t,\bar X_t)$ under $P$ gives
\[
W_2^2(\mu_t,\bar\mu_t)\le\E^P|\Delta X_t|^2.
\]
The difference equation is, for every $t\in[0,T]$, $P$-a.s.,
\[
\Delta X_t=\Delta\xi+
\int_0^t(\delta b_s+r_s^b)ds+
\int_0^t(\delta\sigma_s+r_s^\sigma)dB_s+
\int_0^t\int_E(\delta\beta_s(e)+r_s^\beta(e))\widetilde N^P(ds,de).
\]
As in Lemma~\ref{lem:Fstab}, Cauchy--Schwarz for the drift and Lemma~\ref{lem:mart} for the two martingales give
\begin{align*}
\E^P\sup_{r\le t}|\Delta X_r|^2
\le{}&C\E^P|\Delta\xi|^2+C\E^P\int_0^t|\Delta\alpha_s|^2ds\\
&+C\E^P\int_0^t(|r_s^b|^2+\|r_s^\sigma\|^2+\|r_s^\beta\|_{L_\Pi^2}^2)ds\\
&+C\int_0^t\E^P\sup_{u\le s}|\Delta X_u|^2ds.
\end{align*}
The first group of terms is bounded by $C\mathfrak R_F(P)$. Gronwall's inequality yields
\[
\E^P\sup_{t\le T}|\Delta X_t|^2\le C\mathfrak R_F(P),
\]
which is \eqref{eq:dataXstab}.

\noindent\emph{Step 2: Yosida source and cumulative process.}
The source difference is first written explicitly as
\begin{align*}
&A^\lambda(X_{t-},\mu_{t-},\alpha_t)
-\bar A^\lambda(\bar X_{t-},\bar\mu_{t-},\bar\alpha_t)\\
&\quad= A^\lambda(X_{t-},\mu_{t-},\alpha_t)
-A^\lambda(\bar X_{t-},\bar\mu_{t-},\bar\alpha_t)\\
&\qquad+ A^\lambda(\bar X_{t-},\bar\mu_{t-},\bar\alpha_t)
-\bar A^\lambda(\bar X_{t-},\bar\mu_{t-},\bar\alpha_t).
\end{align*}
We then set
\[
r_t^{A,\lambda}:=A^\lambda(\bar X_{t-},\bar\mu_{t-},\bar\alpha_t)
-\bar A^\lambda(\bar X_{t-},\bar\mu_{t-},\bar\alpha_t),
\]
and
\[
\Delta A_t^\lambda:=A^\lambda(X_{t-},\mu_{t-},\alpha_t)
-\bar A^\lambda(\bar X_{t-},\bar\mu_{t-},\bar\alpha_t).
\]
Thus
\begin{align*}
\Delta A_t^\lambda
={}&A^\lambda(X_{t-},\mu_{t-},\alpha_t)
-A^\lambda(\bar X_{t-},\bar\mu_{t-},\bar\alpha_t)+r_t^{A,\lambda}.
\end{align*}
Estimate \eqref{eq:AYlip} gives
\begin{align*}
|\Delta A_t^\lambda|^2
\le{}&C\lambda^{-2}(|\Delta X_{t-}|^2+|\Delta\alpha_t|^2)
+C W_2^2(\mu_{t-},\bar\mu_{t-})+C|r_t^{A,\lambda}|^2.
\end{align*}
After integration, Step~1 and the definition of $\mathfrak R_B^\lambda(P)$ yield
\begin{equation}
\E^P\int_0^T|\Delta A_t^\lambda|^2dt
\le C\{(1+\lambda^{-2})\mathfrak R_F(P)+\mathfrak R_B^\lambda(P)\}.
\label{eq:datastab-source}
\end{equation}
Since
\[
\mathsf K_t^\lambda-\bar{\mathsf K}_t^\lambda
=\int_0^t\langle\varrho,\Delta A_s^\lambda\rangle ds,
\]
Cauchy--Schwarz gives
\begin{equation}
\E^P\sup_{t\le T}|\mathsf K_t^\lambda-\bar{\mathsf K}_t^\lambda|^2
\le T|\varrho|^2\E^P\int_0^T|\Delta A_s^\lambda|^2ds.
\label{eq:datastab-K}
\end{equation}

\noindent\emph{Step 3: terminal condition and backward generator.}
The terminal difference is
\[
g(X_T,\mu_T)-\bar g(\bar X_T,\bar\mu_T),
\]
which we decompose explicitly as
\begin{align*}
&g(X_T,\mu_T)-\bar g(\bar X_T,\bar\mu_T)\\
&\quad=g(X_T,\mu_T)-g(\bar X_T,\bar\mu_T)
+\bigl[g(\bar X_T,\bar\mu_T)-\bar g(\bar X_T,\bar\mu_T)\bigr].
\end{align*}
We now set
\[
\Delta\zeta:=g(X_T,\mu_T)-\bar g(\bar X_T,\bar\mu_T).
\]
By \eqref{eq:gLip}, the law coupling, and Step~1,
\begin{equation}
\E^P|\Delta\zeta|^2
\le C\mathfrak R_F(P)+C\mathfrak R_B^\lambda(P).
\label{eq:datastab-terminal-detail}
\end{equation}
For the generator, first write the complete difference
\begin{align*}
&f(t,X_{t-},\mu_{t-},Y_t,Z_t,U_t,\eta_t,\alpha_t)
-\bar f(t,\bar X_{t-},\bar\mu_{t-},\bar Y_t,\bar Z_t,\bar U_t,\bar\eta_t,\bar\alpha_t)\\
&\quad= f(t,X_{t-},\mu_{t-},Y_t,Z_t,U_t,\eta_t,\alpha_t)
-f(t,\bar X_{t-},\bar\mu_{t-},\bar Y_t,\bar Z_t,\bar U_t,\bar\eta_t,\bar\alpha_t)\\
&\qquad+ f(t,\bar X_{t-},\bar\mu_{t-},\bar Y_t,\bar Z_t,\bar U_t,\bar\eta_t,\bar\alpha_t)
-\bar f(t,\bar X_{t-},\bar\mu_{t-},\bar Y_t,\bar Z_t,\bar U_t,\bar\eta_t,\bar\alpha_t).
\end{align*}
Set
\[
\Delta f_t:=f(t,X_{t-},\mu_{t-},Y_t,Z_t,U_t,\eta_t,\alpha_t)
-\bar f(t,\bar X_{t-},\bar\mu_{t-},\bar Y_t,\bar Z_t,\bar U_t,\bar\eta_t,\bar\alpha_t),
\]
and
\[
r_t^f:=f(t,\bar X_{t-},\bar\mu_{t-},\bar Y_t,\bar Z_t,\bar U_t,\bar\eta_t,\bar\alpha_t)
-\bar f(t,\bar X_{t-},\bar\mu_{t-},\bar Y_t,\bar Z_t,\bar U_t,\bar\eta_t,\bar\alpha_t).
\]
Assumption~\ref{HB} gives
\begin{align}
|\Delta f_t|^2\le C\bigl(&|\Delta X_{t-}|^2+W_2^2(\mu_{t-},\bar\mu_{t-})+|\Delta\alpha_t|^2
+|\Delta Y_t|^2\nonumber\\
&+|\Delta Z_t|^2+\|\Delta U_t\|_\Pi^2
+W_2^2(\eta_t,\bar\eta_t)+|r_t^f|^2\bigr).
\label{eq:datastab-f-detail}
\end{align}
Since $(Y_t,\bar Y_t)$ is a coupling of $(\eta_t,\bar\eta_t)$,
\[
W_2^2(\eta_t,\bar\eta_t)\le\E^P|\Delta Y_t|^2.
\]

\noindent\emph{Step 4: BSDE energy estimate.}
Before using the abbreviations, the backward difference reads, for every $t\in[0,T]$, $P$-a.s.,
\begin{align*}
Y_t-\bar Y_t
={}&g(X_T,\mu_T)-\bar g(\bar X_T,\bar\mu_T)\\
&+\int_t^T\Bigl[
 f(s,X_{s-},\mu_{s-},Y_s,Z_s,U_s,\eta_s,\alpha_s)
-\bar f(s,\bar X_{s-},\bar\mu_{s-},\bar Y_s,\bar Z_s,\bar U_s,\bar\eta_s,\bar\alpha_s)\\
&\hspace{22mm}+\langle\varrho,
 A^\lambda(X_{s-},\mu_{s-},\alpha_s)
-\bar A^\lambda(\bar X_{s-},\bar\mu_{s-},\bar\alpha_s)\rangle\Bigr]ds\\
&-\int_t^T(Z_s-\bar Z_s)dB_s
-\int_t^T\int_E\bigl(U_s(e)-\bar U_s(e)\bigr)\widetilde N^P(ds,de).
\end{align*}
With the notation introduced above, this is exactly
\[
\Delta Y_t=\Delta\zeta+
\int_t^T\bigl(\Delta f_s+\langle\varrho,\Delta A_s^\lambda\rangle\bigr)ds
-\int_t^T\Delta Z_s dB_s
-\int_t^T\int_E\Delta U_s(e)\widetilde N^P(ds,de).
\]
Apply It\^o's formula with jumps to $e^{\gamma s}|\Delta Y_s|^2$ between $t$ and $T$. After localization and expectation,
\begin{align*}
&\E^Pe^{\gamma t}|\Delta Y_t|^2
+\E^P\int_t^Te^{\gamma s}
\bigl(\gamma|\Delta Y_s|^2+|\Delta Z_s|_{a_s^P}^2+\|\Delta U_s\|_\Pi^2\bigr)ds\\
&\quad=\E^Pe^{\gamma T}|\Delta\zeta|^2
+2\E^P\int_t^Te^{\gamma s}\Delta Y_s
\bigl(\Delta f_s+\langle\varrho,\Delta A_s^\lambda\rangle\bigr)ds.
\end{align*}
Ellipticity gives $|\Delta Z|^2\le\lambda_{\min}(\underline a)^{-1}|\Delta Z|_{a^P}^2$. Applying Young's inequality separately to the terms involving $\Delta Z$, $\Delta U$, $\Delta A^\lambda$, and the remaining residuals yields, for a constant $C$ independent of $\gamma$,
\begin{align*}
2|\Delta Y_s|\bigl(|\Delta f_s|+|\varrho|\,|\Delta A_s^\lambda|\bigr)
\le{}&C|\Delta Y_s|^2
+\frac14|\Delta Z_s|_{a_s^P}^2+\frac14\|\Delta U_s\|_\Pi^2\\
&+C\bigl(|\Delta X_{s-}|^2+W_2^2(\mu_{s-},\bar\mu_{s-})+|\Delta\alpha_s|^2\\
&\hspace{18mm}+|r_s^f|^2+|\Delta A_s^\lambda|^2\bigr)
+C\E^P|\Delta Y_s|^2.
\end{align*}
Choose $\gamma$ large enough to absorb the two contributions in $|\Delta Y|^2$, then take $t=0$. This choice depends only on the Lipschitz constants of $f$, the common ellipticity bounds, and $|\varrho|$; it is independent of $\lambda$. Thus the singularity $\lambda^{-2}$ remains confined to the source estimate \eqref{eq:datastab-source} and does not generate an exponential factor of the form $e^{c\lambda^{-2}T}$. Using \eqref{eq:datastab-terminal-detail}, Step~1, and \eqref{eq:datastab-source}, we obtain
\begin{align}
\sup_{t\le T}\E^P|\Delta Y_t|^2
+\E^P\int_0^T\bigl(|\Delta Z_t|_{a_t^P}^2+\|\Delta U_t\|_\Pi^2\bigr)dt
\le C\{(1+\lambda^{-2})\mathfrak R_F(P)+\mathfrak R_B^\lambda(P)\}.
\label{eq:datastab-energy}
\end{align}

\noindent\emph{Step 5: time supremum.}
From the difference equation,
\begin{align*}
\sup_{t\le T}|\Delta Y_t|
\le{}&|\Delta\zeta|+\int_0^T(|\Delta f_s|+|\varrho|\,|\Delta A_s^\lambda|)ds\\
&+2\sup_{t\le T}\left|\int_0^t\Delta Z_s dB_s\right|
+2\sup_{t\le T}\left|\int_0^t\int_E\Delta U_s(e)\widetilde N^P(ds,de)\right|.
\end{align*}
After squaring and taking expectations, Cauchy--Schwarz and Lemma~\ref{lem:mart} yield
\begin{align*}
\E^P\sup_{t\le T}|\Delta Y_t|^2
\le C\E^P\Bigl[&|\Delta\zeta|^2+
\int_0^T(|\Delta f_s|^2+|\Delta A_s^\lambda|^2)ds\\
&+\int_0^T(|\Delta Z_s|_{a_s^P}^2+\|\Delta U_s\|_\Pi^2)ds\Bigr].
\end{align*}
Steps~1--4 bound every term on the right by $C\{(1+\lambda^{-2})\mathfrak R_F(P)+\mathfrak R_B^\lambda(P)\}$. Together with \eqref{eq:datastab-K}, this proves \eqref{eq:dataYstab}. The final constant is uniform in $P$ and independent of $\lambda$; all $\lambda$-dependence is displayed explicitly through $1+\lambda^{-2}$.
\end{proof}

\begin{remark}[Origin of the $\lambda$-amplification]
The factor $\lambda^{-2}$ comes from $|A^\lambda(x)-A^\lambda(x')|\le\lambda^{-1}|x-x'|$ after squaring; the exponential weight in the BSDE estimate remains independent of $\lambda$.
\end{remark}

\begin{corollary}[Uniform modelwise passage to the limit]\label{cor:datauniform}
Fix $\lambda>0$ and let $(\mathfrak D^n)_n$ be a sequence of data sets satisfying the assumptions of Theorem~\ref{thm:datastability} uniformly. If
\[
\sup_{P\in\Pcal}\mathfrak R_F^n(P)\longrightarrow0,
\qquad
\sup_{P\in\Pcal}\mathfrak R_B^{n,\lambda}(P)\longrightarrow0,
\]
then
\begin{align*}
&\sup_{P\in\Pcal}\E^P\Biggl[
\sup_{t\le T}|X_t^n-X_t|^2+\sup_{t\le T}|Y_t^{n,\lambda}-Y_t^\lambda|^2
+\sup_{t\le T}|\mathsf K_t^{n,\lambda}-\mathsf K_t^\lambda|^2\\
&\hspace{32mm}+\int_0^T|Z_t^{n,\lambda}-Z_t^\lambda|_{a_t^P}^2dt
+\int_0^T\|U_t^{n,\lambda}-U_t^\lambda\|_\Pi^2dt
\Biggr]\longrightarrow0.
\end{align*}
\end{corollary}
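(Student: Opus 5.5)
The corollary follows from Theorem~\ref{thm:datastability} applied modelwise and then taking a supremum over $P\in\Pcal$. Fix $\lambda>0$ and $P\in\Pcal$. We compare the $n$-th data set $\mathfrak D^n$ with the limiting data set $\mathfrak D$ on the same stochastic basis under $P$, with the same filtration and integrators. Both satisfy Assumptions~\ref{HF}--\ref{HD} with common structural constants, so estimates \eqref{eq:dataXstab} and \eqref{eq:dataYstab} apply with a constant $C$ that depends neither on $P$, nor on $\lambda$, nor on $n$. The one point to check is the orientation of the residuals. In Theorem~\ref{thm:datastability}, $\mathfrak R_F$ and $\mathfrak R_B^\lambda$ are evaluated along the barred solution. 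We therefore take the limiting system as the barred one, or the $n$-th system, whichever matches the definition of $\mathfrak R_F^n(P)$ and $\mathfrak R_B^{n,\lambda}(P)$ in the hypothesis. Since the theorem is symmetric in the roles of the two data sets except for where the residuals are evaluated, we assign roles so that the residuals are exactly those appearing in the hypothesis.

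Adding \eqref{eq:dataXstab} and \eqref{eq:dataYstab} gives, for every $P$ and $n$,
\[
\E^P\Bigl[\sup_{t\le T}|X_t^n-X_t|^2+\sup_{t\le T}|Y_t^{n,\lambda}-Y_t^\lambda|^2+\sup_{t\le T}|\mathsf K_t^{n,\lambda}-\mathsf K_t^\lambda|^2+\int_0^T\bigl(|Z_t^{n,\lambda}-Z_t^\lambda|_{a_t^P}^2+\|U_t^{n,\lambda}-U_t^\lambda\|_\Pi^2\bigr)dt\Bigr]
\le C\bigl\{(2+\lambda^{-2})\mathfrak R_F^n(P)+\mathfrak R_B^{n,\lambda}(P)\bigr\}.
\]
The right-hand side depends on $P$ only through the residuals. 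Taking the supremum over $P$ on both sides therefore bounds the left-hand supremum by
\[
C(2+\lambda^{-2})\sup_P\mathfrak R_F^n(P)+C\sup_P\mathfrak R_B^{n,\lambda}(P),
\]
which tends to zero by hypothesis, since $\lambda$ is fixed. No measurability in $P$ is needed, because everything is a pointwise bound followed by a supremum of real numbers.

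The main obstacle is confirming that $C$ is uniform in $n$. The proof of Theorem~\ref{thm:datastability} uses only the Lipschitz constants, the ellipticity bounds $\underline a,\overline a$, $|\varrho|$, $T$, and the common compensator $\Pi$; it uses no norm of the data. The condition that the $\mathfrak D^n$ satisfy the assumptions uniformly means precisely that these constants are common to all $n$. A second point to verify is well-definedness: for each $n$ and $P$, the regularized system has a unique solution. This follows from Theorem~\ref{thm:well} with $A^0$ replaced by $A^\lambda$, because $A^\lambda$ is Lipschitz by \eqref{eq:AYlip} and dominated by $|A^0|$ by \eqref{eq:Aconv}, so it has linear growth.
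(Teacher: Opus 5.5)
Your proposal is correct and follows the paper's proof: apply \eqref{eq:dataXstab}--\eqref{eq:dataYstab} modelwise to the pair $(\mathfrak D^n,\mathfrak D)$, with the limiting data as the barred system so the residuals are evaluated along the limiting solution. Then use that the constant is common to $n$ and $P$ because the structural constants are uniform, and take the supremum over $P$ with $\lambda$ fixed. Your additional checks on role assignment and on existence of the regularized solutions via $|A^\lambda|\le|A^0|$ are consistent with the paper's setup and add no new difficulty.
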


\begin{proof}
Apply \eqref{eq:dataXstab}--\eqref{eq:dataYstab} to $(\mathfrak D^n,\mathfrak D)$. For fixed $\lambda$,
\begin{align*}
\sup_P\E^P\sup_{t\le T}|X_t^n-X_t|^2
&\le C\sup_P\mathfrak R_F^n(P),\\
\sup_P\E^P\Bigl[&\sup_{t\le T}|Y_t^{n,\lambda}-Y_t^\lambda|^2
+\sup_{t\le T}|\mathsf K_t^{n,\lambda}-\mathsf K_t^\lambda|^2\\
&+\int_0^T|Z_t^{n,\lambda}-Z_t^\lambda|_{a_t^P}^2dt
+\int_0^T\|U_t^{n,\lambda}-U_t^\lambda\|_\Pi^2dt\Bigr]\\
&\le C\left\{(1+\lambda^{-2})\sup_P\mathfrak R_F^n(P)
+\sup_P\mathfrak R_B^{n,\lambda}(P)\right\}.
\end{align*}
The two quantities on the right tend to zero by assumption. The constants are common to $n$ and $P$ because the structural constants of the data are uniform. Hence every term on the left tends to zero, proving the stated strong convergences.
\end{proof}

\begin{corollary}[Stability of the selected systems]\label{cor:selectedData}
For each $P\in\Pcal$, denote the selected solutions corresponding to the $n$th data set and the limiting data set by
\begin{align*}
&(X^{n,P},Y^{n,P},Z^{n,P},U^{n,P},\mathsf K^{n,P}),\\
&(X^P,Y^P,Z^P,U^P,\mathsf K^P).
\end{align*}
Assume
\[
\sup_{P\in\Pcal}\mathfrak R_F^n(P)\longrightarrow0,
\]
and define
\begin{align*}
\mathfrak R_{g}^n(P)&=
\E^P|g_n(X_T^P,\mu_T^P)-g(X_T^P,\mu_T^P)|^2,\\
\mathfrak R_{f}^n(P)&=
\E^P\int_0^T|f_n(t,X_{t-}^P,\mu_{t-}^P,Y_t^P,Z_t^P,U_t^P,
\eta_t^P,\alpha_t)\\
&\hspace{39mm}-f(t,X_{t-}^P,\mu_{t-}^P,Y_t^P,Z_t^P,U_t^P,
\eta_t^P,\alpha_t)|^2dt.
\end{align*}
If
\[
\sup_{P\in\Pcal}\mathfrak R_g^n(P)\to0,
\qquad
\sup_{P\in\Pcal}\mathfrak R_f^n(P)\to0,
\]
and
\begin{equation}
\sup_{P\in\Pcal}\E^P\int_0^T
|A_n^0(X_{t-}^{n,P},\mu_{t-}^{n,P},\alpha_t^n)
-A^0(X_{t-}^P,\mu_{t-}^P,\alpha_t)|^2dt\longrightarrow0,
\label{eq:selectedDataSource}
\end{equation}
then
\begin{align*}
\sup_{P\in\Pcal}\E^P\Biggl[&
\sup_{t\le T}|Y_t^{n,P}-Y_t^P|^2
+\sup_{t\le T}|\mathsf K_t^{n,P}-\mathsf K_t^P|^2\\
&+\int_0^T|Z_t^{n,P}-Z_t^P|_{a_t^P}^2dt
+\int_0^T\|U_t^{n,P}-U_t^P\|_\Pi^2dt\Biggr]\longrightarrow0.
\end{align*}
\end{corollary}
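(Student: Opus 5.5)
The plan is to apply the backward stability estimate of Lemma~\ref{lem:Bstab} modelwise, with the $n$th selected system as the first solution and the limiting selected system as the second. The difficulty is that Lemma~\ref{lem:Bstab} assumes a common generator $f$, while here $f_n\neq f$ and $g_n\neq g$. So I will first rewrite the $n$th BSDE with the limiting generator $f$ and push the discrepancy into the exogenous term. Fix $P$ and set
\[
\chi^n_t:=\langle\varrho,A_n^0(X_{t-}^{n,P},\mu_{t-}^{n,P},\alpha^n_t)\rangle+f_n(t,\Theta^n_t)-f(t,\Theta^n_t),
\]
where $\Theta^n_t$ is the full argument list of the $n$th solution. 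Then $(Y^{n,P},Z^{n,P},U^{n,P})$ solves a BSDE with generator $f$, terminal value $\zeta^n=g_n(X^{n,P}_T,\mu^{n,P}_T)$, and exogenous term $\chi^n$. Lemma~\ref{lem:Bstab} then bounds the left-hand quantities (without $\mathsf K$) by
\[
C\E^P\Bigl[|\zeta^n-\zeta|^2+\int_0^T\{|\Delta X|^2+W_2^2(\mu^n,\mu)+|\Delta\alpha|^2+|\chi^n-\chi|^2\}dt\Bigr],
\]
with $C$ independent of $P$ and $n$.

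I will split each residual at the limiting solution. For the terminal term,
\[
|\zeta^n-\zeta|\le|g_n(X^n_T,\mu^n_T)-g_n(X_T,\mu_T)|+|g_n(X_T,\mu_T)-g(X_T,\mu_T)|.
\]
The first piece is bounded by $L_B(|\Delta X_T|+W_2)$, since the structural constants are uniform in $n$. The second piece is exactly $\mathfrak R_g^n(P)$. The forward terms $|\Delta X|$, $W_2^2(\mu^n,\mu)\le\E^P|\Delta X|^2$, and $|\Delta\alpha|$ are all controlled by $C\mathfrak R^n_F(P)$, using \eqref{eq:dataXstab} (the forward step of Theorem~\ref{thm:datastability}, which does not involve $\lambda$).

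The main obstacle is $\chi^n-\chi$. The source part is exactly \eqref{eq:selectedDataSource}. The generator-mismatch part $f_n(\Theta^n)-f(\Theta^n)$, however, is evaluated along the $n$th solution, whereas $\mathfrak R_f^n$ is evaluated along the limiting solution. I will handle it with the triangle inequality:
\[
|f_n(\Theta^n)-f(\Theta^n)|\le|f_n(\Theta^n)-f_n(\Theta)|+|f_n(\Theta)-f(\Theta)|+|f(\Theta)-f(\Theta^n)|.
\]
The middle term gives $\mathfrak R_f^n$. The outer two terms are each at most $L_B$ times the differences in $(X,\mu,Y,Z,U,\eta,\alpha)$, which reintroduces $|\Delta Y|,|\Delta Z|,\|\Delta U\|$ with coefficient $2L_B$ on the right-hand side. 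These cannot be absorbed after the fact. To avoid the issue, I will instead use the other decomposition: subtract, inside the It\^o argument of Lemma~\ref{lem:Bstab}, the quantity $f_n(\Theta^n)-f(\Theta)=[f_n(\Theta^n)-f_n(\Theta)]+[f_n(\Theta)-f(\Theta)]$. The first bracket is Lipschitz with the uniform constant $L_B$ and can be absorbed by the weighted It\^o/Young argument exactly as in Steps~3--4 of Theorem~\ref{thm:datastability}, with the roles of $f$ and $\bar f$ arranged so that the residual $r^f$ is evaluated along the limiting solution. The second bracket is the residual $\mathfrak R_f^n$. Concretely, I will rerun that proof with $\bar{\mathfrak D}$ equal to the limiting data, but replace the Yosida source by the selected sources and the residual $r^{A,\lambda}$ by the full difference in \eqref{eq:selectedDataSource}. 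Since that difference is taken as a single exogenous term, no $\lambda^{-1}$ Lipschitz bound is needed, and no $\lambda$ appears.

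Finally, $\mathsf K^{n,P}-\mathsf K^P=\int_0^\cdot\langle\varrho,\cdot\rangle ds$, so Cauchy--Schwarz gives that its $\mathbb S^2$ norm is at most $T|\varrho|^2$ times the quantity in \eqref{eq:selectedDataSource}. The supremum of $Y$ is handled as in Step~5 of Theorem~\ref{thm:datastability}. Collecting terms gives a bound $C\{\mathfrak R^n_F+\mathfrak R^n_g+\mathfrak R^n_f+\text{(source term)}\}$ with $C$ uniform in $P$ and $n$. Taking $\sup_P$ and letting $n\to\infty$ concludes.
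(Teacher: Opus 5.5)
Your final argument is correct and is essentially the paper's proof. It has the same ingredients: the forward bound from \eqref{eq:dataXstab}; the splits of $g_n(X^{n,P}_T,\mu^{n,P}_T)-g(X^P_T,\mu^P_T)$ and of $f_n(\Theta^n)-f(\Theta)$ by adding and subtracting the $n$th coefficient along the limiting solution, so the residuals are exactly $\mathfrak R_g^n$ and $\mathfrak R_f^n$; the full difference in \eqref{eq:selectedDataSource} kept as one exogenous term, so no $\lambda^{-1}$ Lipschitz bound enters; Cauchy--Schwarz for $\mathsf K$; and a weighted It\^o--Young energy estimate followed by the supremum bound. You were also right that the first route through Lemma~\ref{lem:Bstab}, which evaluates $f_n-f$ along the $n$th solution, does not close, and the paper does not use it.
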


\begin{proof}
Fix $P\in\Pcal$. The two selected systems are compared first without abbreviations. Their forward components differ by $X^{n,P}-X^P$, their backward components by $Y^{n,P}-Y^P$, their martingale integrands by $Z^{n,P}-Z^P$ and $U^{n,P}-U^P$, and their controls by $\alpha^n-\alpha$. For the estimates, define
\begin{align*}
\Delta X_t&:=X_t^{n,P}-X_t^P,\qquad
\Delta Y_t:=Y_t^{n,P}-Y_t^P,
&& t\in[0,T],\quad P\text{-a.s.},\\
\Delta Z_t&:=Z_t^{n,P}-Z_t^P,\qquad
\Delta\alpha_t:=\alpha_t^n-\alpha_t,
&& dt\otimes P\text{-a.e.},\\
\Delta U_t(e)&:=U_t^{n,P}(e)-U_t^P(e),
&& dt\otimes P\otimes\Pi(de)\text{-a.e.}
\end{align*}
The forward estimate \eqref{eq:dataXstab} gives
\begin{equation}
\E^P\sup_{t\le T}|\Delta X_t|^2
\le C\mathfrak R_F^n(P).
\label{eq:selected-forward-detail}
\end{equation}

For the terminal condition, define
\[
\Delta\zeta_n:=g_n(X_T^{n,P},\mu_T^{n,P})-g(X_T^P,\mu_T^P),
\qquad P\text{-a.s.}
\]
and add and subtract $g_n(X_T^P,\mu_T^P)$:
\begin{align*}
\Delta\zeta_n
&=\bigl[g_n(X_T^{n,P},\mu_T^{n,P})
-g_n(X_T^P,\mu_T^P)\bigr]\\
&\quad+\bigl[g_n(X_T^P,\mu_T^P)-g(X_T^P,\mu_T^P)\bigr].
\end{align*}
Because the Lipschitz constants of $g_n$ are uniform in $n$,
\[
\E^P|\Delta\zeta_n|^2
\le C\mathfrak R_F^n(P)+2\mathfrak R_g^n(P).
\label{eq:selected-terminal-detail}
\]

For the generator, write
\begin{align*}
\Delta f_t^n
={}&f_n(t,X_{t-}^{n,P},\mu_{t-}^{n,P},Y_t^{n,P},
Z_t^{n,P},U_t^{n,P},\eta_t^{n,P},\alpha_t^n)\\
&-f(t,X_{t-}^{P},\mu_{t-}^{P},Y_t^{P},
Z_t^{P},U_t^{P},\eta_t^{P},\alpha_t).
\end{align*}
Add and subtract
\[
f_n(t,X_{t-}^{P},\mu_{t-}^{P},Y_t^{P},
Z_t^{P},U_t^{P},\eta_t^{P},\alpha_t).
\]
Then
\[
\Delta f_t^n=\delta f_t^n+r_t^{f,n},
\]
where $r_t^{f,n}$ is precisely the residual entering $\mathfrak R_f^n(P)$. By the uniform Lipschitz continuity of $f_n$,
\begin{align}
|\delta f_t^n|^2
\le C\bigl(&|\Delta X_{t-}|^2
+W_2^2(\mu_{t-}^{n,P},\mu_{t-}^{P})
+|\alpha_t^n-\alpha_t|^2
+|\Delta Y_t|^2+|\Delta Z_t|^2\nonumber\\
&+\|\Delta U_t\|_\Pi^2
+W_2^2(\eta_t^{n,P},\eta_t^P)\bigr).
\label{eq:selected-generator-detail}
\end{align}
The natural couplings give
\[
W_2^2(\mu_t^{n,P},\mu_t^P)\le\E^P|\Delta X_t|^2,
\qquad
W_2^2(\eta_t^{n,P},\eta_t^P)\le\E^P|\Delta Y_t|^2.
\]

Finally, set
\[
\Delta A_t^{0,n}:=
A_n^0(X_{t-}^{n,P},\mu_{t-}^{n,P},\alpha_t^n)
-A^0(X_{t-}^{P},\mu_{t-}^{P},\alpha_t).
\]
By assumption,
\[
\sup_{P\in\Pcal}\E^P\int_0^T|\Delta A_t^{0,n}|^2dt\longrightarrow0.
\]
The difference of the cumulative sources satisfies
\[
\mathsf K_t^{n,P}-\mathsf K_t^P
=\int_0^t\langle\varrho,\Delta A_s^{0,n}\rangle ds,
\]
so
\begin{equation}
\E^P\sup_{t\le T}|\mathsf K_t^{n,P}-\mathsf K_t^P|^2
\le T|\varrho|^2\E^P\int_0^T|\Delta A_s^{0,n}|^2ds.
\label{eq:selected-K-detail}
\end{equation}

Apply It\^o's formula with jumps to $e^{\gamma t}|\Delta Y_t|^2$. Using \eqref{eq:selected-generator-detail}, ellipticity $|\Delta Z_t|^2\le\lambda_{\min}(\underline a)^{-1}|\Delta Z_t|_{a_t^P}^2$, and Young's inequality, choose $\gamma$ large enough, independently of $n$ and $P$, to absorb the terms in $|\Delta Y|^2$, $|\Delta Z|_{a^P}^2$, and $\|\Delta U\|_\Pi^2$. This gives
\begin{align}
&\sup_{t\le T}\E^P|\Delta Y_t|^2
+\E^P\int_0^T\bigl(
|\Delta Z_t|_{a_t^P}^2+\|\Delta U_t\|_\Pi^2\bigr)dt\nonumber\\
&\quad\le C\biggl\{
\mathfrak R_F^n(P)+\mathfrak R_g^n(P)+\mathfrak R_f^n(P)
+\E^P\int_0^T|\Delta A_t^{0,n}|^2dt
\biggr\}.
\label{eq:selected-energy-detail}
\end{align}
To control the time supremum, we first display the backward subtraction in its original variables:
\begin{align*}
Y_t^{n,P}-Y_t^P
={}&g_n(X_T^{n,P},\mu_T^{n,P})-g(X_T^P,\mu_T^P)\\
&+\int_t^T\Bigl[
 f_n(s,X_{s-}^{n,P},\mu_{s-}^{n,P},Y_s^{n,P},Z_s^{n,P},U_s^{n,P},\eta_s^{n,P},\alpha_s^n)\\
&\hspace{20mm}-f(s,X_{s-}^{P},\mu_{s-}^{P},Y_s^{P},Z_s^{P},U_s^{P},\eta_s^{P},\alpha_s)\\
&\hspace{20mm}+\langle\varrho,
 A_n^0(X_{s-}^{n,P},\mu_{s-}^{n,P},\alpha_s^n)
-A^0(X_{s-}^{P},\mu_{s-}^{P},\alpha_s)\rangle\Bigr]ds\\
&-\int_t^T\bigl(Z_s^{n,P}-Z_s^P\bigr)dB_s\\
&-\int_t^T\int_E\bigl(U_s^{n,P}(e)-U_s^P(e)\bigr)\widetilde N^P(ds,de),
\qquad t\in[0,T],\quad P\text{-a.s.}
\end{align*}
Using the abbreviations already introduced, the same identity is
\begin{align*}
\Delta Y_t
={}&\Delta\zeta_n+\int_t^T
\bigl(\delta f_s^n+r_s^{f,n}
+\langle\varrho,\Delta A_s^{0,n}\rangle\bigr)ds\\
&-\int_t^T\Delta Z_s\,dB_s
-\int_t^T\int_E\Delta U_s(e)\widetilde N^P(ds,de),
\qquad t\in[0,T],\quad P\text{-a.s.}
\end{align*}
By the triangle inequality,
\begin{align*}
\sup_{t\le T}|\Delta Y_t|
\le{}&|\Delta\zeta_n|
+\int_0^T\bigl(|\delta f_s^n|+|r_s^{f,n}|
+|\varrho|\,|\Delta A_s^{0,n}|\bigr)ds\\
&+2\sup_{t\le T}\left|\int_0^t\Delta Z_s\,dB_s\right|
+2\sup_{t\le T}\left|\int_0^t\int_E\Delta U_s(e)
\widetilde N^P(ds,de)\right|.
\end{align*}
After squaring, applying Cauchy--Schwarz in time, and using Lemma~\ref{lem:mart},
\begin{align*}
\E^P\sup_{t\le T}|\Delta Y_t|^2
\le C\E^P\Biggl[&
|\Delta\zeta_n|^2
+\int_0^T\bigl(|\delta f_s^n|^2+|r_s^{f,n}|^2
+|\Delta A_s^{0,n}|^2\bigr)ds\\
&+\int_0^T\bigl(
|\Delta Z_s|_{a_s^P}^2+\|\Delta U_s\|_\Pi^2\bigr)ds
\Biggr].
\end{align*}
Estimate \eqref{eq:selected-generator-detail}, the two coupling inequalities, \eqref{eq:selected-forward-detail}, \eqref{eq:selected-terminal-detail}, and the energy estimate \eqref{eq:selected-energy-detail} control all terms on the right. Therefore,
\begin{align*}
&\E^P\sup_{t\le T}|\Delta Y_t|^2
+\E^P\int_0^T\bigl(
|\Delta Z_t|_{a_t^P}^2+\|\Delta U_t\|_\Pi^2\bigr)dt\\
&\quad\le C\biggl\{
\mathfrak R_F^n(P)+\mathfrak R_g^n(P)+\mathfrak R_f^n(P)
+\E^P\int_0^T|\Delta A_t^{0,n}|^2dt
\biggr\}.
\end{align*}
Adding \eqref{eq:selected-K-detail} and taking the supremum over $P\in\Pcal$, every term on the right tends to zero by assumption, proving the stated convergence.
\end{proof}

Condition \eqref{eq:selectedDataSource} does not follow from the Lipschitz assumptions on the forward equation alone because $A^0$ may be discontinuous on the moving set $\Sigma(\alpha_t)$. It must be verified through a non-contact condition, an occupation estimate, or uniform convergence of the selections.

\section{Yosida Approximation and Graph Closedness}

Yosida regularization first provides a Lipschitz approximation in the general monotone framework. For the ALA prototype, occupation of the interface then yields a quantitative rate. Minty-type closedness identifies weak limits when the state, its law, and the control---and therefore the moving interface---vary.

Lemma~\ref{lem:lawcontinuous} ensures, in particular, continuity of the law flows at deterministic times. We retain left limits below because they provide predictable integrands in the jump setting.

For $\lambda>0$, replace \eqref{eq:Kdef} by
\begin{equation}
\mathsf K_t^{\lambda,P}
=\int_0^t\langle\varrho,
A^\lambda(X_{s-}^P,\mu_{s-}^P,\alpha_s)\rangle ds,
\qquad t\in[0,T],\quad P\text{-a.s.}
\label{eq:Klambda}
\end{equation}
and let $(Y^{\lambda,P},Z^{\lambda,P},U^{\lambda,P})$ denote the corresponding backward solution, with
\[
\eta_t^{P,\lambda}=\Lcal^P(Y_t^{\lambda,P}).
\]
The regularized solution exists and is unique by the construction in Theorem~\ref{thm:Bwell}. Its data satisfy the required conditions because
\[
\chi_t^{\lambda,P}=\langle\varrho,A^\lambda(X_{t-}^P,\mu_{t-}^P,\alpha_t)\rangle,
\qquad |\chi_t^{\lambda,P}|^2\le|\varrho|^2|A^0(X_{t-}^P,\mu_{t-}^P,\alpha_t)|^2.
\]
The resolvent is Borel measurable, hence $\chi^{\lambda,P}$ is predictable, and
\[
\sup_{P,\lambda>0}\E^P\int_0^T|\chi_t^{\lambda,P}|^2dt<\infty.
\]
For $F^{\lambda,P}=f(\cdot,X_-,\mu_-,\cdot,\alpha)+\chi^{\lambda,P}$,
\begin{align*}
\sup_{P,\lambda>0}\E^P\int_0^T
|F^{\lambda,P}(t,0,0,0,\delta_0)|^2dt&<\infty,\\
F^{\lambda,P}(t,v)-F^{\lambda,P}(t,v')
&=F^{P}(t,v)-F^{P}(t,v').
\end{align*}
Thus Theorem~\ref{thm:Bwell} applies with the same contraction and energy constants for every $\lambda>0$. The forward process is unchanged.

\begin{lemma}[Quadratic convergence of the resolvents]\label{lem:Jstrong}
Let $\alpha\in\mathcal A^2$. Under Assumptions~\ref{HF} and~\ref{HD}, for every $\lambda>0$,
\[
\sup_{P\in\Pcal}\E^P\int_0^T
|J_\lambda(X_{t-}^P,\mu_{t-}^P,\alpha_t)-X_{t-}^P|^2dt\le C\lambda^2,
\]
where $C$ is independent of $P$ and $\lambda$.
\end{lemma}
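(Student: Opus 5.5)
The bound follows from the displacement identity \eqref{eq:Yosida-displacement}, the domination in \eqref{eq:Aconv}, the growth bound on $A^0$ in Assumption~\ref{HD}, and the uniform second-moment estimate \eqref{eq:Fsecond}. No convergence argument is needed, only a pointwise inequality followed by integration.

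\emph{Pointwise bound.} Fix $P\in\Pcal$ and $\lambda>0$. For every $(t,\omega)$, \eqref{eq:Yosida-displacement} gives
\[
J_\lambda(X_{t-}^P,\mu_{t-}^P,\alpha_t)-X_{t-}^P=-\lambda A^\lambda(X_{t-}^P,\mu_{t-}^P,\alpha_t).
\]
Proposition~\ref{prop:Yosida} gives $|A^\lambda|\le|A^0|$ at the same point, and Assumption~\ref{HD} gives the growth of $A^0$. Together they yield, pointwise and with no exceptional set,
\[
|J_\lambda(X_{t-}^P,\mu_{t-}^P,\alpha_t)-X_{t-}^P|^2\le\lambda^2|A^0(X_{t-}^P,\mu_{t-}^P,\alpha_t)|^2\le3C_A^2\lambda^2\bigl(1+|X_{t-}^P|^2+|\alpha_t|^2\bigr).
\]
The integrand is predictable, since $J_\lambda$ is continuous by \eqref{eq:Jlip} and hence Borel.

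\emph{Integration and uniformity in $P$.} Integrate over $[0,T]$ and take $\E^P$, bounding $|X_{t-}^P|\le\sup_{s\le T}|X_s^P|$. This gives
\[
\E^P\int_0^T|J_\lambda-X_{t-}^P|^2dt\le3C_A^2\lambda^2\Bigl(T+T\,\E^P\sup_{t\le T}|X_t^P|^2+\E^P\int_0^T|\alpha_t|^2dt\Bigr).
\]
Apply \eqref{eq:Fsecond}. By Assumption~\ref{HD}, $\sup_P\E^P|\xi|^2<\infty$, and $\alpha\in\mathcal A^2$ means $\sup_P\E^P\int_0^T|\alpha_t|^2dt<\infty$. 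So the bracket is bounded uniformly in $P$. Taking the supremum over $P$ gives the claim with $C$ independent of $P$ and $\lambda$.

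\emph{Where care is needed.} The only point requiring attention is uniformity. The constant in \eqref{eq:Fsecond} is structural, and $C_A$ does not depend on the law argument because $c$ is bounded, so no moment of $\mu_{t-}^P$ enters. Since the bound is pointwise, no occupation assumption or regularity of the interface is involved.
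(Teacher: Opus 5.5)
Your proposal is correct and follows the paper's proof essentially step for step. It uses the displacement identity \eqref{eq:Yosida-displacement}, the domination $|A^\lambda|\le|A^0|$ from \eqref{eq:Aconv}, the linear growth of $A^0$ from Assumption~\ref{HD} with $(1+|x|+|a|)^2\le 3(1+|x|^2+|a|^2)$, and the uniform bound \eqref{eq:Fsecond} together with $\alpha\in\mathcal A^2$. Your additional remarks on predictability and on $C_A$ not depending on the law argument are accurate.
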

\begin{proof}
For $dt\otimes P$-a.e. $(t,\omega)$, the resolvent identity gives
\[
J_\lambda(X_{t-}^P,\mu_{t-}^P,\alpha_t)-X_{t-}^P
=-\lambda A^\lambda(X_{t-}^P,\mu_{t-}^P,\alpha_t).
\]
By the domination in \eqref{eq:Aconv},
\begin{align*}
&\E^P\int_0^T|J_\lambda(X_{t-}^P,\mu_{t-}^P,\alpha_t)-X_{t-}^P|^2dt\\
&\qquad=\lambda^2\E^P\int_0^T|A^\lambda(X_{t-}^P,\mu_{t-}^P,\alpha_t)|^2dt\\
&\qquad\le\lambda^2\E^P\int_0^T|A^0(X_{t-}^P,\mu_{t-}^P,\alpha_t)|^2dt.
\end{align*}
The growth of the minimal-norm selection yields
\[
|A^0(X_{t-}^P,\mu_{t-}^P,\alpha_t)|^2
\le C_A^2(1+|X_{t-}^P|+|\alpha_t|)^2
\le3C_A^2(1+|X_{t-}^P|^2+|\alpha_t|^2).
\]
Therefore,
\begin{align*}
&\sup_P\E^P\int_0^T|A^0(X_{t-}^P,\mu_{t-}^P,\alpha_t)|^2dt\\
&\qquad\le C\left(T+T\sup_P\E^P\sup_{t\le T}|X_t^P|^2
+\sup_P\E^P\int_0^T|\alpha_t|^2dt\right)<\infty
\end{align*}
by \eqref{eq:Fsecond} and $\alpha\in\mathcal A^2$. Multiplication by $\lambda^2$ gives the stated bound. No occupation assumption is used.
\end{proof}

\paragraph{Role of the quadratic resolvent estimate.}
Lemma~\ref{lem:Jstrong} isolates the geometric displacement created by regularization: $J_\lambda(X^P,\mu^P,\alpha)-X^P=O(\lambda)$ in $L^2(dt\otimes P)$, uniformly in $P$. The next lemma obtains convergence of the source directly from $|A^\lambda|\le|A^0|$. We keep the resolvent estimate separate because it quantifies the displacement of the graph argument and is used conceptually in the interface arguments of Section~4.1.

\begin{lemma}[Convergence under a fixed model]\label{lem:fixedY}
Let $\alpha\in\mathcal A^2$. Under Assumptions~\ref{HF}, \ref{HB}, and~\ref{HD}, for every fixed $P\in\Pcal$,
\[
\E^P\int_0^T
|A^\lambda(X_{t-}^P,\mu_{t-}^P,\alpha_t)
-A^0(X_{t-}^P,\mu_{t-}^P,\alpha_t)|^2dt\longrightarrow0
\qquad(\lambda\downarrow0).
\]
Consequently, \eqref{eq:Yconv} holds under this model $P$, without the supremum over $\Pcal$.
\end{lemma}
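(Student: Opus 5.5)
The argument is a dominated-convergence argument on $[0,T]\times\Omega$ under the fixed model $P$, followed by an appeal to the stability estimate for the backward equation. Fix $P$ and write $X=X^P$, $\mu=\mu^P$. Set
\[
\Delta_t^\lambda:=A^\lambda(X_{t-},\mu_{t-},\alpha_t)-A^0(X_{t-},\mu_{t-},\alpha_t).
\]
By \eqref{eq:Aconv}, applied pointwise at the point $(x,\mu,a)=(X_{t-}(\omega),\mu_{t-},\alpha_t(\omega))$, we have $\Delta_t^\lambda(\omega)\to0$ as $\lambda\downarrow0$ for every $(t,\omega)$. No regularity of the interface is needed here: the convergence $A^\lambda\to A^0$ holds at each fixed point of $\R^d\times\mathcal P_2(\R^d)\times D$, including points on $\Sigma(\alpha_t)$. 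The law argument $\mu_{t-}$ is deterministic and the same for every $\lambda$, since the forward process does not depend on $\lambda$.

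For the domination, the bound $|A^\lambda|\le|A^0|$ in \eqref{eq:Aconv} gives
\[
|\Delta_t^\lambda|^2\le4|A^0(X_{t-},\mu_{t-},\alpha_t)|^2\le12C_A^2\bigl(1+|X_{t-}|^2+|\alpha_t|^2\bigr).
\]
By \eqref{eq:Fsecond} and $\alpha\in\mathcal A^2$, the right-hand side lies in $L^1(dt\otimes P)$; this is exactly the integrability computation in the proof of Lemma~\ref{lem:Jstrong}. Measurability of $\Delta^\lambda$ follows from the Borel measurability of $A^\lambda$ and $A^0$ noted after Proposition~\ref{prop:Yosida}. Dominated convergence, applied along an arbitrary sequence $\lambda_n\downarrow0$ and hence for the whole family, yields the first claim.

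For the consequence, I compare the selected system with the regularized system under the same $P$. Both share the same forward process, the same $g$, $f$, control and integrators, and differ only in the exogenous source: $\chi_t=\langle\varrho,A^0\rangle$ versus $\chi^\lambda_t=\langle\varrho,A^\lambda\rangle$. Lemma~\ref{lem:Bstab}, with $\zeta=\bar\zeta$, $X=\bar X$, $\mu=\bar\mu$ and $\alpha=\bar\alpha$, bounds
\[
\E^P\sup_t|Y^\lambda_t-Y_t|^2+\E^P\!\int_0^T\bigl(|Z^\lambda_t-Z_t|^2_{a_t^P}+\|U^\lambda_t-U_t\|_\Pi^2\bigr)dt
\]
by $C|\varrho|^2\E^P\int_0^T|\Delta_t^\lambda|^2dt$. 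Cauchy--Schwarz gives $\E^P\sup_t|\mathsf K^{\lambda}_t-\mathsf K_t|^2\le T|\varrho|^2\E^P\int_0^T|\Delta^\lambda_t|^2dt$. Both bounds tend to zero by the first part, which is \eqref{eq:Yconv} for this fixed $P$.

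The only delicate point is the lack of uniformity over $P$. Dominated convergence gives no rate, and the dominating function depends on $P$, so it does not survive a supremum over $\Pcal$. This is why the statement is modelwise; uniformity requires the occupation or uniform-integrability arguments developed later in Section~4.
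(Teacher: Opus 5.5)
Your proposal is correct and follows essentially the same route as the paper: pointwise convergence from \eqref{eq:Aconv}, domination by $4|A^0|^2$ made integrable via the growth bound in Assumption~\ref{HD}, \eqref{eq:Fsecond} and $\alpha\in\mathcal A^2$, dominated convergence, and then Lemma~\ref{lem:Bstab} with only the source differing, plus Cauchy--Schwarz for $\mathsf K$. Your remarks on measurability and on the lack of uniformity over $\Pcal$ are consistent with the paper's treatment.
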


\begin{proof}
Fix $P$. For $dt\otimes P$-a.e. $(t,\omega)$, the pointwise convergence in \eqref{eq:Aconv} gives
\[
A^\lambda(X_{t-}^P,\mu_{t-}^P,\alpha_t)
\longrightarrow A^0(X_{t-}^P,\mu_{t-}^P,\alpha_t).
\]
Moreover,
\[
|A^\lambda-A^0|^2
\le2|A^\lambda|^2+2|A^0|^2
\le4|A^0|^2.
\]
The dominating function is integrable because Assumption~\ref{HD} gives
\[
\E^P\int_0^T|A^0(X_{t-}^P,\mu_{t-}^P,\alpha_t)|^2dt
\le C\left(T+T\E^P\sup_{t\le T}|X_t^P|^2+
\E^P\int_0^T|\alpha_t|^2dt\right)<\infty.
\]
The dominated convergence theorem therefore yields
\[
\E^P\int_0^T|A^\lambda(X_{t-}^P,\mu_{t-}^P,\alpha_t)-A^0(X_{t-}^P,\mu_{t-}^P,\alpha_t)|^2dt\to0.
\]
The regularized and selected systems have the same forward process, terminal condition, and coefficient $f$; only the exogenous source differs. Lemma~\ref{lem:Bstab} then gives
\begin{align*}
&\E^P\sup_{t\le T}|Y_t^{\lambda,P}-Y_t^P|^2
+\E^P\int_0^T\bigl(|Z_t^{\lambda,P}-Z_t^P|_{a_t^P}^2
+\|U_t^{\lambda,P}-U_t^P\|_\Pi^2\bigr)dt\\
&\qquad\le C|\varrho|^2\E^P\int_0^T|A^\lambda(X_{t-}^P,\mu_{t-}^P,\alpha_t)-A^0(X_{t-}^P,\mu_{t-}^P,\alpha_t)|^2dt\to0.
\end{align*}
Finally,
\[
\sup_{t\le T}|\mathsf K_t^{\lambda,P}-\mathsf K_t^P|^2
\le T|\varrho|^2\int_0^T|A^\lambda-A^0|^2dt,
\]
which also gives convergence of $\mathsf K$ and proves the version of \eqref{eq:Yconv} without the supremum over $P$.
\end{proof}

\begin{hypothesis}[Uniform convergence of the source]\label{HY}
There exists $\delta>0$ such that
\begin{equation}
\sup_{P\in\Pcal}\E^P\int_0^T
|A^0(X_{t-}^P,\mu_{t-}^P,\alpha_t)|^{2+\delta}dt<\infty,
\label{eq:AUI}
\end{equation}
and, for every $\varepsilon>0$,
\begin{equation}
\lim_{\lambda\downarrow0}\sup_{P\in\Pcal}
(dt\otimes P)\left(
|A^\lambda(X_{t-}^P,\mu_{t-}^P,\alpha_t)
-A^0(X_{t-}^P,\mu_{t-}^P,\alpha_t)|>\varepsilon\right)=0.
\label{eq:Ameasure}
\end{equation}
\end{hypothesis}

\begin{lemma}[Uniform Vitali argument]\label{lem:Vitali}
Let $\alpha\in\mathcal A^2$. Under Assumptions~\ref{HF}, \ref{HD}, and~\ref{HY},
\begin{equation}
\lim_{\lambda\downarrow0}\sup_{P\in\Pcal}
\E^P\int_0^T|A^\lambda(X_{t-}^P,\mu_{t-}^P,\alpha_t)
-A^0(X_{t-}^P,\mu_{t-}^P,\alpha_t)|^2dt=0.
\label{eq:Astrong}
\end{equation}
\end{lemma}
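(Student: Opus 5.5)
We split the integrand according to whether the pointwise error is small or not, and control the large-error set with the uniform $(2+\delta)$-integrability in \eqref{eq:AUI}. Write
\[
\Delta_t^{\lambda,P}:=A^\lambda(X_{t-}^P,\mu_{t-}^P,\alpha_t)-A^0(X_{t-}^P,\mu_{t-}^P,\alpha_t).
\]
For $\varepsilon>0$, let $G^{\lambda,P}_\varepsilon:=\{(t,\omega):|\Delta_t^{\lambda,P}|>\varepsilon\}$. Then
\[
\E^P\int_0^T|\Delta_t^{\lambda,P}|^2dt
\le \varepsilon^2T+\E^P\int_0^T|\Delta_t^{\lambda,P}|^2\one_{G^{\lambda,P}_\varepsilon}dt .
\]
The first term is uniform in $P$ and $\lambda$. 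The remaining work is the second term.

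By the domination $|A^\lambda|\le|A^0|$ in \eqref{eq:Aconv}, we have $|\Delta_t^{\lambda,P}|\le2|A^0(X_{t-}^P,\mu_{t-}^P,\alpha_t)|$ pointwise. We then apply H\"older's inequality on $[0,T]\times\Omega$ with exponents $\frac{2+\delta}{2}$ and $\frac{2+\delta}{\delta}$:
\[
\E^P\int_0^T|\Delta_t^{\lambda,P}|^2\one_{G^{\lambda,P}_\varepsilon}dt
\le 4\Bigl(\E^P\int_0^T|A^0(X_{t-}^P,\mu_{t-}^P,\alpha_t)|^{2+\delta}dt\Bigr)^{\frac{2}{2+\delta}}
\bigl((dt\otimes P)(G^{\lambda,P}_\varepsilon)\bigr)^{\frac{\delta}{2+\delta}}.
\]
The first factor is bounded by a constant $M$, independent of $P$, because of \eqref{eq:AUI}. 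Taking the supremum over $P$ gives
\[
\sup_P\E^P\int_0^T|\Delta^{\lambda,P}|^2dt\le\varepsilon^2T+4M^{2/(2+\delta)}\Bigl(\sup_P(dt\otimes P)(G^{\lambda,P}_\varepsilon)\Bigr)^{\delta/(2+\delta)}.
\]
Letting $\lambda\downarrow0$ and using \eqref{eq:Ameasure} makes the second term vanish, so the $\limsup$ is at most $\varepsilon^2T$. Since $\varepsilon$ is arbitrary, this yields \eqref{eq:Astrong}.

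Measurability of the integrands is not an issue: $A^\lambda$ and $A^0$ are Borel, as noted after Proposition~\ref{prop:Yosida}, and $X^P$ exists by Theorem~\ref{thm:Fwell}. The only delicate point is keeping every constant uniform in $P$. This is exactly why we use H\"older's inequality with the uniform bound \eqref{eq:AUI}, rather than dominated convergence modelwise as in Lemma~\ref{lem:fixedY}: the uniform moment bound is what turns convergence in measure, uniform in $P$, into uniform $L^2$ convergence. We do not expect a genuine obstacle beyond this bookkeeping.
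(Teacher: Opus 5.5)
Your proof is correct and follows essentially the same route as the paper. Both split at $\{|\Delta|>\varepsilon\}$, use the domination $|A^\lambda|\le|A^0|$ to get $|\Delta|\le 2|A^0|$, apply H\"older with exponents $\frac{2+\delta}{2}$ and $\frac{2+\delta}{\delta}$ together with the uniform moment bound \eqref{eq:AUI}, and conclude with the uniform convergence in measure \eqref{eq:Ameasure} before letting $\varepsilon\downarrow0$; your constant $4$ and the uniformity in $P$ match the paper's bookkeeping.
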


\begin{proof}
Set
\[
D_{\lambda,t}^P=
A^\lambda(X_{t-}^P,\mu_{t-}^P,\alpha_t)
-A^0(X_{t-}^P,\mu_{t-}^P,\alpha_t).
\]
The domination in \eqref{eq:Aconv} yields
\[
|D_{\lambda,t}^P|
\le |A^\lambda(X_{t-}^P,\mu_{t-}^P,\alpha_t)|
+|A^0(X_{t-}^P,\mu_{t-}^P,\alpha_t)|
\le2|A^0(X_{t-}^P,\mu_{t-}^P,\alpha_t)|.
\]
Hence, if $\delta>0$ is as in \eqref{eq:AUI},
\begin{align}
\sup_{P\in\Pcal}\sup_{0<\lambda\le1}
\E^P\int_0^T|D_{\lambda,t}^P|^{2+\delta}dt
&\le 2^{2+\delta}
\sup_{P\in\Pcal}
\E^P\int_0^T|A^0(X_{t-}^P,\mu_{t-}^P,\alpha_t)|^{2+\delta}dt\\
&<\infty.
\label{eq:Vitali-uniform-moment}
\end{align}

Fix $\varepsilon>0$ and split the integral over
$\{|D_\lambda^P|\le\varepsilon\}$ and $\{|D_\lambda^P|>\varepsilon\}$. On the first set,
\[
\E^P\int_0^T|D_{\lambda,t}^P|^2
\one_{\{|D_\lambda^P|\le\varepsilon\}}dt
\le T\varepsilon^2.
\]
On the second set, H\"older's inequality with exponents $(2+\delta)/2$ and $(2+\delta)/\delta$ gives
\begin{align*}
&\E^P\int_0^T|D_{\lambda,t}^P|^2
\one_{\{|D_\lambda^P|>\varepsilon\}}dt\\
&\qquad\le
\left(\E^P\int_0^T|D_{\lambda,t}^P|^{2+\delta}dt\right)^{2/(2+\delta)}
\left((dt\otimes P)(|D_\lambda^P|>\varepsilon)\right)^{\delta/(2+\delta)}.
\end{align*}
Using \eqref{eq:Vitali-uniform-moment}, there is a constant $C$, independent of $P$ and $\lambda$, such that
\begin{equation}
\E^P\int_0^T|D_{\lambda,t}^P|^2dt
\le T\varepsilon^2+C
\left((dt\otimes P)(|D_\lambda^P|>\varepsilon)\right)^{\delta/(2+\delta)}.
\label{eq:Vitali-split}
\end{equation}
Set
\[
q_\lambda(\varepsilon):=
\sup_{P\in\Pcal}(dt\otimes P)(|D_\lambda^P|>\varepsilon).
\]
Assumption~\ref{HY}, through \eqref{eq:Ameasure}, gives $q_\lambda(\varepsilon)\to0$ as $\lambda\downarrow0$. Taking the supremum over $P$ in \eqref{eq:Vitali-split} and then the $\limsup$ as $\lambda\downarrow0$ yields
\[
0\le\limsup_{\lambda\downarrow0}\sup_{P\in\Pcal}
\E^P\int_0^T|D_{\lambda,t}^P|^2dt
\le T\varepsilon^2.
\]
Since this holds for every $\varepsilon>0$, letting $\varepsilon\downarrow0$ gives \eqref{eq:Astrong}.
\end{proof}

\begin{theorem}[Strong Yosida convergence]\label{thm:Yconv}
Let $\alpha\in\mathcal A^2$. Under Assumptions~\ref{HF}, \ref{HB}, \ref{HD}, and~\ref{HY},
\begin{align}
\lim_{\lambda\downarrow0}\sup_{P\in\Pcal}\E^P\Bigl[
&\sup_{t\le T}|Y_t^{\lambda,P}-Y_t^P|^2
+\sup_{t\le T}|\mathsf K_t^{\lambda,P}-\mathsf K_t^P|^2\nonumber\\
&+\int_0^T|Z_t^{\lambda,P}-Z_t^P|_{a_t^P}^2dt
+\int_0^T\|U_t^{\lambda,P}-U_t^P\|_\Pi^2dt
\Bigr]=0.
\label{eq:Yconv}
\end{align}
\end{theorem}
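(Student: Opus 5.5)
The plan is to combine the uniform source convergence of Lemma~\ref{lem:Vitali} with the backward stability estimate of Lemma~\ref{lem:Bstab}, applied modelwise with constants independent of $P$. Fix $P\in\Pcal$ and $\lambda>0$. The regularized system and the selected system share the following data:
\begin{itemize}
\item the forward process $X^P$ and its law flow $\mu^P$;
\item the control $\alpha$;
\item the terminal value $\zeta=g(X_T^P,\mu_T^P)$;
\item the generator $f$ and the integrators $(B,\widetilde N^P)$.
\end{itemize}
This holds because of the one-way coupling, since the forward equation does not see the source. The two BSDEs therefore differ only through the exogenous terms
\[
\chi_t^{\lambda,P}=\langle\varrho,A^\lambda(X_{t-}^P,\mu_{t-}^P,\alpha_t)\rangle,
\qquad
\chi_t^{P}=\langle\varrho,A^0(X_{t-}^P,\mu_{t-}^P,\alpha_t)\rangle.
\]

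I will apply Lemma~\ref{lem:Bstab} with $(X,\mu,\alpha)=(\bar X,\bar\mu,\bar\alpha)$ and $\zeta=\bar\zeta$. In the right-hand side of \eqref{eq:Bstab}, every term vanishes except the one involving $|\chi-\bar\chi|^2$. This gives
\[
\E^P\Bigl[\sup_{t\le T}|Y_t^{\lambda,P}-Y_t^P|^2+\int_0^T\bigl(|Z_t^{\lambda,P}-Z_t^P|_{a_t^P}^2+\|U_t^{\lambda,P}-U_t^P\|_\Pi^2\bigr)dt\Bigr]
\le C|\varrho|^2\,\E^P\int_0^T|D_{\lambda,t}^P|^2dt ,
\]
where $D_{\lambda,t}^P=A^\lambda(X_{t-}^P,\mu_{t-}^P,\alpha_t)-A^0(X_{t-}^P,\mu_{t-}^P,\alpha_t)$ is as in Lemma~\ref{lem:Vitali}, and $C$ is independent of $P$ and $\lambda$. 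The law terms $\Lcal^P(Y^{\lambda,P}_s)$ versus $\Lcal^P(Y^P_s)$ are already absorbed inside that lemma through the natural coupling. For the cumulative source, Cauchy--Schwarz in time gives
\[
\sup_{t\le T}|\mathsf K_t^{\lambda,P}-\mathsf K_t^P|^2\le T|\varrho|^2\int_0^T|D_{\lambda,t}^P|^2dt .
\]

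Adding the two bounds, taking the supremum over $P\in\Pcal$ and then the limit $\lambda\downarrow0$, Lemma~\ref{lem:Vitali} (that is, \eqref{eq:Astrong}) shows the right-hand side tends to zero. This yields \eqref{eq:Yconv}.

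The only delicate point is making sure that the constant in Lemma~\ref{lem:Bstab} is genuinely uniform in both $P$ and $\lambda$. This holds for three reasons:
\begin{itemize}
\item The exponential weight $\gamma$ in that proof depends only on $L_B$, $\lambda_{\min}(\underline a)$ and the common compensator $\Pi$, and not on the source.
\item The source enters linearly as an exogenous residual, so no $\lambda^{-1}$ Lipschitz factor appears. This is in contrast with Theorem~\ref{thm:datastability}, where the forward states differ.
\item Both solutions lie in the square-integrable classes needed to justify the localized It\^o formula, since $|\chi^{\lambda,P}|\le|\varrho||A^0|$ and Theorem~\ref{thm:Bwell} applies uniformly in $\lambda$, as noted before Lemma~\ref{lem:Jstrong}.
\end{itemize}
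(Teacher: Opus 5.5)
Your proposal is correct and follows essentially the same route as the paper: apply Lemma~\ref{lem:Bstab} with identical forward process, law flow, control and terminal condition, so that only the exogenous source difference $\langle\varrho,D_{\lambda}^P\rangle$ survives. You then bound $\mathsf K^{\lambda,P}-\mathsf K^P$ by Cauchy--Schwarz in time and conclude via Lemma~\ref{lem:Vitali}, with the constant uniform in $P$ and $\lambda$ exactly as the paper argues.
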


\begin{proof}
Set
\[
D_{\lambda,t}^P=
A^\lambda(X_{t-}^P,\mu_{t-}^P,\alpha_t)
-A^0(X_{t-}^P,\mu_{t-}^P,\alpha_t).
\]
The regularized and selected systems have exactly the same forward process $X^P$, law flow $\mu^P$, control, and terminal condition. Thus, in Lemma~\ref{lem:Bstab}, every perturbation term vanishes except the exogenous source
\[
\chi_t^{\lambda,P}-\chi_t^P=\langle\varrho,D_{\lambda,t}^P\rangle.
\]
Hence
\begin{align*}
&\E^P\sup_{t\le T}|Y_t^{\lambda,P}-Y_t^P|^2
+\E^P\int_0^T\bigl(|Z_t^{\lambda,P}-Z_t^P|_{a_t^P}^2
+\|U_t^{\lambda,P}-U_t^P\|_\Pi^2\bigr)dt\\
&\qquad\le C|\varrho|^2\E^P\int_0^T|D_{\lambda,t}^P|^2dt.
\end{align*}
For the cumulative source,
\begin{align*}
\sup_{t\le T}|\mathsf K_t^{\lambda,P}-\mathsf K_t^P|^2
&=\sup_{t\le T}\left|\int_0^t\langle\varrho,D_{\lambda,s}^P\rangle ds\right|^2\\
&\le T|\varrho|^2\int_0^T|D_{\lambda,s}^P|^2ds.
\end{align*}
Therefore,
\begin{align*}
&\E^P\Bigl[\sup_{t\le T}|Y_t^{\lambda,P}-Y_t^P|^2
+\sup_{t\le T}|\mathsf K_t^{\lambda,P}-\mathsf K_t^P|^2\\
&\hspace{22mm}+\int_0^T|Z_t^{\lambda,P}-Z_t^P|_{a_t^P}^2dt
+\int_0^T\|U_t^{\lambda,P}-U_t^P\|_\Pi^2dt\Bigr]\\
&\qquad\le C(1+T)|\varrho|^2\E^P\int_0^T|D_{\lambda,t}^P|^2dt.
\end{align*}
By Lemma~\ref{lem:Vitali}, the supremum over $P$ of the right-hand side tends to zero as $\lambda\downarrow0$, proving \eqref{eq:Yconv}.
\end{proof}

The next proposition treats the case in which the states vary and the selections converge only weakly.

\begin{proposition}[Strong--weak closedness with variable law and interface]\label{prop:closure}
Fix $P\in\Pcal$. Let $\alpha^n,\alpha\in\mathcal A^2$ satisfy
\[
\alpha^n\longrightarrow\alpha
\quad\text{strongly in }L^2(dt\otimes P;\R^k),
\]
and let $(X^n)_n\subset L^2(dt\otimes P;\R^d)$ and $(\Gamma^n)_n\subset L^2(dt\otimes P;\R^d)$ satisfy
\begin{align*}
X^n&\longrightarrow X
\quad\text{strongly in }L^2(dt\otimes P;\R^d),\\
\Gamma^n&\rightharpoonup\Gamma
\quad\text{weakly in }L^2(dt\otimes P;\R^d),\\
\Gamma_t^n&\in A(X_t^n,\Lcal^P(X_t^n),\alpha_t^n)
\quad dt\otimes P\text{-a.e.}
\end{align*}
Then
\[
\Gamma_t\in A(X_t,\Lcal^P(X_t),\alpha_t)
\quad dt\otimes P\text{-a.e.}
\]
\end{proposition}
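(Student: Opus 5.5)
The plan is to reduce the statement to a Minty-type characterization of the graph of $A(\cdot,\mu,a)$ that is uniform over the varying parameters, using the translation structure $A(x,\mu,a)=\partial\phi(x+h(a))+c(\mu)$. Set $S^n_t:=X^n_t+h(\alpha^n_t)$, $S_t:=X_t+h(\alpha_t)$, and $\Theta^n_t:=\Gamma^n_t-c(\Lcal^P(X^n_t))$, $\Theta_t:=\Gamma_t-c(\Lcal^P(X_t))$. Then the hypothesis reads $\Theta^n_t\in\partial\phi(S^n_t)$, and the conclusion is equivalent to $\Theta_t\in\partial\phi(S_t)$ $dt\otimes P$-a.e. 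In this formulation the moving interface and the law enter only through the affine shifts, and $\partial\phi$ is a fixed maximal monotone operator.

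First I will establish the strong and weak convergences of the shifted objects. By \eqref{eq:hlip}, $|h(\alpha^n_t)-h(\alpha_t)|\le L_h|\alpha^n_t-\alpha_t|$, so $S^n\to S$ strongly in $L^2(dt\otimes P)$; linear growth of $h$ and $\alpha\in\mathcal A^2$ guarantee that these processes lie in $L^2$. For the law term, Lemma~\ref{lem:lawmeas} gives Borel flows with $\int_0^TW_2^2(\Lcal^P(X^n_t),\Lcal^P(X_t))dt\le\E^P\int_0^T|X^n_t-X_t|^2dt\to0$. Combined with \eqref{eq:clip}, this yields $c(\Lcal^P(X^n_\cdot))\to c(\Lcal^P(X_\cdot))$ strongly in $L^2(dt\otimes P)$; this is a deterministic function of $t$, bounded by $C_c$. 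Hence $\Theta^n\rightharpoonup\Theta$ weakly in $L^2(dt\otimes P)$, since a weakly convergent sequence minus a strongly convergent one converges weakly.

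Next I will lift the problem to the Hilbert space $H=L^2(dt\otimes P;\R^d)$ and define the realization $\mathcal B=\{(s,\theta)\in H\times H:\theta\in\partial\phi(s)\ \text{a.e.}\}$. Because $\partial\phi$ is maximal monotone on $\R^d$, $\mathcal B$ is maximal monotone on $H$; equivalently, $\mathcal B=\partial\Psi$ with $\Psi(s)=\E^P\int_0^T\phi(s_t)dt$, where $\phi$ is finite convex, so $\Psi$ is convex and lsc on $H$ (possibly $+\infty$; lower semicontinuity follows from Fatou after subtracting an affine minorant). Since $(S^n,\Theta^n)\in\mathcal B$, for every $(v,w)\in\mathcal B$ we have $\E^P\int_0^T\langle\Theta^n_t-w_t,S^n_t-v_t\rangle dt\ge0$. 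Passing to the limit requires only strong$\times$weak convergence of the pairing: $\langle\Theta^n,S^n\rangle_H\to\langle\Theta,S\rangle_H$ because $S^n\to S$ strongly and $\Theta^n$ is bounded. This gives $\langle\Theta-w,S-v\rangle_H\ge0$ for all $(v,w)\in\mathcal B$, and maximality of $\mathcal B$ gives $(S,\Theta)\in\mathcal B$, i.e.\ $\Theta_t\in\partial\phi(S_t)$ a.e. Unwinding the shifts then yields $\Gamma_t\in A(X_t,\Lcal^P(X_t),\alpha_t)$.

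I expect the main obstacle to be the maximality of $\mathcal B$ in $H$. The cleanest route is Minty's surjectivity criterion. Given $f\in H$, define $s_t:=(I+\partial\phi)^{-1}(f_t)$ pointwise. This map is nonexpansive, hence measurable and in $H$, and $f-s\in\partial\phi(s)$ a.e., so $R(I+\mathcal B)=H$. Measurability and predictability issues are harmless because they hold $dt\otimes P$-a.e. Alternatively, one could avoid the abstract argument by testing with pairs $v=S+\tau\one_G\zeta$ and $w_t\in\partial\phi(v_t)$ for a measurable selection, then localizing. The surjectivity route is shorter, so I would use it.
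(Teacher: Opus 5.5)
Your proof is correct and is essentially the paper's argument. It uses the same reduction to $S^n=X^n+h(\alpha^n)$ and $\Theta^n=\Gamma^n-c(\Lcal^P(X^n))$, with $S^n\to S$ strongly and $\Theta^n\rightharpoonup\Theta$ weakly, followed by the same strong--weak passage to the limit in the integrated monotonicity inequality. Your appeal to maximality of the $L^2$-realization $\mathcal B$ via $R(I+\mathcal B)=L^2$ is what the paper does by hand: it tests with $V=(I+\partial\phi)^{-1}(S+\Theta)$ and $W=S+\Theta-V$, which gives $-\E^P\int_0^T|S_t-V_t|^2dt\ge0$, hence $S=V$ and $\Theta=W\in\partial\phi(S)$.
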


\begin{proof}
Set $\mu_t^n=\Lcal^P(X_t^n)$ and $\mu_t=\Lcal^P(X_t)$. The canonical coupling gives, for a.e. $t$,
\[
W_2^2(\mu_t^n,\mu_t)\le\E^P|X_t^n-X_t|^2.
\]
After integration,
\begin{equation}
\int_0^TW_2^2(\mu_t^n,\mu_t)dt
\le\E^P\int_0^T|X_t^n-X_t|^2dt\longrightarrow0.
\label{eq:Minty-law-detail}
\end{equation}
Since $c$ is $W_2$-Lipschitz,
\[
\int_0^T|c(\mu_t^n)-c(\mu_t)|^2dt
\le L_c^2\int_0^TW_2^2(\mu_t^n,\mu_t)dt\to0.
\]
Thus $c(\mu^n)\to c(\mu)$ strongly in $L^2(dt\otimes P;\R^d)$; these functions are deterministic in $\omega$, which does not change the norm.

Define
\[
S_t^n=X_t^n+h(\alpha_t^n),\qquad S_t=X_t+h(\alpha_t),
\]
and
\[
H_t^n=\Gamma_t^n-c(\mu_t^n),\qquad H_t=\Gamma_t-c(\mu_t).
\]
The Lipschitz continuity of $h$ gives
\begin{align*}
\|S^n-S\|_{L^2(dt\otimes P)}
&\le \|X^n-X\|_{L^2(dt\otimes P)}
+\|h(\alpha^n)-h(\alpha)\|_{L^2(dt\otimes P)}\\
&\le \|X^n-X\|_{L^2(dt\otimes P)}
+L_h\|\alpha^n-\alpha\|_{L^2(dt\otimes P)}\longrightarrow0.
\end{align*}
Moreover, as the sum of a weakly convergent sequence and a strongly convergent one,
\[
H^n\rightharpoonup H\quad\text{in }L^2(dt\otimes P;\R^d).
\]
The initial inclusion becomes
\[
H_t^n\in\partial\phi(S_t^n)\qquad dt\otimes P\text{-a.e.}
\]

Now take $(V,W)\in L^2\times L^2$ such that $W_t\in\partial\phi(V_t)$ a.e. Monotonicity of $\partial\phi$ gives pointwise
\[
\langle H_t^n-W_t,S_t^n-V_t\rangle\ge0.
\]
After integration,
\begin{equation}
\E^P\int_0^T\langle H_t^n-W_t,S_t^n-V_t\rangle dt\ge0.
\label{eq:Minty-n}
\end{equation}
Expand the inner product:
\begin{align*}
\int\langle H^n-W,S^n-V\rangle
={}&\int\langle H^n,S-V\rangle
+\int\langle H^n,S^n-S\rangle\\
&-\int\langle W,S^n-V\rangle.
\end{align*}
The first term converges to $\int\langle H,S-V\rangle$ by weak convergence of $H^n$. The third converges by strong convergence of $S^n$. For the second,
\[
\left|\E^P\int_0^T\langle H_t^n,S_t^n-S_t\rangle dt\right|
\le\|H^n\|_{L^2}\|S^n-S\|_{L^2}\to0,
\]
because every weakly convergent sequence is bounded. Passing to the limit in \eqref{eq:Minty-n} gives
\begin{equation}
\E^P\int_0^T\langle H_t-W_t,S_t-V_t\rangle dt\ge0.
\label{eq:Minty-limit}
\end{equation}

To identify the graph, use the resolvent $J=(I+\partial\phi)^{-1}$, which is $1$-Lipschitz. Since $\phi$ is finite on $\R^d$, $J(0)\in\R^d$ is well defined and finite. For every $z\in\R^d$,
\[
|J(z)|\le |J(z)-J(0)|+|J(0)|\le |z|+|J(0)|.
\]
Since $S+H\in L^2(dt\otimes P;\R^d)$ and $T<\infty$,
\[
\E^P\int_0^T|J(S_t+H_t)|^2dt
\le 2\E^P\int_0^T|S_t+H_t|^2dt+2T|J(0)|^2<\infty.
\]
Thus
\[
V:=J(S+H)
\]
belongs to $L^2$, and
\[
W:=S+H-V\in L^2.
\]
By definition of the resolvent, $W_t\in\partial\phi(V_t)$ a.e., so this choice is admissible in \eqref{eq:Minty-limit}. Since $W=S+H-V$,
\begin{align*}
\langle H-W,S-V\rangle
&=\langle H-(S+H-V),S-V\rangle\\
&=\langle V-S,S-V\rangle=-|S-V|^2.
\end{align*}
Hence
\[
0\le-\E^P\int_0^T|S_t-V_t|^2dt\le0.
\]
Therefore $S=V$ a.e. By the definition of $W$, $H=W$, and since $W\in\partial\phi(V)$,
\[
H_t\in\partial\phi(S_t)\qquad dt\otimes P\text{-a.e.}
\]
Finally, $S=X+h(\alpha)$ and $\Gamma=H+c(\mu)$, so
\[
\Gamma_t\in\partial\phi(X_t+h(\alpha_t))+c(\mu_t)
=A(X_t,\mu_t,\alpha_t)
\]
a.e.
\end{proof}

\paragraph{Role of strong--weak closedness.}
Proposition~\ref{prop:closure} is not needed for Theorem~\ref{thm:Yconv}, where the forward process and control are fixed and strong convergence of the source is obtained directly. It instead identifies graph limits when the state, its law, and the control vary while the source converges only weakly. The conclusion is deliberately limited to $\Gamma\in A(X,\Lcal^P(X),\alpha)$: graph membership alone does not identify the minimal-norm selection $A^0$ without additional information.

\subsection{Regularization Rates and Shifted Occupation}

The Yosida rate depends on the amount of time spent near the interface. We therefore use occupation estimates; see \cite{Avikainen2009,FournierPrintems2010,TissotDaguette2026,Friesen2026}.

The singular geometry is described by the moving distance $\rho(x,a)=\operatorname{dist}(x,\Sigma(a))$ from \eqref{eq:Sigma}. For the ALA prototype, the connected components of $\R^d\setminus\Sigma(a)$ are translated open orthants. On each component the operator is single-valued, and, for fixed $(\mu,a)\in\mathcal P_2(\R^d)\times D$ and all $x,x'$ belonging to the same connected component of $\R^d\setminus\Sigma(a)$,
\begin{equation}
|A(x,\mu,a)-A(x',\mu,a)|\le \alpha_A|x-x'|.
\label{eq:mobile-face-lip}
\end{equation}

\begin{hypothesis}[Quantitative shifted occupation]\label{HoccJ}
There exist $\theta,\delta>0$ and $C_{\rm occ}<\infty$ such that
\begin{align}
\sup_{P\in\Pcal}\E^P\int_0^T
|A^0(X_{t-}^P,\mu_{t-}^P,\alpha_t)|^{2+\delta}dt&<\infty,
\label{eq:occmoment}\\
\sup_{P\in\Pcal}\E^P\int_0^T
\one_{\{\rho(X_{t-}^P,\alpha_t)\le r\}}dt
&\le C_{\rm occ}r^\theta,
\qquad 0<r\le1.
\label{eq:occ}
\end{align}
\end{hypothesis}

For the moving ALA source, the moment condition is automatic under Assumption~\ref{HqJ} for every $\delta\in(0,q-2]$. Indeed, \eqref{eq:A0growth}, H\"older's inequality, and \eqref{eq:Fq} yield
\begin{align*}
&\sup_P\E^P\int_0^T
|A^0(X_{t-}^P,\mu_{t-}^P,\alpha_t)|^{2+\delta}dt\\
&\qquad\le C\left(1+\sup_P\E^P\sup_{t\le T}|X_t^P|^{2+\delta}
+\sup_P\E^P\int_0^T|\alpha_t|^{2+\delta}dt\right)<\infty.
\end{align*}

\begin{proposition}[Uniform qualitative non-contact for the moving interface]\label{prop:noncontactJ}
Let $\delta>0$. Assume the quadratic well-posedness conditions and
\[
\sup_{P\in\Pcal}\E^P\int_0^T
|A^0(X_{t-}^P,\mu_{t-}^P,\alpha_t)|^{2+\delta}dt<\infty.
\]
If
\[
\omega(r):=\sup_{P\in\Pcal}\E^P\int_0^T
\one_{\{\rho(X_{t-}^P,\alpha_t)\le r\}}dt
\longrightarrow0\qquad(r\downarrow0),
\]
then Assumption~\ref{HY} holds and the convergence in Theorem~\ref{thm:Yconv} is uniform over $\Pcal$.
\end{proposition}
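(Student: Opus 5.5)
The plan is to verify the convergence-in-measure condition \eqref{eq:Ameasure}. The moment condition \eqref{eq:AUI} is already among the hypotheses with the same $\delta$, so Assumption~\ref{HY} will follow. Lemma~\ref{lem:Vitali} and Theorem~\ref{thm:Yconv} then give the uniform convergence over $\Pcal$. Since $\rho$ is the distance to $\Sigma(a)$ from \eqref{eq:Sigma}, I work in the ALA prototype, where the components of $\R^d\setminus\Sigma(a)$ are translated open orthants and \eqref{eq:mobile-face-lip} holds.

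\emph{Step 1: a deterministic pointwise estimate.} Fix $(x,\mu,a)$ and $r>0$ with $\rho(x,a)>r$ and $\lambda|A^0(x,\mu,a)|<r$. By \eqref{eq:Yosida-displacement} and the domination in \eqref{eq:Aconv},
\[
|J_\lambda(x,\mu,a)-x|=\lambda|A^\lambda(x,\mu,a)|\le\lambda|A^0(x,\mu,a)|<r.
\]
The segment from $x$ to $J_\lambda(x,\mu,a)$ therefore stays in the open ball of radius $r$ around $x$, which does not meet $\Sigma(a)$. Hence $J_\lambda(x,\mu,a)$ lies in the same component of $\R^d\setminus\Sigma(a)$ as $x$. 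On that component $A(\cdot,\mu,a)$ is single-valued, so $A(x,\mu,a)=\{A^0(x,\mu,a)\}$. By \eqref{eq:Yosida-graph}, $A^\lambda(x,\mu,a)=A(J_\lambda(x,\mu,a),\mu,a)$. Then \eqref{eq:mobile-face-lip} gives
\[
|A^\lambda(x,\mu,a)-A^0(x,\mu,a)|\le\alpha_A|J_\lambda(x,\mu,a)-x|\le\alpha_A\lambda|A^0(x,\mu,a)|.
\]

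\emph{Step 2: splitting the bad set.} Write $D^P_{\lambda,t}$ as in Lemma~\ref{lem:Vitali}, and fix $\varepsilon>0$, $K\ge1$, $r\in(0,1]$. Take $\lambda<\min\{r/K,\ \varepsilon/(\alpha_AK)\}$. By Step 1, on the set $\{\rho(X^P_{t-},\alpha_t)>r\}\cap\{|A^0(X^P_{t-},\mu^P_{t-},\alpha_t)|\le K\}$ we have $|D^P_{\lambda,t}|\le\alpha_A\lambda K<\varepsilon$. Consequently
\[
\sup_{P}(dt\otimes P)\bigl(|D^P_\lambda|>\varepsilon\bigr)\le\omega(r)+K^{-(2+\delta)}\sup_P\E^P\int_0^T|A^0(X^P_{t-},\mu^P_{t-},\alpha_t)|^{2+\delta}dt,
\]
where the second term comes from Chebyshev's inequality and the assumed uniform moment. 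Taking $\limsup_{\lambda\downarrow0}$, then letting $K\to\infty$ and $r\downarrow0$, sends the right-hand side to $0$ because $\omega(r)\to0$. This gives \eqref{eq:Ameasure}.

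The main point needing care is Step 1: the displacement bound $|J_\lambda-x|\le\lambda|A^0(x)|$ is what keeps the resolvent point off the moving interface $\Sigma(a)$. For fixed $a$ the interface $\Sigma(a)$ does not move during this argument, so the piecewise Lipschitz bound \eqref{eq:mobile-face-lip} applies directly on a single component. The uniformity in $P$ comes only from $\omega$ and the uniform $(2+\delta)$-moment, and both are suprema over $\Pcal$ by hypothesis.
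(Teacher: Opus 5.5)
Your proof is correct, and it reaches the conclusion by a slightly different route than the paper. Your Step~1 matches the paper's pointwise estimate. The paper works on the more conservative good set $\{\rho>r,\ \lambda|A^0|\le r/2\}$. Your open-ball argument shows the margin $r/2$ is unnecessary: $\lambda|A^0|<r$ already suffices.

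The difference lies in the order of the two conclusions. The paper truncates at the $\lambda$-dependent level $|A^0|>r/(2\lambda)$. It applies H\"older's inequality on $\{\rho\le r\}$ and the elementary bound $|v|^2\one_{\{|v|>M\}}\le M^{-\delta}|v|^{2+\delta}$ on the large-value set. This gives the quantitative $L^2$ estimate \eqref{eq:noncontact-detail} directly. Uniform $L^2$ convergence then follows by letting $\lambda\downarrow0$ and afterwards $r\downarrow0$. The measure condition \eqref{eq:Ameasure} is recovered only at the end, by Markov's inequality.

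You instead truncate at a fixed level $K$ and estimate only the measure of the bad set. You prove \eqref{eq:Ameasure} first and let Lemma~\ref{lem:Vitali} supply the H\"older step that upgrades it to uniform $L^2$ convergence.

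Your version is more economical: it needs only Markov's inequality and no integrals of $|A^0|^2$ over bad sets. It also matches the logical structure of Assumption~\ref{HY} exactly. The paper's version yields an explicit bound in terms of $\lambda$, $r$ and $\omega(r)$. That bound is the template later optimized over $r$ in Proposition~\ref{prop:Yrate} to obtain the rate $\lambda^\gamma$, which your measure-only argument does not produce.
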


\begin{proof}
For notational simplicity, fix $(P,t)$ and set
\[
x=X_{t-}^P,\qquad \mu=\mu_{t-}^P,\qquad a=\alpha_t,
\qquad J_\lambda=J_\lambda(x,\mu,a).
\]
For $0<r\le1$, introduce
\[
G_{\lambda,r}:=\{\rho(x,a)>r,\ \lambda|A^0(x,\mu,a)|\le r/2\}.
\]
On this set, \eqref{eq:Yosida-displacement} and $|A^\lambda|\le|A^0|$ give
\[
|J_\lambda-x|=\lambda|A^\lambda(x,\mu,a)|
\le\lambda|A^0(x,\mu,a)|\le r/2.
\]
Since $\rho(x,a)>r$, every coordinate of $x+h(a)$ lies at distance strictly greater than $r$ from zero. A displacement of norm at most $r/2$ cannot change the sign of any coordinate. Hence $x+h(a)$ and $J_\lambda+h(a)$ belong to the same translated open orthant. On that orthant the ALA selection is affine in the state with slope $\alpha_A$, and
\[
A^\lambda(x,\mu,a)=A(J_\lambda,\mu,a).
\]
Consequently,
\[
|A^\lambda(x,\mu,a)-A^0(x,\mu,a)|
\le\alpha_A|J_\lambda-x|
\le\alpha_A\lambda|A^0(x,\mu,a)|
\quad\text{on }G_{\lambda,r}.
\]
On $G_{\lambda,r}^c$, Yosida domination gives
\[
|A^\lambda-A^0|\le|A^\lambda|+|A^0|\le2|A^0|.
\]
Thus, with $D_\lambda=A^\lambda-A^0$,
\begin{align*}
\E^P\int_0^T|D_{\lambda,t}|^2dt
\le{}&\alpha_A^2\lambda^2\E^P\int_0^T|A_t^0|^2dt
+4\E^P\int_0^T|A_t^0|^2\one_{G_{\lambda,r}^c}dt.
\end{align*}
The complement satisfies
\[
G_{\lambda,r}^c
\subset\{\rho(X_{t-}^P,\alpha_t)\le r\}
\cup\{|A_t^0|>r/(2\lambda)\}.
\]
For the first set, H\"older's inequality with exponents $(2+\delta)/2$ and $(2+\delta)/\delta$ yields
\begin{align*}
\E^P\int_0^T|A_t^0|^2\one_{\{\rho\le r\}}dt
&\le\left(\E^P\int_0^T|A_t^0|^{2+\delta}dt\right)^{2/(2+\delta)}
\left((dt\otimes P)(\rho\le r)\right)^{\delta/(2+\delta)}\\
&\le C\,\omega(r)^{\delta/(2+\delta)}.
\end{align*}
For the second set, the elementary inequality
\[
|v|^2\one_{\{|v|>M\}}\le M^{-\delta}|v|^{2+\delta}
\]
with $M=r/(2\lambda)$ gives
\[
\E^P\int_0^T|A_t^0|^2\one_{\{|A_t^0|>r/(2\lambda)\}}dt
\le C(\lambda/r)^\delta.
\]
Taking the supremum over $P$,
\begin{equation}
\sup_P\E^P\int_0^T|D_{\lambda,t}^P|^2dt
\le C\{\lambda^2+\omega(r)^{\delta/(2+\delta)}+(\lambda/r)^\delta\}.
\label{eq:noncontact-detail}
\end{equation}
Fix $r>0$ and let $\lambda\downarrow0$; the first and third terms vanish. Then let $r\downarrow0$ and use $\omega(r)\to0$ to remove the remaining term. This proves uniform convergence of the source in $L^2(dt\otimes P)$. Finally, for every $\varepsilon>0$, Markov's inequality gives
\[
\sup_P(dt\otimes P)(|D_\lambda^P|>\varepsilon)
\le\varepsilon^{-2}\sup_P\E^P\int_0^T|D_{\lambda,t}^P|^2dt\to0.
\]
Both conditions in Assumption~\ref{HY} are therefore satisfied.
\end{proof}

\begin{hypothesis}[Absolutely continuous motion of the interface]\label{HmoveAC}
The control has an absolutely continuous representative
\[
\alpha_t=\alpha_0+\int_0^t\dot\alpha_sds,\qquad t\in[0,T],
\]
with
\[
\sup_{P\in\Pcal}\E^P\int_0^T|\dot\alpha_s|^2ds<\infty.
\]
The shift $h$ is Lipschitz as in \eqref{eq:hlip}. Consequently, $t\mapsto h(\alpha_t)$ is absolutely continuous and has a derivative $\dot H_t$ for a.e. $t\in[0,T]$, satisfying $|\dot H_t|\le L_h|\dot\alpha_t|$.
\end{hypothesis}

\begin{hypothesis}[Continuous nondegeneracy at the moving ALA interfaces]
\label{HellJ}
Set $\sigma_t^P=\sigma(t,X_{t-}^P,\mu_{t-}^P,\alpha_t)$ and $q_t^P=\sigma_t^Pa_t^P(\sigma_t^P)^\top$.
There exists $c_*>0$, independent of $P$, such that $(q_t^P)_{kk}\ge c_*$ for $k=1,\ldots,d$, $dt\otimes P$-a.e.
\end{hypothesis}

Nondegeneracy of the continuous martingale part yields the following occupation estimate.

\begin{theorem}[Occupation of a control-dependent moving interface --- ALA prototype]\label{thm:occupation-jumps}
Let $\alpha\in\mathcal A^2$ satisfy Assumption~\ref{HmoveAC}. Under Assumptions~\ref{HF}, \ref{HD}, and~\ref{HellJ}, there exists a constant $C<\infty$, independent of $P$ and $r$, such that
\begin{equation}
\sup_{P\in\Pcal}\E^P\int_0^T
\one_{\{\rho(X_{t-}^P,\alpha_t)\le r\}}dt
\le Cr,
\qquad 0<r\le1.
\label{eq:occ-jumps-proved}
\end{equation}
If Assumption~\ref{HqJ} also holds, then Assumption~\ref{HoccJ} is satisfied with $\theta=1$ and every $\delta\in(0,q-2]$.
\end{theorem}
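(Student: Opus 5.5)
The plan is to argue one coordinate at a time and combine the coordinates with a union bound. Since $\rho(x,a)=\min_k|x_k+h_k(a)|$, we have
\[
\one_{\{\rho(X_{t-}^P,\alpha_t)\le r\}}\le\sum_{k=1}^d\one_{\{|S_{t-}^{k}|\le r\}},\qquad S_t^{k}:=X_t^{P,k}+h_k(\alpha_t).
\]
So it suffices to prove $\E^P\int_0^T\one_{\{|S^k_{t-}|\le r\}}dt\le Cr$ for each $k$, with $C$ uniform in $P$. By Assumption~\ref{HmoveAC}, $H_t=h(\alpha_t)$ is adapted, continuous and absolutely continuous with $|\dot H_t|\le L_h|\dot\alpha_t|$. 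Hence $S^k$ is a semimartingale. Its drift is $b^k_t+\dot H^k_t$, its continuous martingale part is $(\sigma_t^P dB_t)_k$ with $d\langle S^{k,c}\rangle_t=(q_t^P)_{kk}dt$, and its jumps are exactly those of $X^{P,k}$, because $H$ is continuous.

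The test function is the convex $C^1$ function
\[
\psi_r(s):=\int_0^s\!\!\int_0^u\one_{\{|v|\le r\}}dv\,du .
\]
It satisfies $\psi_r'(s)=\operatorname{proj}_{[-r,r]}(s)$, hence $|\psi_r'|\le r$, $|\psi_r(s)|\le r|s|$, and $\psi_r''=\one_{[-r,r]}$ a.e. This function is not $C^2$, so I would apply the Itô formula for semimartingales with jumps to a mollified version $\psi_r*\varrho_\epsilon$ and then let $\epsilon\downarrow0$. The passage to the limit in the second-order term is the only delicate point. Since $\psi_r''$ is an indicator, I would first prove the bound with $\psi_r''$ replaced by a continuous minorant of $\one_{[-r/2,r/2]}$, pass to the limit there, and then rescale $r$. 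The Itô formula gives
\begin{align*}
\tfrac12\int_0^T\psi_r''(S^k_{t-})(q_t^P)_{kk}dt
={}&\psi_r(S_T^k)-\psi_r(S_0^k)-\int_0^T\psi_r'(S^k_{t-})(b^k_t+\dot H^k_t)dt-M_T\\
&-\sum_{t\le T}\bigl[\psi_r(S^k_t)-\psi_r(S^k_{t-})-\psi_r'(S^k_{t-})\Delta S^k_t\bigr].
\end{align*}
Here $M$ collects the $dB$ and $\widetilde N^P$ integrals of $\psi_r'(S^k_{t-})$ against $\sigma$ and $\beta$.

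The key observation is that, by convexity of $\psi_r$, the jump sum is nonnegative and can therefore simply be dropped. This is why no nondegeneracy of the jumps is needed. I expect the main obstacle to be precisely this step: checking that the jump correction is handled with the right sign and that nothing uncontrolled (such as small jumps with infinite activity) enters. Convexity, together with the jump terms being summable in $L^1$ thanks to $\int|\beta|^2d\Pi<\infty$, settles it.

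The integrand $\psi_r'$ is bounded by $r$, and $\sigma$ and $\beta$ lie in $L^2$ by the Lipschitz bounds~\eqref{eq:FLip}--\eqref{eq:Fzero} and~\eqref{eq:Fsecond}. So $M$ is a true square-integrable martingale with $\E^PM_T=0$, by Lemma~\ref{lem:mart}. Taking expectations and using Assumption~\ref{HellJ}, I would obtain
\[
\tfrac{c_*}2\E^P\int_0^T\one_{\{|S^k_{t-}|\le r\}}dt\le r\Bigl(\E^P|S_T^k|+\E^P|S_0^k|+\E^P\int_0^T(|b_t|+L_h|\dot\alpha_t|)dt\Bigr).
\]
The bracket is bounded uniformly in $P$ by \eqref{eq:Fsecond}, the linear growth of $b$ and $h$, $\alpha\in\mathcal A^2$, and the bound on $\dot\alpha$ in Assumption~\ref{HmoveAC}. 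Summing over $k$ gives \eqref{eq:occ-jumps-proved}.

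For the final claim, \eqref{eq:occ} then holds with $\theta=1$. The moment condition \eqref{eq:occmoment} for $\delta\in(0,q-2]$ follows from \eqref{eq:A0growth}, Hölder's inequality and \eqref{eq:Fq}, exactly as in the computation preceding Proposition~\ref{prop:noncontactJ}.
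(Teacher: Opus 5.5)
Your proposal is correct and follows essentially the same route as the paper: a coordinatewise It\^o formula for a convex test function whose second derivative is of order $r^{-1}$ on the layer (after normalization), the nonnegative jump remainder dropped by convexity, square-integrable martingale terms with zero expectation, and a union bound over $k$. The only real difference is the test function. The paper takes the $C^2$ function $\psi_r(x)=\sqrt{x^2+r^2}$, with $|\psi_r'|\le1$ and $\psi_r''\ge(2\sqrt2\,r)^{-1}\one_{\{|x|\le r\}}$, which makes your mollification step unnecessary; it also derives $\sup_P\E^P\sup_{t\le T}|\alpha_t|^2<\infty$ explicitly from the absolutely continuous representation, the one-line fact you implicitly need to bound $\E^P|S_0^k|+\E^P|S_T^k|$.
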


\begin{remark}
Assumption~\ref{HellJ} requires a nondegenerate continuous martingale component in every direction normal to the interface. The preceding theorem therefore does not cover the purely discontinuous regime. In that case, an occupation estimate must be obtained by another mechanism, for example from transition densities or assumptions tailored to the jump law.
\end{remark}

\begin{proof}
Fix $P\in\Pcal$ and a coordinate $k\in\{1,\ldots,d\}$. Set
\[
S_t^k=X_t^{P,k}+h_k(\alpha_t).
\]
By Assumption~\ref{HmoveAC}, the process $H_t^k:=h_k(\alpha_t)$ is absolutely continuous and has a derivative $\dot H_t^k$ such that
\[
|\dot H_t^k|\le L_h|\dot\alpha_t|
\quad dt\otimes P\text{-a.e.}
\]
The identity $\alpha_t=\alpha_0+\int_0^t\dot\alpha_sds$ implies
\[
\sup_{t\le T}|\alpha_t|^2
\le2|\alpha_0|^2+2T\int_0^T|\dot\alpha_s|^2ds.
\]
Moreover,
\[
T|\alpha_0|^2
\le2\int_0^T|\alpha_t|^2dt+2T^2\int_0^T|\dot\alpha_s|^2ds,
\]
obtained by writing $\alpha_0=\alpha_t-\int_0^t\dot\alpha_sds$ and integrating in $t$. The assumptions on $\alpha$ and $\dot\alpha$ therefore imply
\begin{equation}
\sup_P\E^P\sup_{t\le T}|\alpha_t|^2<\infty.
\label{eq:alpha-sup-occ}
\end{equation}

The semimartingale $S^k$ satisfies
\begin{align*}
S_t^k={}&S_0^k+\int_0^t\widetilde b_s^kds
+\int_0^t\sigma_{k\cdot}(s,X_{s-}^P,\mu_{s-}^P,\alpha_s)dB_s\\
&+\int_0^t\int_E\beta_k(s,X_{s-}^P,\mu_{s-}^P,\alpha_s,e)\widetilde N^P(ds,de),
\qquad t\in[0,T],\quad P\text{-a.s.},
\end{align*}
where
\[
\widetilde b_s^k=b_k(s,X_{s-}^P,\mu_{s-}^P,\alpha_s)+\dot H_s^k.
\]
The linear growth of $b$, \eqref{eq:Fsecond}, \eqref{eq:alpha-sup-occ}, and Cauchy--Schwarz give
\begin{equation}
\sup_{P,k}\E^P\int_0^T|\widetilde b_s^k|ds<\infty.
\label{eq:occ-drift-L1}
\end{equation}

Fix $0<r\le1$ and consider the convex $C^2$ function
\[
\psi_r(x)=\sqrt{x^2+r^2}.
\]
We have
\[
\psi_r'(x)=\frac{x}{\sqrt{x^2+r^2}},
\qquad |\psi_r'(x)|\le1,
\]
and
\[
\psi_r''(x)=\frac{r^2}{(x^2+r^2)^{3/2}}
\ge\frac{1}{2\sqrt2\,r}\one_{\{|x|\le r\}}.
\]
Applying It\^o's formula with jumps to $\psi_r(S_t^k)$ and writing $q_t^P=\sigma_t^Pa_t^P(\sigma_t^P)^\top$ gives
\begin{align*}
\psi_r(S_T^k)-\psi_r(S_0^k)
={}&\int_0^T\psi_r'(S_{t-}^k)\widetilde b_t^kdt
+\int_0^T\psi_r'(S_{t-}^k)\sigma_{k\cdot,t}dB_t\\
&+\frac12\int_0^T\psi_r''(S_{t-}^k)(q_t^P)_{kk}dt\\
&+\int_0^T\int_E[\psi_r(S_{t-}^k+\beta_{k,t}(e))-\psi_r(S_{t-}^k)]\widetilde N^P(dt,de)\\
&+\int_0^T\int_E R_r(S_{t-}^k,\beta_{k,t}(e))\Pi(de)dt,
\end{align*}
where
\[
R_r(x,z)=\psi_r(x+z)-\psi_r(x)-\psi_r'(x)z\ge0
\]
by convexity of $\psi_r$. For fixed $r>0$, we also have $0\le\psi_r''\le r^{-1}$; Taylor's formula with integral remainder therefore gives
\[
0\le R_r(x,z)
=z^2\int_0^1(1-\theta)\psi_r''(x+\theta z)\,d\theta
\le \frac{|z|^2}{2r}.
\]
Consequently,
\[
\E^P\int_0^T\int_E
R_r(S_{t-}^k,\beta_{k,t}(e))\Pi(de)dt
\le \frac{1}{2r}\E^P\int_0^T\|\beta_{k,t}\|_\Pi^2dt<\infty.
\]
Thus the compensator term is integrable. The two stochastic integrals are in fact square-integrable martingales, so no final localization is required. Indeed, $|\psi_r'|\le1$ and
\[
\E^P\int_0^T
|\psi_r'(S_{t-}^k)|^2
\bigl|\sigma_{k\cdot,t}\bigr|_{a_t^P}^2dt
\le
\E^P\int_0^T
\bigl|\sigma_{k\cdot,t}\bigr|_{a_t^P}^2dt<\infty.
\]
For the jump part, the $1$-Lipschitz property of $\psi_r$ gives
\[
|\psi_r(S_{t-}^k+\beta_{k,t}(e))-\psi_r(S_{t-}^k)|
\le|\beta_{k,t}(e)|,
\]
and hence
\[
\E^P\int_0^T\int_E
|\psi_r(S_{t-}^k+\beta_{k,t}(e))-\psi_r(S_{t-}^k)|^2
\Pi(de)dt
\le
\E^P\int_0^T\|\beta_{k,t}\|_\Pi^2dt<\infty.
\]
Their terminal expectations are therefore zero. Taking expectations and dropping the last, nonnegative term yields
\begin{align*}
\frac12\E^P\int_0^T\psi_r''(S_{t-}^k)(q_t^P)_{kk}dt
\le \E^P|\psi_r(S_T^k)-\psi_r(S_0^k)|
+\E^P\int_0^T|\widetilde b_t^k|dt.
\end{align*}
Since $\psi_r$ is $1$-Lipschitz,
\[
|\psi_r(S_T^k)-\psi_r(S_0^k)|\le|S_T^k-S_0^k|,
\]
and the right-hand side is uniformly bounded by \eqref{eq:Fsecond}, \eqref{eq:alpha-sup-occ}, and \eqref{eq:occ-drift-L1}. On $\{|S_{t-}^k|\le r\}$, Assumption~\ref{HellJ} and the lower bound on $\psi_r''$ give
\[
\frac12\psi_r''(S_{t-}^k)(q_t^P)_{kk}
\ge\frac{c_*}{4\sqrt2\,r}\one_{\{|S_{t-}^k|\le r\}}.
\]
Consequently,
\[
\sup_{P,k}\E^P\int_0^T\one_{\{|X_{t-}^{P,k}+h_k(\alpha_t)|\le r\}}dt
\le Cr.
\]
Finally,
\[
\{\rho(X_{t-}^P,\alpha_t)\le r\}
=\bigcup_{k=1}^d\{|X_{t-}^{P,k}+h_k(\alpha_t)|\le r\},
\]
so the union bound gives
\[
\E^P\int_0^T\one_{\{\rho(X_{t-}^P,\alpha_t)\le r\}}dt
\le\sum_{k=1}^d\E^P\int_0^T\one_{\{|X_{t-}^{P,k}+h_k(\alpha_t)|\le r\}}dt
\le Cdr.
\]
Absorbing $d$ into the constant proves \eqref{eq:occ-jumps-proved}. If Assumption~\ref{HqJ} holds and $0<\delta\le q-2$, the growth estimate \eqref{eq:A0growth}, H\"older's inequality, and \eqref{eq:Fq} provide the $2+\delta$ moment of the source; hence Assumption~\ref{HoccJ} holds with $\theta=1$.
\end{proof}

\begin{proposition}[Density criterion for deterministic moving thresholds --- ALA prototype]\label{prop:density}
Let $\alpha$ be deterministic. Assume that for every $P\in\Pcal$ and every $1\le k\le d$, the random variable $X_{t-}^{P,k}$ has a density $p_{t,k}^P$ such that
\[
\sup_{P\in\Pcal}\max_{1\le k\le d}
\int_0^T\|p_{t,k}^P\|_\infty dt\le C_{\rm dens}.
\]
Then \eqref{eq:occ} holds with $\theta=1$ and $C_{\rm occ}=2dC_{\rm dens}$.
\end{proposition}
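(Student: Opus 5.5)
The plan is a union bound over the $d$ coordinate faces followed by Fubini and the density bound for each face. Since $\alpha$ is deterministic, $h(\alpha_t)$ is a deterministic vector for each $t$, and the threshold $-h_k(\alpha_t)$ is a fixed real number at each time. By \eqref{eq:Sigma},
\[
\{\rho(X_{t-}^P,\alpha_t)\le r\}=\bigcup_{k=1}^d\{|X_{t-}^{P,k}+h_k(\alpha_t)|\le r\},
\]
so that
\[
\E^P\int_0^T\one_{\{\rho(X_{t-}^P,\alpha_t)\le r\}}dt
\le\sum_{k=1}^d\int_0^T P\bigl(|X_{t-}^{P,k}+h_k(\alpha_t)|\le r\bigr)dt .
\]
The exchange of $\E^P$ and $dt$ is Tonelli's theorem. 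Joint measurability of $(t,\omega)\mapsto X_{t-}^P(\omega)$ holds because $X^P$ is c\`adl\`ag and adapted. Measurability of $t\mapsto h(\alpha_t)$ holds because $h$ is Borel and $\alpha$ is a measurable deterministic path.

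For fixed $t$ and $k$, the event in question is $\{X_{t-}^{P,k}\in[-h_k(\alpha_t)-r,\,-h_k(\alpha_t)+r]\}$, an interval of length $2r$. The density assumption gives
\[
P\bigl(|X_{t-}^{P,k}+h_k(\alpha_t)|\le r\bigr)=\int_{-h_k(\alpha_t)-r}^{-h_k(\alpha_t)+r}p_{t,k}^P(y)\,dy\le 2r\,\|p_{t,k}^P\|_\infty .
\]
This is the only step where determinism of $\alpha$ matters. If $\alpha_t$ were random, the threshold would be correlated with $X_{t-}^{P,k}$, and the marginal density alone would not control the probability. Here the interval is nonrandom, so the bound is immediate.

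Integrating in $t$ and summing over $k$ gives
\[
\E^P\int_0^T\one_{\{\rho\le r\}}dt\le 2r\sum_{k=1}^d\int_0^T\|p^P_{t,k}\|_\infty dt\le 2d\,C_{\rm dens}\,r .
\]
Taking the supremum over $P\in\Pcal$ yields \eqref{eq:occ} with $\theta=1$ and $C_{\rm occ}=2dC_{\rm dens}$, uniformly in $0<r\le1$. In fact the bound holds for every $r>0$.

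There is no real obstacle. The only points needing care are the measurability required for Tonelli and the observation that the density need only exist for a.e. $t$; the integrability hypothesis on $\|p^P_{t,k}\|_\infty$ implicitly fixes the exceptional null set of times.
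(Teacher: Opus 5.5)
Your proposal is correct and follows the paper's proof essentially step for step. Both arguments bound each coordinate event by $2r\|p_{t,k}^P\|_\infty$ using the deterministic threshold $-h_k(\alpha_t)$, then combine the union bound over the $d$ faces with Tonelli's theorem and the density assumption to obtain $C_{\rm occ}=2dC_{\rm dens}$. Your added remarks on measurability and on the bound holding for all $r>0$ are accurate but not needed.
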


\begin{proof}
Fix $P,t,k$. Since $\alpha_t$ is deterministic, the center $-h_k(\alpha_t)$ of the layer is deterministic. Using the density of $X_{t-}^{P,k}$,
\begin{align*}
P(|X_{t-}^{P,k}+h_k(\alpha_t)|\le r)
&=\int_{-h_k(\alpha_t)-r}^{-h_k(\alpha_t)+r}p_{t,k}^P(x)dx\\
&\le2r\|p_{t,k}^P\|_\infty.
\end{align*}
The event $\{\rho(X_{t-}^P,\alpha_t)\le r\}$ is the union of the $d$ coordinate events. By the union bound, Tonelli's theorem, and the density assumption,
\begin{align*}
\E^P\int_0^T\one_{\{\rho(X_{t-}^P,\alpha_t)\le r\}}dt
&\le\sum_{k=1}^d\int_0^TP(|X_{t-}^{P,k}+h_k(\alpha_t)|\le r)dt\\
&\le2r\sum_{k=1}^d\int_0^T\|p_{t,k}^P\|_\infty dt\\
&\le2drC_{\rm dens}.
\end{align*}
Since the bound is uniform in $P$, \eqref{eq:occ} holds with $\theta=1$ and $C_{\rm occ}=2dC_{\rm dens}$.
\end{proof}

Optimizing over the layer thickness $r$ gives the following rate.

\begin{proposition}[Yosida rate under shifted occupation --- ALA prototype]\label{prop:Yrate}
Let $\alpha\in\mathcal A^2$. Under Assumptions~\ref{HF}, \ref{HB}, \ref{HD}, and~\ref{HoccJ}, set
\begin{equation}
\gamma:=\min\left\{2,\frac{\theta\delta}{2+\delta+\theta}\right\}.
\label{eq:gamma}
\end{equation}
Then, for every $0<\lambda\le1$,
\begin{equation}
\sup_{P\in\Pcal}\E^P\int_0^T
|A^\lambda(X_{t-}^P,\mu_{t-}^P,\alpha_t)
-A^0(X_{t-}^P,\mu_{t-}^P,\alpha_t)|^2dt
\le C\lambda^\gamma.
\label{eq:Arate}
\end{equation}
In particular, the left-hand side of \eqref{eq:Yconv} is bounded by $C\lambda^\gamma$.
\end{proposition}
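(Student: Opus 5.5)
The proof reuses the pointwise decomposition from the proof of Proposition~\ref{prop:noncontactJ}, now with the occupation bound \eqref{eq:occ} in place of the qualitative modulus $\omega(r)$, and then optimizes over the layer width $r$. Fix $P$, $0<\lambda\le1$, and $0<r\le1$. Set
\[
G_{\lambda,r}=\{\rho(X_{t-}^P,\alpha_t)>r,\ \lambda|A^0_t|\le r/2\}.
\]
On $G_{\lambda,r}$, the displacement identity \eqref{eq:Yosida-displacement} and the domination \eqref{eq:Aconv} keep $J_\lambda+h(\alpha_t)$ in the same translated open orthant as $X_{t-}^P+h(\alpha_t)$. The face Lipschitz bound \eqref{eq:mobile-face-lip} then gives $|D_\lambda|\le\alpha_A\lambda|A^0|$. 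On the complement, we use $|D_\lambda|\le2|A^0|$. This yields \eqref{eq:noncontact-detail} with $\omega(r)\le C_{\rm occ}r^\theta$:
\[
\sup_P\E^P\int_0^T|D_{\lambda,t}^P|^2dt\le C\bigl\{\lambda^2+r^{\theta\delta/(2+\delta)}+(\lambda/r)^\delta\bigr\}.
\]
The three constants come from the $2+\delta$ moment \eqref{eq:occmoment}, which also bounds the second moment through H\"older's inequality on the finite measure space $dt\otimes P$. The H\"older step on $\{\rho\le r\}$ and the Markov-type tail bound $|v|^2\one_{\{|v|>M\}}\le M^{-\delta}|v|^{2+\delta}$ with $M=r/(2\lambda)$ are exactly as before.

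Next I choose $r$ to balance the last two terms. Set $r=\lambda^{\kappa}$ with $\kappa\in(0,1]$, so that $r\le1$ and $\lambda/r\le1$. The exponents are $\theta\kappa\delta/(2+\delta)$ and $\delta(1-\kappa)$. Equating them gives $\kappa\theta=(2+\delta)(1-\kappa)$, so $\kappa=(2+\delta)/(2+\delta+\theta)\in(0,1)$. The common exponent is
\[
\delta(1-\kappa)=\frac{\theta\delta}{2+\delta+\theta}.
\]
Since $\lambda\le1$, the sum $\lambda^2+2\lambda^{\theta\delta/(2+\delta+\theta)}$ is at most $3\lambda^{\gamma}$ with $\gamma$ as in \eqref{eq:gamma}. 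This proves \eqref{eq:Arate}, with $C$ depending only on $C_{\rm occ}$, the moment bound, $\alpha_A$, $T$ and $\delta,\theta$, and uniform in $P$.

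For the final claim, I apply the chain in the proof of Theorem~\ref{thm:Yconv}, that is, Lemma~\ref{lem:Bstab} together with the Cauchy--Schwarz bound for $\mathsf K$. The regularized and selected systems share $X^P$, $\mu^P$, $\alpha$ and the terminal value, so the left-hand side of \eqref{eq:Yconv} is bounded by $C(1+T)|\varrho|^2\sup_P\E^P\int_0^T|D_{\lambda,t}^P|^2dt\le C\lambda^\gamma$.

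The main point to check is that the affine-on-orthant step is valid for the ALA prototype, which Proposition~\ref{prop:noncontactJ} already covers. I also need $A^\lambda(x)\in A(J_\lambda)$ to coincide with the single-valued affine branch at $J_\lambda$. This holds because $J_\lambda+h(a)$ lies in the open orthant, where $\partial\phi_{\rm ALA}$ is a singleton. The rest is bookkeeping of exponents.
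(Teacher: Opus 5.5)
Your proposal is correct and follows the paper's proof essentially line by line. It uses the same good set $G_{\lambda,r}$, the same orthant-preservation and affine-branch bound $|D_\lambda|\le\alpha_A\lambda|A^0|$, the same H\"older and tail estimates yielding $\lambda^2+r^{\theta\delta/(2+\delta)}+(\lambda/r)^\delta$, the same choice $r=\lambda^{(2+\delta)/(2+\delta+\theta)}$, and the same transfer to $(Y,Z,U,\mathsf K)$ via Lemma~\ref{lem:Bstab}; your explicit derivation of the balancing exponent is the only (harmless) addition.
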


\begin{proof}
Fix $P,t$ and set
\[
x=X_{t-}^P,\quad \mu=\mu_{t-}^P,\quad a=\alpha_t,
\quad A_0=A^0(x,\mu,a),\quad D_\lambda=A^\lambda(x,\mu,a)-A_0.
\]
For $0<r\le1$, set
\[
G_{\lambda,r}=\{\rho(x,a)>r,\ \lambda|A_0|\le r/2\}.
\]
On $G_{\lambda,r}$, \eqref{eq:Yosida-displacement} and $|A^\lambda|\le|A_0|$ give
\[
|J_\lambda-x|\le\lambda|A_0|\le r/2.
\]
The two points $x+h(a)$ and $J_\lambda+h(a)$ remain in the same translated orthant. On that orthant the ALA operator is single-valued and its state-dependent part is $\alpha_A$-Lipschitz. Since $A^\lambda(x,\mu,a)=A(J_\lambda,\mu,a)$,
\begin{equation}
|D_\lambda|\le\alpha_A|J_\lambda-x|
\le\alpha_A\lambda|A_0|
\quad\text{on }G_{\lambda,r}.
\label{eq:Yrate-good}
\end{equation}
On the complement,
\begin{equation}
|D_\lambda|\le|A^\lambda|+|A_0|\le2|A_0|.
\label{eq:Yrate-bad}
\end{equation}
Integrating \eqref{eq:Yrate-good}--\eqref{eq:Yrate-bad} yields
\begin{align*}
\E^P\int_0^T|D_{\lambda,t}|^2dt
\le{}&C\lambda^2\E^P\int_0^T|A_t^0|^2dt
+C\E^P\int_0^T|A_t^0|^2\one_{G_{\lambda,r}^c}dt.
\end{align*}
The complement satisfies
\[
G_{\lambda,r}^c
\subset E_r\cup F_{\lambda,r},
\quad
E_r=\{\rho(X_{t-}^P,\alpha_t)\le r\},
\quad
F_{\lambda,r}=\{|A_t^0|>r/(2\lambda)\}.
\]
For $E_r$, H\"older's inequality gives
\begin{align*}
\E^P\int_0^T|A_t^0|^2\one_{E_r}dt
&\le\left(\E^P\int_0^T|A_t^0|^{2+\delta}dt\right)^{2/(2+\delta)}
\left((dt\otimes P)(E_r)\right)^{\delta/(2+\delta)}\\
&\le C r^{\theta\delta/(2+\delta)}.
\end{align*}
For $F_{\lambda,r}$,
\[
|A_t^0|^2\one_{F_{\lambda,r}}
\le\left(\frac{2\lambda}{r}\right)^\delta|A_t^0|^{2+\delta},
\]
so
\[
\E^P\int_0^T|A_t^0|^2\one_{F_{\lambda,r}}dt
\le C(\lambda/r)^\delta.
\]
Taking the supremum over $P$ gives
\begin{equation}
\sup_P\E^P\int_0^T|D_{\lambda,t}^P|^2dt
\le C\left\{\lambda^2+r^{\theta\delta/(2+\delta)}+(\lambda/r)^\delta\right\}.
\label{eq:Yrate-balance}
\end{equation}
Choose
\[
r=\lambda^{(2+\delta)/(2+\delta+\theta)}.
\]
Since $0<\lambda\le1$, we have $0<r\le1$. A direct calculation gives
\begin{align*}
r^{\theta\delta/(2+\delta)}
&=\lambda^{\theta\delta/(2+\delta+\theta)},\\
(\lambda/r)^\delta
&=\lambda^{\theta\delta/(2+\delta+\theta)}.
\end{align*}
Hence
\[
\sup_P\E^P\int_0^T|D_{\lambda,t}^P|^2dt
\le C\left\{\lambda^2+
\lambda^{\theta\delta/(2+\delta+\theta)}\right\}.
\]
For $0<\lambda\le1$, this is bounded by $C\lambda^\gamma$, with
\[
\gamma=\min\left\{2,\frac{\theta\delta}{2+\delta+\theta}\right\}.
\]
This proves \eqref{eq:Arate}. Finally, the proof of Theorem~\ref{thm:Yconv} gives quantitatively
\begin{align*}
&\sup_P\E^P\Bigl[\sup_{t\le T}|Y_t^{\lambda,P}-Y_t^P|^2
+\sup_{t\le T}|\mathsf K_t^{\lambda,P}-\mathsf K_t^P|^2\\
&\hspace{14mm}+\int_0^T|Z_t^{\lambda,P}-Z_t^P|_{a_t^P}^2dt
+\int_0^T\|U_t^{\lambda,P}-U_t^P\|_\Pi^2dt\Bigr]\\
&\qquad\le C\sup_P\E^P\int_0^T|D_{\lambda,t}^P|^2dt
\le C\lambda^\gamma.
\end{align*}
The same exponent controls the error of the regularized backward system.
\end{proof}

\section{Conclusion}

We studied a one-way coupled forward--backward McKean--Vlasov system with jumps in which the source generated by the maximal monotone operator
\[
A(x,\mu,a)=\partial\phi(x+h(a))+c(\mu)
\]
is evaluated along the forward state and accumulated through
\[
\mathsf K_t=\int_0^t\langle\varrho,A^0(X_{s-},\mu_{s-},\alpha_s)\rangle ds.
\]
Well-posedness and strong stability provide the basic analytical framework. The singular part of the analysis concerns the Yosida limit when the minimal-norm selection changes branch across an interface shifted by the control.

We establish strong Yosida convergence, Minty-type strong--weak closedness with variable law and interface, and a quantitative rate under an occupation condition. If the occupation of a layer of width $r$ is of order $r^\theta$ and $A^0$ has a $(2+\delta)$th moment, then
\begin{align*}
&\sup_P\E^P\int_0^T
|A^\lambda(X_{t-}^P,\mu_{t-}^P,\alpha_t)
-A^0(X_{t-}^P,\mu_{t-}^P,\alpha_t)|^2dt\\
&\hspace{35mm}\le C\lambda^\gamma,
\qquad
\gamma=\min\left\{2,\frac{\theta\delta}{2+\delta+\theta}\right\}.
\end{align*}
The same exponent controls $(Y^\lambda,Z^\lambda,U^\lambda,\mathsf K^\lambda)-(Y,Z,U,\mathsf K)$. For the ALA prototype, occupation is linear in the layer thickness when the continuous martingale part is nondegenerate in the normal directions and the interface moves absolutely continuously.

The framework remains modelwise and uses a common jump compensator. The explicit occupation estimate relies on a nondegenerate continuous martingale component, so the purely discontinuous regime requires a different mechanism. The system is also one-way coupled rather than fully coupled in both directions.

Natural extensions include occupation mechanisms for purely discontinuous dynamics, moving interfaces of lower regularity, genuinely fully coupled forward--backward systems, and quasi-sure formulations under nondominated families with varying jump characteristics. Particle approximations, propagation of chaos, and maximum principles are further directions and are not treated here.

\section*{Declarations}

\paragraph{Conflict of interest}
The authors declare that they have no conflict of interest.

\paragraph{Funding}
This research received no specific grant from funding agencies in the public, commercial, or not-for-profit sectors.

\paragraph{Declaration of generative AI and AI-assisted technologies in the manuscript preparation process}
During the preparation of this work, the authors used ChatGPT (OpenAI) to assist with English translation, language editing, and the organization and presentation of the manuscript. The authors subsequently reviewed and edited the content, verified the mathematical arguments and references, and take full responsibility for the content of the article.

\paragraph{Data availability}
This is a theoretical study and does not rely on any data set.

\end{document}